\setlist[enumerate]{
     itemsep  = 0.2cm,
       label  = {\upshape (\arabic*)},
         ref  = \arabic*,
    leftmargin  = *
} 
\numberwithin{equation}{section} 
\newtheorem{corollary}[equation]{Corollary} 
\newtheorem{lemma}[equation]{Lemma}
\newtheorem{question}[equation]{Question}
\newtheorem{theorem}[equation]{Theorem}
\theoremstyle{definition} 
\newtheorem{remark}[equation]{Remark}
\newtheorem{conjecture}{Conjecture}
\newcommand{\subgrp}{\leq}
\newcommand{\cp}{C_p\rtimes C_{(p-1)/2}}
\newcommand{\PSL}{\operatorname{PSL}}
\newcommand{\Sz}{\operatorname{Sz}}
\newcommand{\Sol}{\operatorname{Sol}}
\newcommand{\Solv}{\operatorname{Solv}}
\title{The Number of Solvabilizers in Finite Groups}
\subjclass[2010]{ 20D06, 20D10, 20D60, 20F19}
\keywords{simple group, solvable group, solvabilizer, $n$-solvabilizer group.}
\author[B.~Akbari]{Banafsheh~Akbari}
\address{Department of Mathematics\\
Tufts University\\
Medford, MA 02155,
USA}
\email{akbari@banafsheh.net}
\author[E.~Han]{Ethan~Han}
\address{Department of Mathematics \\ University of California, Berkeley \\ Berkeley, CA 94720, USA}
\email{ethan\_han@berkeley.edu}
\author[S.~Lin]{Sasha~Lin}
\address{Department of Mathematics \\University of California, Irvine\\ Irvine, CA 92697, USA}
\email{linaa1@uci.edu}
\author[B.~Vakil]{Benjamin~Vakil}
\address{Department of Mathematics \\ University of Connecticut \\ Storrs, CT 06269, USA}
\email{benjamin.vakil@uconn.edu}
\date{}
\begin{document}

\begin{abstract}
Considering a finite group $G$, for any element $x\in G$, the solvabilizer of $x$ in $G$ is defined as $\Sol_G(x)=\{y \in G : \langle x, y \rangle \text{ is solvable}\}$. In this paper, we introduce $\Solv(G)$ as the number of distinct solvabilizers of elements in $G$. A group is called $n$-solvabilizer if $|\Solv(G)|=n$. 
We compute $|\Solv(G)|$ for various classes of non-abelian simple groups, including $\PSL(2, 2^n)$; $\PSL(2, 3^n)$ with an odd integer $n$; and $\PSL(2, p)$ with a prime $p>7$. Furthermore, we show that for any nonsolvable group $G$, $|\Solv(G)|\geq 32$. Finally, we implement an algorithm in GAP for calculating $|\Solv(G)|$ for any nonsolvable group $G$. This algorithm can be adapted for all questions generalizing to nilpotent and other subgroup-closed classes of finite groups.
\end{abstract}

\maketitle

\section{Introduction
}
For any element $x$ in a finite group $G$, we define the solvabilizer of $x$ in $G$ as 
$$\Sol_G(x)=\{y \in G : \langle x, y \rangle \text{ is solvable}\}.$$

This set has been studied extensively in \cite{AkbariLewis, tables, Carmine}.
The solvabilizers in a group play analogous roles to the centralizers where the centralizer of an element $x\in G$, $C_{G}(x)$, can be thought of as the elements $y\in G$ where $\langle x,y\rangle$ is \textit{abelian}. This brings up the problem of computing the number of distinct solvabilizers of elements which is analogous to a previously studied problem on determining the number of distinct centralizers in a group \cite{Belcastro}. This can be considered as a new exploration of the combinatorial properties of groups. 

Given group $G$, we denote $$\Solv(G)=\{\Sol_G(x) : x\in G\}.$$ The group $G$ is called $n$-solvablizer if $|\Solv(G)|=n$. Due to a known result by Thompson \cite{Thompson}, a group $G$ is solvable if and only if for any two elements $x$ and $y$ in $G$, the subgroup $\langle x, y \rangle$ is solvable. As a result,  $G$ is $1$-solvablizer if
and only if $G$ is solvable. For the alternating group $A_5$ as the smallest nonsolvable group, we find that $|\Solv(A_5)|=32$. We will prove that for any nonsolvable group $G$, $|\Solv(G)|\geq 32$. 
\\ \\
In Section \ref{minimalsimplegroups}, we compute $|\Solv(G)|$ for any minimal simple group $G$, which is a non-abelian simple group whose proper subgroups are solvable. They are completely classified by Thompson \cite{Thompson}. Using the list of numbers we collect as the number of solvabilizers for the minimal simple groups, we will show that any two minimal simple groups have different number of solvabilizers. This raises the following question.
\begin{question}
Let $G$ and $H$ be two $n$-solvablizer finite simple groups. Is $G$ isomorphic to $H$?
\end{question}
Those numbers giving the positive answer to this question provide a new way to characterize all finite groups that are $n$-solvablizer. 

In Section \ref{othersimplegroups}, we generalize the results obtained in \cite{tables} presenting the structure of solvabilizers of elements in the minimal simple groups and provide the whole structure of solvabilizers in the projective special linear group of type $\PSL(2, 2^n)$; $\PSL(2, 3^n)$ for any odd integer $n$; and $\PSL(2, p)$ for any prime $p$. As a result we show that our formulas for $|\Solv(G)|$ found for the minimal simple groups $G$ in Section \ref{minimalsimplegroups} can be generalized for all these projective special linear groups.
\\ \\
In Section \ref{nonsolvablesection}, we discuss the lower bounds on $|\Solv(G)|$ for all nonsolvable groups $G$. We also conjecture that any $32$-solvablizer group has a composition factor isomorphic to $A_5$.
\\ \\
Finally, in Section \ref{GAPCalculation}, we implement an algorithm to efficiently calculate $|\Solv(G)|$ for any group $G$. The improvement over the naive algorithm of iterating over every element is achieved by instead iterating over rational classes of elements in $G$ where the rational class of an element $x$ indicates the union of all the conjugacy classes of $x^{i}$ for an element $x\in G$ where $\gcd(|x|,i)=1$.  Computations were performed in GAP, and the code can be found at the link in that section.

\section{Preliminary results}

We will demonstrate several lemmas concerning the number of solvablizers in groups. As we mentioned above, a group $G$ is solvable if and only if, for any two elements $x, y\in G$  the subgroup
$\langle x, y\rangle$ is solvable. This means that a group $G$ is solvable if and only if $|\Solv(G)|=1$. It is natural to ask if there exists a nonsolvable group $G$ with $|\Solv(G)|=2$. Generally, the following question arises.

\begin{question}
What natural numbers can occur as $|\Solv(G)|$?
\end{question}
To that end, we can state the following lemma. Which we prove later:
\begin{lemma}
    The number of distinct solvabilizers in $A_5$ is $32$.
\end{lemma}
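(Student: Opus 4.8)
The plan is to exploit the fact that $A_5$ is a minimal simple group: every proper subgroup of $A_5$ is solvable, since the smallest nonsolvable group has order $60$. Hence for $x,y \in A_5$ the subgroup $\langle x, y\rangle$ is solvable if and only if it is proper, which (as every proper subgroup lies in a maximal one) happens exactly when $x$ and $y$ lie in a common maximal subgroup. Thus for every $x \neq e$ we have $\Sol_{A_5}(x) = \bigcup_{x \in M} M$, the union taken over the maximal subgroups $M$ containing $x$, while $\Sol_{A_5}(e) = A_5$. First I would record the maximal subgroups of $A_5$: five copies of $A_4$ (the point stabilizers), six copies of $D_{10}$ (the normalizers of the Sylow $5$-subgroups), and ten copies of $S_3$ (the normalizers of the Sylow $3$-subgroups). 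Everything then reduces to understanding, for each $x$, which of these contain $x$.

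Next I would organize the computation by the order of $x$. An element of order $5$ lies in a unique maximal subgroup, the $D_{10}$ normalizing $\langle x\rangle$, so its solvabilizer is that $D_{10}$. An element of order $3$ lies in exactly two point stabilizers $A_4$ (those fixing its two fixed points) and one $S_3$, so its solvabilizer is the union of these three subgroups. An involution lies in exactly one $A_4$, two copies of $D_{10}$, and two copies of $S_3$, so its solvabilizer is the union of these five subgroups. Since elements generating the same cyclic subgroup share a solvabilizer, the $24$ elements of order $5$ collapse onto the $6$ Sylow $5$-subgroups and the $20$ elements of order $3$ onto the $10$ Sylow $3$-subgroups, leaving the $15$ involutions and the identity. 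This produces at most $1 + 15 + 10 + 6 = 32$ distinct solvabilizers, and the remaining task is to rule out further coincidences.

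To separate solvabilizers coming from elements of different orders, I would compute their cardinalities by sorting the contents of each union by element order; a short count gives $|\Sol_{A_5}(e)| = 60$, and $|\Sol_{A_5}(x)|$ equal to $36$, $24$, $10$ for $x$ of order $2$, $3$, $5$ respectively. As these four numbers are distinct, no solvabilizer of one order equals one of another. Within the order-$5$ family the solvabilizers are the six subgroups $N_{A_5}(\langle x\rangle) \cong D_{10}$, pairwise distinct because distinct Sylow $5$-subgroups have distinct normalizers. Within the order-$3$ family I would observe that $\Sol_{A_5}(x)$ omits precisely the $3$-cycles whose support contains both fixed points of $x$; this recovers the unordered pair of fixed points, hence $\langle x\rangle$, so the ten solvabilizers are distinct.

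The main obstacle is the distinctness of the fifteen involution solvabilizers, which I would settle by recovering $x$ directly from the set $\Sol_{A_5}(x)$. The unique $A_4$ contained in $\Sol_{A_5}(x)$ is the point stabilizer fixing the fixed point $f$ of $x$ (it is the only point stabilizer all of whose $3$-cycles lie in $\Sol_{A_5}(x)$), which recovers $f$; and the $3$-cycles in $\Sol_{A_5}(x)$ whose support contains $f$ have exactly the two supports $\{a,b,f\}$ and $\{c,d,f\}$, where $\{a,b\}$ and $\{c,d\}$ are the two transposition pairs of $x$. These data determine $x=(a\,b)(c\,d)$ completely, so the fifteen involutions yield fifteen distinct solvabilizers. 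Combining the four families gives exactly $1 + 15 + 10 + 6 = 32$ distinct solvabilizers, as claimed.
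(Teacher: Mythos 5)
Your proof is correct, and it reaches the result by a genuinely more self-contained route than the paper does. The paper never proves this lemma in isolation: it is the case $q=4$ of Theorem \ref{solvpsl(2,2^p)} (via Lemma \ref{maximalpsl(2,2^p)}), since $A_5\cong \PSL(2,4)$ and $2q^2=32$ at $q=4$. Your count decomposes exactly as the paper's does --- $1$ (identity) $+\,15$ (involutions, all with distinct solvabilizers) $+\,10$ (copies of $C_3$, i.e.\ $C_{q-1}$) $+\,6$ (copies of $C_5$, i.e.\ $C_{q+1}$) --- and under the isomorphism your maximal subgroups $A_4$, $S_3$, $D_{10}$ are the paper's $C_2^2\rtimes C_3$, $D_{2(q-1)}$, $D_{2(q+1)}$, with your incidence counts (one $A_4$, two $D_{10}$'s, two $S_3$'s per involution, etc.) matching Table \ref{tbl2,2p} at $q=4$. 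Where you differ is in the distinctness arguments: the paper, working with general $q=2^p$, shows two involutions with equal solvabilizers lie in the same copies of $D_{2(q+1)}$ because each element of order dividing $q+1$ lies in a unique such dihedral subgroup, and then uses that two copies of $D_{2(q+1)}$ share at most one involution; you instead reconstruct $x$ from $\Sol_{A_5}(x)$ concretely through the permutation representation --- the unique point stabilizer all of whose $3$-cycles lie in the solvabilizer recovers the fixed point $f$, and the supports of the $3$-cycles through $f$ recover the two transposition pairs --- and similarly recover $\langle x\rangle$ for $|x|=3$ from the omitted $3$-cycles (those whose support contains both fixed points), a step I checked and which is sound because $\Sol_{A_5}(x)$ is \emph{exactly} the union of the maximal subgroups containing $x$. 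Your approach buys an elementary, table-free proof requiring no input from \cite{tables} or \cite{King}, at the cost of being special to $A_5$; the paper's argument is heavier but yields the formula $|\Solv(\PSL(2,2^p))|=2q^2$ for all primes $p$ at once (and, in Section \ref{othersimplegroups}, for all $\PSL(2,2^n)$). One small economy you could note: your size computation ($60$, $36$, $24$, $10$) separating the four families is not strictly needed, since your reconstruction arguments already pin down the cyclic subgroup (or the involution itself) within each family, and solvabilizers of elements of different orders are automatically distinguished by the element orders they contain.
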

Moreover, any group of order less than or equal to 59 is $1$-solvabilizer. However, the question of whether there exists any nonsolvable group $G$ with $|\Solv(G)|\leq 31$, still remains. To answer this question, we apply the following theorem in \cite{BarryWard}.
\begin{theorem}{\rm \cite[Theorem 1]{BarryWard}}\label{groupcontainingminimalsimplegroups}
    Any non-abelian simple group $G$ contains a subgroup which is a minimal simple group.
\end{theorem}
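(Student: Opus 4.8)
The statement is quoted from \cite{BarryWard}, so the plan below only sketches the shape of an argument. The idea is to locate the minimal simple group as the top quotient of a small non-solvable subgroup, and then to promote it from a section to a genuine subgroup.

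First I would pass to a convenient witness for non-solvability. Let $H \leq G$ be a non-solvable subgroup of least possible order; such an $H$ exists because $G$ itself is non-solvable. By minimality of $|H|$, every proper subgroup of $H$ is solvable; call such an $H$ a \emph{minimal non-solvable} group. The problem then reduces to understanding these $H$, since if $H$ happens to be simple it is by definition a minimal simple group and we are done.

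Next I would pin down the structure of a minimal non-solvable $H$. Let $R$ be its solvable radical; as $H$ is non-solvable, $R \neq H$, so $R$ lies in some maximal normal subgroup $M$. The quotient $H/M$ is simple, and it cannot be abelian, for otherwise $H$ would be an extension of the solvable group $M$ by a cyclic group of prime order and hence solvable. Since $M$ is a proper (hence solvable) subgroup, $M \le R$, so in fact $M = R$ and $H/R$ is non-abelian simple. Every proper subgroup of $H/R$ has the form $K/R$ with $K < H$ proper and therefore solvable, so $H/R$ is a \emph{minimal} simple group, lying on Thompson's short list \cite{Thompson} ($\PSL(2,2^p)$, $\PSL(2,3^p)$, $\PSL(2,p)$, $\Sz(2^p)$, $\PSL(3,3)$). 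A short Frattini argument gives more: if a maximal subgroup $U$ of $H$ failed to contain $R$ then $UR = H$ would force $U/(U \cap R) \cong H/R$ to be non-abelian simple, contradicting minimality; hence $R \le \Phi(H)$, and since $\Phi(H)$ is always nilpotent (so solvable) we get $R = \Phi(H)$. Thus a minimal non-solvable group is a nilpotent Frattini extension of a minimal simple group.

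The main obstacle is precisely the case $R \neq 1$, exemplified by $H \cong \mathrm{SL}(2,5)$, whose only minimal simple section is $A_5$ but which contains no minimal simple subgroup whatsoever. Because $R = \Phi(H)$ is a Frattini subgroup it admits no proper supplement, so the minimal simple group $H/R$ genuinely does not embed into $H$ and is visible only as a quotient. To get around this I would exploit that the \emph{ambient} group $G$ is simple, not merely the internal structure of $H$: fixing a prime $p \mid |R|$ and a minimal normal $p$-subgroup $V$ of $H$ inside $R$, I would study the $p$-local subgroups $N_G(V)$ and $C_G(V)$, and combine the very restricted list of possible quotients $H/R$ together with their covering groups (Schur multipliers) with this local information to exhibit an honest copy of some minimal simple group inside a local subgroup of $G$. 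Making this section-to-subgroup promotion uniform across Thompson's list is the crux of the argument, and is where the real work in \cite{BarryWard} lies; the reductions in the first two steps are comparatively formal.
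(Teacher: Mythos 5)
The paper contains no proof of this statement at all---it is quoted directly from Barry and Ward---so your proposal has to stand on its own, and as written it does not. Your first two steps are correct and standard: a non-solvable subgroup $H$ of least order has all proper subgroups solvable; the quotient $H/R$ by the solvable radical is non-abelian simple with all proper subgroups solvable, hence on Thompson's list; and your Frattini argument showing $R=\Phi(H)$ is sound. But this reduction leads exactly to the dead end your own $\mathrm{SL}(2,5)$ example exposes: a minimal non-solvable subgroup of $G$ exhibits a minimal simple group only as a \emph{section}, and you supply no mechanism---only the stated intention to study $N_G(V)$ and $C_G(V)$ and to invoke Schur multipliers---for converting that section into a genuine subgroup of $G$. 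The last paragraph of your proposal is a research plan, not an argument; nothing in it rules out, say, that every local subgroup of $G$ containing $H$ is itself a Frattini or central extension in which the minimal simple quotient again fails to split off. So there is a genuine gap, and it sits precisely at the step you yourself flag as the crux.

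Your guess about where the real work in Barry--Ward lies is also inaccurate, which matters if you intend to fill the gap by following them. Their proof does not pass through minimal non-solvable subgroups or any section-to-subgroup promotion at all: it depends on the classification of finite simple groups, running through the alternating, Lie-type, and sporadic cases and directly exhibiting, inside each simple group, a subgroup isomorphic to one of Thompson's minimal simple groups (for instance $A_5\leq A_n$ for $n\geq 5$, and suitable subgroups such as $\PSL(2,q)$ subfield-type subgroups in the Lie-type families, with the sporadic groups checked against known subgroup data). So the honest options are: replace your steps with a CFSG case analysis in the style of Barry--Ward, or state explicitly that your third step is open---to my knowledge no classification-free proof of this theorem is known, and your reduction, while correct, does not bring one closer.
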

The following result along with Theorem \ref{groupcontainingminimalsimplegroups} asserts that the number of distinct solvabilizers in minimal simple groups are the smallest ones among all non-abelian simple groups. 
\begin{lemma}\label{sovH<SolvG}
    Let $G$ be a group and $H$ a subgroup of $G$. Then we have $|\Solv(H)| \leq |\Solv(G)|$.
\end{lemma}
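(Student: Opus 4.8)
The plan is to reduce everything to the single observation that solvability of a two-generator subgroup is an intrinsic property, independent of the ambient group. Concretely, for $x \in H$ the subgroup $\langle x, y\rangle$ is solvable when regarded inside $H$ exactly when it is solvable when regarded inside $G$, so that
$\Sol_H(x) = \Sol_G(x) \cap H$
for every $x \in H$. I would record this identity first, since it is essentially the only group-theoretic input the argument needs; everything after it is a counting argument.

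A tempting first attempt is to define a map $\Solv(H) \to \Solv(G)$ by $\Sol_H(x) \mapsto \Sol_G(x)$ and argue it is injective, but this is not even well-defined: two elements $x, x' \in H$ can satisfy $\Sol_H(x) = \Sol_H(x')$ while $\Sol_G(x) \neq \Sol_G(x')$, because passing to $G$ allows new elements $y$ with $\langle x, y\rangle$ solvable. Instead I would run the argument in the opposite direction. Set $\mathcal{A} = \{\Sol_G(x) : x \in H\}$, which is a subset of $\Solv(G)$, and consider the map $\rho \colon \mathcal{A} \to \Solv(H)$ sending $T \mapsto T \cap H$. By the identity above, $\rho(\Sol_G(x)) = \Sol_H(x)$; and since the value $T \cap H$ depends only on $T$, the map $\rho$ is well-defined regardless of which $x$ represents a given $T$. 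It is surjective because every element $\Sol_H(x)$ of $\Solv(H)$ is the image under $\rho$ of $\Sol_G(x) \in \mathcal{A}$.

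The conclusion is then pure counting: a surjection $\rho \colon \mathcal{A} \to \Solv(H)$ forces $|\Solv(H)| \leq |\mathcal{A}|$, while the inclusion $\mathcal{A} \subseteq \Solv(G)$ gives $|\mathcal{A}| \leq |\Solv(G)|$, and chaining these yields $|\Solv(H)| \leq |\Solv(G)|$. I expect the only genuine subtlety — and the step I would be most careful to state correctly — to be the \emph{direction} of the map: one must send (a subset of) $\Solv(G)$ onto $\Solv(H)$ by intersecting with $H$, rather than attempting a direct injection $\Solv(H) \hookrightarrow \Solv(G)$, which fails to be well-defined for the reason noted above.
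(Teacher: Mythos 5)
Your proof is correct and rests on exactly the same key input as the paper's, namely the identity $\Sol_H(x) = \Sol_G(x) \cap H$: the paper shows that $\Sol_H(x) \neq \Sol_H(y)$ forces $\Sol_G(x) \neq \Sol_G(y)$ and concludes by pigeonhole, which is precisely the contrapositive of the well-definedness of your surjection $\rho \colon T \mapsto T \cap H$, so the two arguments are the same counting run in dual directions. Your side remark that the naive forward assignment $\Sol_H(x) \mapsto \Sol_G(x)$ need not be well-defined is accurate (e.g.\ for solvable $H \leq A_5$ all $H$-solvabilizers coincide while the $G$-solvabilizers do not), and the paper sidesteps this by working with elements and representatives rather than defining a map on solvabilizer classes.
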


\begin{proof}
    Suppose $x, y \in H$ with $\Sol_H(x) \neq \Sol_H(y)$.  Then, without loss of generality, there exists $z \in H$ such that $\langle x, z \rangle$ is solvable but $\langle y, z\rangle$ is not solvable. Considering the fact that $\Sol_H(x)=\Sol_G(x)\cap H$, we can see that $z\in \Sol_G(x)$ and $z\notin \Sol_G(y)$ which implies that $\Sol_G(x) \neq \Sol_G(y)$. If we had that $|\Solv(H)|>|\Solv(G)|$, we would necessarily have $x,y\in H$ such that $\Sol_{H}(x)\neq\Sol_{H}(y)$ but $\Sol_{G}(x)=\Sol_{G}(y)$, which is impossible.
\end{proof}

Theorem \ref{groupcontainingminimalsimplegroups} and Lemma \ref{sovH<SolvG} draw our attention to focus primarily on finding $|\Solv(G)|$ for the minimal simple groups. Once
we compute it in Section \ref{minimalsimplegroups}, we will notice that $|\Solv(A_5)|=32$ is the smallest number appearing in our list as the number of distinct solvabilizers in a minimal simple group. So after applying Theorem \ref{groupcontainingminimalsimplegroups} and Lemma \ref{sovH<SolvG}, it is seen that $|\Solv(G)|\geq 32$ for any non-abelian simple group $G$. We will prove further that $|\Solv(G)|\geq 32$ for any nonsolvable group $G$.
\\ \\
In the sequel, we present a result enabling us to reduce the problem on finding $|\Solv(G)|$ to the Fitting-free groups, examples of which include direct and wreath products of almost simple groups. To prove it, we apply the following lemma in \cite{AkbariLewis}. It is worth mentioning that $\frac{\Sol_{G}(x)}{R(G)}$ indicates the set $\{yR(G) \mid  y \in \Sol_{G}(x)\}$, where 
$R(G)$ is the solvable radical of $G$.
\begin{lemma}{\rm (\cite[Lemma 2.5]{AkbariLewis})}\label{SolG/R(G)}
    Let $G$ be a nonsolvable group and $x$ be an element in $G$. Then $\Sol_{G/R(G)}(xR(G))=\frac{\Sol_{G}(x)}{R(G)}$.
\end{lemma}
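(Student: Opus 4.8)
The plan is to work with the canonical projection $\pi\colon G \to G/R(G)$ and to compare the two sides as subsets of $G/R(G)$, writing $\bar g = \pi(g) = gR(G)$. By the convention recalled just above the statement, the right-hand side $\frac{\Sol_{G}(x)}{R(G)}$ is precisely the image $\pi(\Sol_G(x))$, so the goal is to prove $\pi(\Sol_G(x)) = \Sol_{G/R(G)}(\bar x)$. The key structural input is that $R(G)$ is \emph{solvable}, together with the standard facts that quotients of solvable groups are solvable and that an extension of a solvable group by a solvable group is again solvable.

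For the inclusion $\pi(\Sol_G(x)) \subseteq \Sol_{G/R(G)}(\bar x)$, I would take $y \in \Sol_G(x)$, so that $\langle x, y\rangle$ is solvable. Since $\pi$ is a homomorphism, $\langle \bar x, \bar y\rangle = \pi(\langle x, y\rangle)$ is a homomorphic image of $\langle x, y\rangle$, hence solvable, and therefore $\bar y \in \Sol_{G/R(G)}(\bar x)$. This direction uses nothing about $R(G)$ beyond the fact that it is normal.

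For the reverse inclusion I would take any $\bar z \in \Sol_{G/R(G)}(\bar x)$ and choose a preimage $z \in G$. Then $\langle \bar x, \bar z\rangle = \pi(\langle x, z\rangle) \cong \langle x, z\rangle / (\langle x, z\rangle \cap R(G))$ is solvable by hypothesis. Since $\langle x, z\rangle \cap R(G)$ is a subgroup of the solvable group $R(G)$, it is solvable; and as $\langle x, z\rangle$ is an extension of this solvable normal subgroup by the solvable quotient $\langle x, z\rangle/(\langle x, z\rangle \cap R(G))$, it follows that $\langle x, z\rangle$ is itself solvable. Hence $z \in \Sol_G(x)$ and $\bar z = \pi(z) \in \pi(\Sol_G(x))$, which establishes the claim and hence the equality.

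The argument is essentially one of bookkeeping, and I do not anticipate a genuine obstacle; the only point requiring care is the reverse inclusion, where solvability of $\langle \bar x, \bar z\rangle$ in the quotient does not by itself yield solvability of $\langle x, z\rangle$. It is precisely the solvability of the radical $R(G)$ (not merely its normality) that allows one to lift solvability back across the projection through the extension argument, and this is the single place where the defining property of $R(G)$ is genuinely invoked.
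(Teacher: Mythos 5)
Your proof is correct, and it is the standard argument: the forward inclusion via homomorphic images of solvable groups, and the reverse via the isomorphism $\langle xR(G), zR(G)\rangle \cong \langle x,z\rangle/(\langle x,z\rangle \cap R(G))$ together with the fact that an extension of a solvable group by a solvable group is solvable. The paper itself does not reprove this lemma (it cites \cite[Lemma 2.5]{AkbariLewis}), but your argument is exactly the one the paper deploys in its proof of the subsequent lemma on $|\Solv(G)| = |\Solv(G/R(G))|$, so you have taken essentially the same approach.
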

\begin{lemma}\label{SolvG/R(G)}
 Let $G$ be finite group with $R(G)\neq 1$. Then   $|\Solv(G)| = |\Solv(G/R(G))|$.
\end{lemma}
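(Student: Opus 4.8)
The plan is to exhibit an explicit bijection between $\Solv(G)$ and $\Solv(G/R(G))$ induced by the canonical projection $\pi\colon G \to G/R(G)$. Writing $\bar{x} = \pi(x) = xR(G)$, Lemma \ref{SolG/R(G)} identifies the solvabilizer of $\bar x$ in the quotient with the image of the solvabilizer of $x$ in $G$, namely $\Sol_{G/R(G)}(\bar x) = \frac{\Sol_G(x)}{R(G)} = \pi(\Sol_G(x))$. This suggests the assignment $\Phi\colon \Solv(G) \to \Solv(G/R(G))$ sending $\Sol_G(x)$ to $\pi(\Sol_G(x))$, and the whole problem reduces to proving that $\Phi$ is a well-defined bijection. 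I would first dispose of the case where $G$ is solvable, in which $R(G) = G$, so $G/R(G)$ is trivial and both sides equal $1$; for the remainder assume $G$ is nonsolvable so that Lemma \ref{SolG/R(G)} is available.

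The crucial structural input is that each solvabilizer $\Sol_G(x)$ is a \emph{union of cosets of} $R(G)$, equivalently $\Sol_G(x) = \pi^{-1}(\pi(\Sol_G(x)))$. To see this I would take $z \in \Sol_G(x)$ and $r \in R(G)$ and show $zr \in \Sol_G(x)$. Since $R(G)$ is normal in $G$, the set $\langle x, z\rangle R(G)$ is a subgroup containing $\langle x, zr\rangle$; it is solvable because it is an extension of the solvable normal subgroup $R(G)$ by the quotient $\langle x, z\rangle R(G)/R(G)$, which is itself a homomorphic image of the solvable group $\langle x, z\rangle$. A subgroup of a solvable group is solvable, so $\langle x, zr\rangle$ is solvable and $zr \in \Sol_G(x)$, as required. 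This is the step I expect to be the main obstacle, since it is where the normality and solvability of $R(G)$ must be combined; once it is in place the rest is formal.

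With the union-of-cosets property established, the bijectivity of $\Phi$ follows quickly. Well-definedness and surjectivity are immediate from Lemma \ref{SolG/R(G)}: if $\Sol_G(x) = \Sol_G(y)$ then their images under $\pi$ coincide, and every solvabilizer of $G/R(G)$ arises as $\Sol_{G/R(G)}(\bar g) = \pi(\Sol_G(g))$ for some $g$. For injectivity I would use that the union-of-cosets property lets me recover each solvabilizer from its image, namely $\Sol_G(x) = \pi^{-1}(\pi(\Sol_G(x)))$; hence if $\pi(\Sol_G(x)) = \pi(\Sol_G(y))$ then $\Sol_G(x) = \Sol_G(y)$. Therefore $\Phi$ is a bijection and $|\Solv(G)| = |\Solv(G/R(G))|$.
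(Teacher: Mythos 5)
Your proposal is correct and is essentially the paper's argument run in the opposite direction: the paper exhibits the inverse bijection $\Sol_{G/R(G)}(xR(G)) \mapsto \Sol_G(x)$, using the same Lemma \ref{SolG/R(G)} for well-definedness/surjectivity, and your saturation step ($\Sol_G(x) = \pi^{-1}(\pi(\Sol_G(x)))$, via the extension $\langle x,z\rangle R(G)$) is the same solvability-transfer computation the paper carries out through the isomorphism $\langle xR(G), zR(G)\rangle \cong \langle x,z\rangle/\bigl(\langle x,z\rangle \cap R(G)\bigr)$. Your explicit disposal of the solvable case $R(G)=G$ is a minor tidiness the paper leaves implicit, but it does not change the substance.
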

\begin{proof} Consider the following map from $\Solv(G/R(G))$ to $\Solv(G)$:
$$\Sol_{G/R(G)}(xR(G)) \mapsto \Sol_G(x)$$

We must show that it is a bijection. Considering Lemma \ref{SolG/R(G)}, it is evident that the map is well-defined. In what follows, we prove that it is injective.  Suppose $\Sol_{G/R(G)}(xR(G)) \neq \Sol_{G/R(G)}(yR(G))$.  Then without loss of generality, for some $zR(G)$, $\langle xR(G), zR(G)\rangle$ is solvable but $\langle yR(G), zR(G)\rangle$ is not. 
Then it can be easily seen that
\begin{equation*}
    \langle xR(G), zR(G)\rangle\cong \frac{\langle x, z \rangle}{\langle y, z \rangle \cap R(G)}.
\end{equation*}
This implies that $\langle x, z\rangle$ is solvable. By a similar argument, $\langle y, z \rangle$ is not solvable. Thus $\Sol_{G}(x) \neq \Sol_{G}(y)$ and so this map is injective.  It is clear to see that it is also surjective which completes the proof.
\end{proof}


\section{The number of solvabilizers in minimal simple groups}\label{minimalsimplegroups}
In this section, we provide a complete classification for the size of $\Solv(G)$ where $G$ is a minimal simple group.
\\ \\
Thompson \cite[Corollary 1]{Thompson} classified all minimal simple groups. Based on his classification, every minimal simple group is isomorphic to one of the following groups: $\PSL(2, 2^p)$ where $p$ is any prime; 
$\PSL(2, 3^p)$ where $p$ is an odd prime; $\PSL(2, p)$ where $p > 3$ is a prime satisfying $p \equiv 2,3 \pmod 5$; ${\rm Sz}(2^p)$ where $p$ is an odd prime; and $\PSL(3, 3)$.
\\ \\
It was stated in \cite{AkbariLewis} that the solvabilizer of an element $x$ in $G$ is $$\Sol_{G}(x)=\bigcup_{x\in H\subgrp G} H,$$ where the union ranges over all solvable subgroups $M$ of $G$ containing $x$. It is not difficult to see that this definition can be improved as the union ranges over all maximal solvable subgroups of $G$ containing $x$.
Given a minimal simple group $G$, Since all proper subgroups of $G$ are solvable, the solvabizer of an element $x\in G$ can be simply taken as the union ranges over all maximal subgroups of $G$ containing $x$.  
\\ \\
The paper \cites{tables} characterizes the solvabilizers of elements together with their sizes in all minimal simple groups where they are represented as the union of some maximal subgroups up to isomorphism (see also Tables \ref{tbl2,2p}--\ref{tbl2,p,23} in the Appendix).  However, it cannot exactly describe how many elements in $G$
produce the same solvabilizers.  So we need to delve in the structure of solvabilizers to find such elements.
\\ \\
Considering the definition above for the solvabilizers of a minimal simple group $G$, we show that given two elements 
$x$ and $y$, if $\Sol_{G}(x)=\Sol_{G}(y)$, then they are exactly contained in the same maximal subgroups of $G$. This also raises an interesting question that we do not fully address in this paper:
\begin{question}\label{Sol(x)=Sol(y) implies same subgroups}
    For what groups $G$ does $\Sol_{G}(x)=\Sol_{G}(y)$ imply that $x$ and $y$ are contained in the same maximal solvable subgroups? 
\end{question}

Clearly, this question has a particular application to calculating $|\Solv(G)|$.
\\ \\
The proof of this for minimal simple groups requires us to apply the results for the  solvabilizers in minimal simple groups found in \cite{tables}. For convenience, we include them in some tables in the Appendix.
\\ \\
While proving the next theorems, we will observe that any involution, an element of order $2$, in a minimal simple group $G$ produces a unique solvabilizer, so, $|\Solv(G)|\geq |I(G)|$ where $I(G)$ denotes the set of all involutions. 
\begin{theorem}
    Let $G$ be a minimal simple group other that $\PSL(3, 3)$.  If the elements $x$ and $y$ in $G$ satisfy $\Sol_G(x) = \Sol_G(y)$, then $x$ and $y$ are contained in exactly the same maximal subgroups.
\end{theorem}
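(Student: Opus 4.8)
The plan is to reduce the statement to a covering argument about maximal subgroups. Because $G$ is minimal simple, every proper subgroup is solvable, so as recorded above $\Sol_G(x)=\bigcup_{x\in M} M$, the union taken over the maximal subgroups $M$ containing $x$. Writing $\mathcal M(x)$ for the set of maximal subgroups containing $x$, the implication ``$\mathcal M(x)=\mathcal M(y)\Rightarrow \Sol_G(x)=\Sol_G(y)$'' is immediate, so the content lies entirely in the reverse implication $\Sol_G(x)=\Sol_G(y)\Rightarrow \mathcal M(x)=\mathcal M(y)$. By symmetry it suffices to prove $\mathcal M(x)\subseteq\mathcal M(y)$. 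I would fix $M\in\mathcal M(x)$; then $M\subseteq \Sol_G(x)=\Sol_G(y)=\bigcup_{M'\in\mathcal M(y)}M'$, and intersecting with $M$ gives
\[
M=\bigcup_{M'\in\mathcal M(y)}\bigl(M\cap M'\bigr).
\]
If some $M'\in\mathcal M(y)$ equals $M$, then $y\in M$ and we are done; otherwise this exhibits $M$ as a union of the \emph{proper} subgroups $M\cap M'$, and the goal becomes to show, by counting, that this is impossible.

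The counting rests on the explicit classification of the maximal subgroups in each family of minimal simple groups, together with the solvabilizer tables of \cite{tables}. For $\PSL(2,q)$ (covering the families $q=2^p$, $q=3^p$, and $q=p$) I would use Dickson's list of maximal subgroups and the partition of $G$ into its Sylow $p$-subgroups and its two conjugacy families of cyclic maximal tori, of orders $(q-1)/d$ and $(q+1)/d$ with $d=\gcd(2,q-1)$; these cyclic subgroups form trivial-intersection sets, so each nonidentity element lies in a unique one of them, and $\mathcal M(x)$ is thereby forced to be small and explicit (a single Borel for a unipotent element, two Borels and one dihedral normalizer for a split semisimple element, one dihedral normalizer for a non-split one, with at most an exceptional $A_4$ or $S_4$ adjoined in the $\PSL(2,p)$ case). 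The analogous input for the Suzuki groups $\Sz(2^p)$ is Suzuki's determination of their maximal subgroups and the corresponding partition into tori and Sylow $2$-subgroups. From these descriptions I extract a uniform small bound on $|\mathcal M(y)|$ and a bound showing each pairwise intersection $M\cap M'$ of distinct maximal subgroups is a proper subgroup of small order; since $\bigcup_{M'\neq M}(M\cap M')$ then has at most $|\mathcal M(y)|\cdot\max_{M'}|M\cap M'|$ elements, this count falls short of $|M|$, so $M$ cannot be covered and some $M'$ must equal $M$. A preliminary reduction streamlines the bookkeeping: since the tables assign visibly different orders and structures to the solvabilizers of unipotent, split, and non-split elements, the hypothesis $\Sol_G(x)=\Sol_G(y)$ already forces $x$ and $y$ to be of the same type.

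The main obstacle, and the reason the statement excludes $\PSL(3,3)$, is the behaviour of elements of small order---above all the involutions---where the ``intersections are small'' estimate is weakest, because distinct maximal subgroups can share involutions and other low-order elements. Here I would handle involutions separately, invoking the fact noted above that in a minimal simple group each involution produces its own solvabilizer (so $\Sol_G(x)=\Sol_G(y)$ with $x,y$ involutions already forces $x=y$, hence trivially $\mathcal M(x)=\mathcal M(y)$), and I would read off the precise solvabilizer sizes and structures for the remaining small-order classes from the tables in the Appendix to verify the covering inequality case by case. The exceptional group $\PSL(3,3)$ is set aside precisely because its maximal subgroups overlap too heavily for this uniform covering inequality to hold, so it must be treated by separate direct computation.
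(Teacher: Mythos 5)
Your overall frame---reduce to showing $\mathcal M(x)\subseteq\mathcal M(y)$, use the table data to force $x$ and $y$ to be of the same type, and then run a covering count $M=\bigcup_{M'\in\mathcal M(y)}(M\cap M')$---is sound as far as it goes, and for elements of order dividing $q-1$ it is essentially what the paper does (the paper shows $K_2\cup K_4\cup H_2$ covers at most $3(q-2)<q(q-2)$ of the relevant elements of $K_1$). But your uniform engine, the inequality $|M|>|\mathcal M(y)|\cdot\max_{M'}|M\cap M'|$, is false in exactly the cases that make this theorem hard, namely when $M$ is one of the maximal subgroups of \emph{bounded} order. Concretely, in $\PSL(2,3^p)$ an element $x$ of order $3$ lies in one copy of $C_3^p\rtimes C_{(q-1)/2}$ and in $q/3$ copies of $A_4$; taking $M$ to be one of those copies of $A_4$, you have $|M|=12$ fixed, while $|\mathcal M(y)|\approx q/3$ and distinct maximal subgroups can meet $M$ in subgroups of order $3$ or $4$, so your right-hand side grows linearly in $q$ and no contradiction results: \emph{a priori} the $A_4$'s through $y$ could cover all eight order-$3$ elements of $M$ without any of them equaling $M$. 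The same failure occurs for the $A_4$/$S_4$ columns in $\PSL(2,p)$ and for involutions, which lie in linearly many maximal subgroups, some of bounded order. This is precisely where the paper has to do real work: for order-$3$ elements in $\PSL(2,3^p)$ it picks $g$ with $x^g=y$, shows $g$ normalizes the unique copy $K$ of $C_3^p\rtimes C_{(q-1)/2}$ through both, hence $g\in N_G(K)=K$ with $|g|\mid\frac{q-1}{2}$, and then derives a contradiction from the permutation $\sigma_g$ that $g$ induces on the $q$ copies of $C_3$ (other than $\langle x\rangle$) sitting in the $A_4$'s: $\sigma_g$ decomposes into $|g|$-cycles, yet $\gcd(|g|,q)=1$. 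No counting bound of your proposed form substitutes for that normalizer-and-permutation argument.

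There is a second, independent gap: your treatment of involutions is circular. You propose to ``invoke the fact noted above that each involution produces its own solvabilizer,'' but in the paper that fact is not an input---it is proved \emph{inside} each of the four lemmas, as part of this very theorem, by a membership-transfer trick your proposal never articulates: take $z$ of order dividing $q+1$ in a dihedral maximal subgroup $H$ through $x$; since $z$ lies in a \emph{unique} such dihedral subgroup and $z\in\Sol_G(x)=\Sol_G(y)$, necessarily $y\in H$; running over all such $H$ and intersecting (the intersection of two copies of $D_{2(q+1)}$ contains at most one involution) forces $x=y$. This same trick, rather than covering estimates, is also what drives the order-$4$ case in $\Sz(q)$ and much of the $\PSL(2,p)$ analysis, where the conclusion reached is of the form $y\in\{x,x^{-1}\}$ (whence trivially the same maximal subgroups), not $\mathcal M(x)\subseteq\mathcal M(y)$ directly. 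Finally, a minor point: the paper excludes $\PSL(3,3)$ simply because it is handled by direct GAP computation ($|\Solv(\PSL(3,3))|=1562$), not for the structural reason you conjecture.
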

    To prove the theorem above, we proceed case by case for the four classes of minimal simple groups.
    \begin{lemma}\label{maximalpsl(2,2^p)}
    Let $x$ and $y$ be two elements in $G=\PSL(2, q)$ with $q=2^p$ and $p$ as an odd prime, satisfying $\Sol_G(x) = \Sol_G(y)$. Then $x$ and $y$ are contained in exactly the same maximal subgroups.
    \end{lemma}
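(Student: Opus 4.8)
The plan is to prove something slightly cleaner and stronger than the stated conclusion, namely that the solvabilizer of an element \emph{recovers} the family $\mathcal{M}(x)$ of maximal subgroups containing it, in the precise sense that
\[
\mathcal{M}(x) = \{\, M \le G \text{ maximal} : M \subseteq \Sol_G(x)\,\}.
\]
Once this identity is in hand the lemma is immediate: if $\Sol_G(x) = \Sol_G(y)$ then the two right-hand sides coincide, so $\mathcal{M}(x) = \mathcal{M}(y)$. The inclusion $\subseteq$ is free, because (as recalled in the excerpt) for a minimal simple group $\Sol_G(x)$ is exactly the union of the maximal subgroups containing $x$, so each $M \in \mathcal{M}(x)$ lies inside $\Sol_G(x)$. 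The entire content is therefore the reverse inclusion: a maximal subgroup $M$ with $x \notin M$ is never contained in $\Sol_G(x)$. Throughout I would use Dickson's description of the maximal subgroups of $G = \PSL(2,q)$ with $q = 2^p$: because $p$ is prime the only proper subfield subgroup is $\PSL(2,2)\cong S_3$, which sits inside a dihedral subgroup and is not maximal, so the maximal subgroups fall into exactly three families, the Borel subgroups $B \cong E_q \rtimes C_{q-1}$ of order $q(q-1)$, the dihedral groups $D_{2(q-1)}$, and the dihedral groups $D_{2(q+1)}$.

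First I would stratify the non-identity elements by order, using that $q-1$ and $q+1$ are odd: each is either an involution (order $2$, unipotent), a split element (order dividing $q-1$, lying in a unique split torus $T$), or a nonsplit element (order dividing $q+1$, lying in a unique nonsplit torus $S$). For each type I read off $\mathcal{M}(x)$ from the structure recorded in \cite{tables} (reproduced in the Appendix): a nonsplit element lies only in $N_G(S) = D_{2(q+1)}$; a split element lies in the two Borels over $T$ together with $N_G(T) = D_{2(q-1)}$; and an involution lies in a single Borel $B = N_G(P)$, where $P$ is its unique Sylow $2$-subgroup, together with a collection of dihedral subgroups of both kinds in which it occurs as a reflection. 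Since the three types yield solvabilizers of markedly different orders, the hypothesis $\Sol_G(x) = \Sol_G(y)$ already forces $x$ and $y$ to share a type, which I would record as a preliminary reduction.

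For the nonsplit and split types the reverse inclusion is a short order-and-intersection computation. In the nonsplit case $\Sol_G(x)$ is itself the subgroup $D_{2(q+1)}$, and no maximal subgroup other than $D_{2(q+1)}$ can fit inside a group of order $2(q+1)$. In the split case $\Sol_G(x) = B_1 \cup B_2 \cup D_{2(q-1)}$, and I would show that the only Borels contained in this union are $B_1,B_2$: a third Borel meets each of the three pieces in a torus of order at most $q-1$, so its intersection with the union has size at most $4(q-1) < q(q-1)$ (valid since $q \ge 8$), forcing it out of the union. No extraneous dihedral can be contained either, since its nontrivial rotations have order coprime to the orders occurring in $B_1 \cup B_2 \cup D_{2(q-1)}$. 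Thus $\mathcal{M}(x)$ is recovered in both cases.

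The main obstacle is the involution case, where $\Sol_G(x)$ contains a full Borel of order $\sim q^2$ together with on the order of $q$ dihedral subgroups, so the crude cardinality bound no longer excludes a second Borel from being covered. The resolution is to argue by generation rather than by size. Although \emph{every} involution of $G$ lies in $\Sol_G(x)$ — two involutions always generate a dihedral, hence solvable, subgroup — the semisimple elements of a Borel $B'' \not\ni x$ cannot all lie in $\Sol_G(x)$: if $t \in B''$ is semisimple and $\langle x,t\rangle$ is proper, then $x$ and $t$ share a maximal subgroup, yet $x$ lies in only boundedly many maximal subgroups while $B''$ contains $\sim q^2$ semisimple elements, so for some such $t$ one has $\langle x,t\rangle = G$ and therefore $B'' \not\subseteq \Sol_G(x)$. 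The same mechanism excludes every dihedral subgroup that does not already contain $x$. This is the step where I expect to lean most heavily on the explicit solvabilizer data of \cite{tables} to certify which pairs $(x,t)$ generate $G$. With all extraneous maximal subgroups excluded in each type, the displayed identity for $\mathcal{M}(x)$ holds, and the lemma follows.
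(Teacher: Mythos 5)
Your plan hinges on the identity $\mathcal{M}(x)=\{M \text{ maximal}: M\subseteq \Sol_G(x)\}$, and this identity is \emph{false} in the involution case, not merely hard to prove. Let $x$ be an involution with (unique) Borel $B\cong C_2^p\rtimes C_{q-1}$, and let $T'$ be any of the $q$ cyclic subgroups of order $q-1$ inside $B$. Then $D':=N_G(T')\cong D_{2(q-1)}$ satisfies $D'\subseteq \Sol_G(x)$ although $x\notin D'$: the rotations of $D'$ lie in $B$, hence in $\Sol_G(x)$; the reflections of $D'$ are involutions, and \emph{every} involution $u$ of $G$ lies in $\Sol_G(x)$ because $\langle x,u\rangle$ is dihedral, hence solvable; finally $x\notin D'$ since the reflections of $D'$ swap the two fixed points of $T'$ on the projective line whereas $x$, being unipotent, fixes the point stabilized by $B$ (and $x\notin T'$ as $q-1$ is odd). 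So $\Sol_G(x)$ contains $q$ extraneous copies of $D_{2(q-1)}$, and your sentence ``the same mechanism excludes every dihedral subgroup that does not already contain $x$'' cannot be repaired. The counting mechanism you propose for excluding a second Borel $B''$ is also numerically insufficient as stated: $x$ lies in $q+1$ maximal subgroups, each of which can meet $B''$ in up to $q-2$ semisimple elements, and $(q+1)(q-2)\geq q(q-2)$, so no contradiction follows from cardinality alone. (Borel exclusion is nevertheless true, but needs a fixed-point argument: only the torus of $B''$ whose second fixed point is the fixed point of $x$, and possibly the torus inverted by $x$, can lie entirely in $\Sol_G(x)$.) A smaller issue of the same kind infects your split case: the coprimality remark only rules out extraneous copies of $D_{2(q+1)}$; a copy of $D_{2(q-1)}$ whose rotation subgroup lies in $B_1$ has \emph{all} its rotations inside the union, and one must instead count its reflections (exactly one of which lies in $B_1\cup B_2\cup D_{2(q-1)}$).

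The paper avoids your global identity precisely because of this phenomenon: it recovers from $\Sol_G(x)$ only \emph{type-restricted} membership data. For an involution $x$, it uses that every nontrivial element $z$ of order dividing $q+1$ lies in a \emph{unique} maximal subgroup, a copy of $D_{2(q+1)}$; hence $\Sol_G(x)=\Sol_G(y)$ forces $y$ to lie in every one of the $q/2$ copies of $D_{2(q+1)}$ containing $x$, and since two distinct such dihedrals share at most one involution, $x=y$ outright (a stronger conclusion than membership in the same maximals). For split elements it compares the two unions directly by counting involutions and elements of order dividing $q-1$, in the spirit of your $4(q-1)<q(q-1)$ estimate, which is fine. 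Your nonsplit case is correct and matches the paper. The salvage for your scheme is therefore to restrict the right-hand side of your identity to the $D_{2(q+1)}$-type maximal subgroups contained in $\Sol_G(x)$ --- those \emph{are} exactly the ones containing $x$ --- rather than all maximal subgroups; as written, the involution case of your argument fails.
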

    \begin{proof}
    We consider Table \ref{tbl2,2p} in the Appendix where it lists the number of maximal subgroups containing a certain element in $G$ (see also \cites[Theorem 3.1]{tables}). Assuming $Sol_G(x) = Sol_G(y)$, it is seen that the non-identity elements $x$ and $y$ are both involutions, both of order dividing $q-1$, or both of order dividing $q+1$. In the case where $|x|$ and $|y|$ divide $q+1$, the solvabilizer is a single copy of $D_{2(q+1)}$. Therefore, they are obviously contained in the same maximal subgroup. In the case where $|x|$ and $|y|$ divide $q-1$, we let $x$ be contained in $K_1$ and $K_2$ isomorphic to $C_2^p \rtimes C_{q-1}$, and $H_1$ isomorphic to $D_{2(q-1)}$. Additionally, $y$ is contained in $K_3$ and $K_4$ isomorphic to $C_2^p \rtimes C_{q-1}$ and $H_2$ isomorphic to $D_{2(q-1)}$. Then $$\Sol_G(x)=K_1\cup K_2\cup H_1=K_3\cup K_4\cup H_2=\Sol_G(y).$$ 
    Assume first that $K_i$'s are distinct. Clearly, the intersection of any two of them can not contain an involution. So all involutions in $K_1\cup K_2$ 
must be covered by $H_2$ where there are $2(q-1)$ involutions in $K_1\cup K_2$. This is impossible because  $H_2$ only contains $q-1$ involution. So without loss of generality we assume $K_2 = K_3$. 
\\ \\
Next, suppose $K_1 \neq K_4$. In what proceeds, we will show that $K_2\cup K_4\cup H_2$ cannot cover all nonidentity elements of $K_1$ of order dividing $q-1$, whose number is $q(q-1)-q = q(q - 2)$. Note that $K_1 \cap K_2$ and $K_1 \cap K_4$ are isomorphic to a subgroup of $C_{q-1}$, and $H_2 \cong D_{2(q-1)}$ has 
$q - 2$ nonidentity elements of order dividing $q-1$. Thus $K_2\cup K_4\cup H_2$ can only cover at most $3(q - 2)$ elements of order dividing $q-1$ in $K_1$, since the intersection of any of these three subgroups with $K_1$ is at most a copy of $C_{q-1}$.  This is less than $q(q - 2)$ for all $q=2^p$, a contradiction. Therefore $K_1 = K_4$ and so $x$ and $y$ are contained in the same copies of $C_2^p \rtimes C_{q-1}$. It follows that $x$ and $y$ must be contained in the same copy of $C_{q-1} \cong K_1\cap K_2$.  So they are contained in exactly the same maximal subgroups.
    \\ \\
    Lastly, consider when $x$ and $y$ are involutions. 
    From  Table \ref{tbl2,2p}, $x$ belongs to one copy of $C_2^p \rtimes C_{q-1}$, $q/2$ copies of $D_{2(q+1)}$, and $q/2$ copies of $D_{2(q-1)}$.
    Let $H$ be a copy of $D_{2(q+1)}$ containing $x$. Take an element $z \in H$ with order dividing $q+1$. Since $z\in \Sol_G(x)$, we have $z\in \Sol_G(y)$ which implies that $z$ is in some copy of $D_{2(q+1)}$ containing $y$. As $H$ is a unique copy of $D_{2(q+1)}$ containing $z$,  we have $y\in H$.
    This works for any of the $q/2$ copies of $D_{2(q+1)}$ containing $x$. Hence each group of this type also contains $y$. However, since the intersection of any two $D_{2(q+1)}$'s contains at most one involution, we have $x=y$.  This also certifies that there are $q^2-1$ solvabilizers of involutions, since there are $q^2 - 1$ involutions each with a distinct solvabilizer..
    \end{proof}
    \begin{lemma}\label{maximalpsl(2,3^p)}
    Let $x$ and $y$ be two elements in $G=\PSL(2, q)$ whit $q=3^p$ and $p$ as an odd prime, satisfying $\Sol_G(x) = \Sol_G(y)$. Then $x$ and $y$ are contained in exactly the same maximal subgroups.
    \end{lemma}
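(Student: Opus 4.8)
The plan is to follow the strategy of Lemma~\ref{maximalpsl(2,2^p)}, replacing the data of Table~\ref{tbl2,2p} by the corresponding data for $G=\PSL(2,q)$, $q=3^p$, recorded in the Appendix. The maximal subgroups of $G$ now come in four shapes: the Borel subgroups $C_3^p\rtimes C_{(q-1)/2}$, the dihedral groups $D_{q-1}$ and $D_{q+1}$, and the subfield subgroups $A_4\cong\PSL(2,3)$. Note that $S_4\not\leq G$ and $A_5\not\leq G$, since $3^p\equiv 3\pmod 8$ for odd $p$ and $G$ is minimal simple; consequently each Klein four-group $V$ lies in a \emph{unique} $A_4=N_G(V)$. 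Because $q\equiv 3\pmod 4$, the split torus $C_{(q-1)/2}$ has odd order coprime to $3$ (so its nontrivial elements have order $\geq 5$) and the non-split torus $C_{(q+1)/2}$ has even order, whence the involutions of $G$ are non-split. Reading the isomorphism type of $\Sol_G(x)$ off the table, the hypothesis $\Sol_G(x)=\Sol_G(y)$ forces $x$ and $y$ to be of the same kind: both non-split of order $>2$, both of order dividing $(q-1)/2$, both involutions, or both unipotent of order $3$. The decisive new feature relative to $\PSL(2,2^p)$ is that involutions and unipotents are now distinct classes, and that the $A_4$'s meet both of these.

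Two of the four cases are straightforward. If $x,y$ are non-split of order $>2$, their order is prime to $3$ and exceeds $3$, so they lie in no $A_4$ and each lies in a single copy of $D_{q+1}$; hence $\Sol_G(x)$ is that one dihedral group and $x,y$ share it. If $x,y$ have order dividing $(q-1)/2$, then each has order $\geq 5$ and so lies in no $A_4$; the subfield subgroups are therefore irrelevant, and the counting argument of Lemma~\ref{maximalpsl(2,2^p)} transfers verbatim, with $C_3^p\rtimes C_{(q-1)/2}$ in place of $C_2^p\rtimes C_{q-1}$ and $D_{q-1}$ in place of $D_{2(q-1)}$, showing that $x$ and $y$ lie in the same two Borels and the same copy of $D_{q-1}$, hence in exactly the same maximal subgroups.

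For involutions I would argue as in the last paragraph of Lemma~\ref{maximalpsl(2,2^p)}. An involution lies in several copies of $D_{q+1}$ (its centralizer being one of them). Fix one such copy $H$ containing $x$, and let $z$ generate the cyclic subgroup $C_{(q+1)/2}$ of $H$. Then $z$ is non-split of order $(q+1)/2>3$, so $z$ lies in no $A_4$, in no $D_{q-1}$, and in the unique $D_{q+1}$, namely $H$. Since $z\in\Sol_G(x)=\Sol_G(y)$, the subgroup $\langle y,z\rangle$ is solvable and so lies in a maximal subgroup containing $z$, which can only be $H$; thus $y\in H$. Applying this to every $D_{q+1}$ through $x$ places $y$ in all of them, and since two distinct copies of $D_{q+1}$ share at most one involution, $x=y$, making the conclusion trivial.

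The unipotent case is where I expect the real work. A unipotent element lies in a unique Borel, so write $x\in B_x$ and $y\in B_y$; then $\Sol_G(x)=B_x\cup\bigcup_{A_4\ni x}A_4$ and similarly for $y$. A naive count of unipotents fails here, because each unipotent lies in a number of copies of $A_4$ growing linearly in $q$ (roughly $q/3$ per class), so the $A_4$'s through $y$ could in principle cover all $q-1$ nontrivial unipotents of $B_x$. Instead I would use the \emph{semisimple} elements of $B_x$: every nontrivial such element $s$ has order $\geq 5$, hence lies in no $A_4$, while $\Sol_G(y)$ contains only the one Borel $B_y$ and no $D_{q-1}$; thus $s\in\Sol_G(y)$ forces $s\in B_y$. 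As the nontrivial semisimple elements of $B_x$ generate $B_x$, this gives $B_x\leq B_y$, i.e.\ $B_x=B_y=:B$. The remaining and genuinely new difficulty is to promote ``same Borel'' to ``same $A_4$'s''. Writing the $A_4$'s through $x$ as the groups $N_G(V)$ for the Klein four-groups $V$ normalized by $x$ — each being the unique $A_4$ over $V$ — the common solvabilizer determines, through its involutions (since $B$ has none), only the union of these $V$'s and not the individual family. Pinning down the family, and hence showing that $x$ and $y$ normalize exactly the same Klein four-groups, is the step I expect to be the main obstacle; I would attack it using the precise incidence counts between the $A_4$'s, the involutions, and the common Borel $B$ recorded in the table, a bookkeeping that has no analogue in the characteristic-$2$ setting.
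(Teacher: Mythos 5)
Your reduction to four cases and your treatment of three of them (involutions, non-split elements of order greater than $2$, and elements of order dividing $q-1$) are correct and essentially the paper's own arguments for Lemma~\ref{maximalpsl(2,3^p)}; your derivation of $B_x=B_y$ in the order-$3$ case via the semisimple elements is also sound, and marginally cleaner than the paper's version, which counts the $\frac{q(q-3)}{2}$ elements of order dividing $q-1$ in a Borel against the bound $|B_x\cap B_y|\leq \frac{q-1}{2}$. The genuine gap is the one you flag yourself: the order-$3$ case is the entire substance of this lemma beyond the $\PSL(2,2^p)$ template, and you stop exactly where it begins, offering only the intention to ``attack it using incidence counts.'' As written, the lemma is unproved, since nothing in your text rules out two order-$3$ elements $x,y$ of the common Borel with $\langle x\rangle\neq\langle y\rangle$ whose families of $A_4$'s have equal unions. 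Moreover, incidence bookkeeping alone is unlikely to close this: the $q/3$ copies of $A_4$ through $x$ contain $q$ further copies of $C_3$ together with a large supply of involutions, and (by the same phenomenon you observed for naive unipotent counting) set-equality of such unions is too coarse to pin down the individual family.

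The paper closes this case by a rigidity argument of a different flavor, and this is the missing idea. Suppose $\langle x\rangle\neq\langle y\rangle$; since all copies of $C_3$ are conjugate, choose $g$ with $x^g=y$, replacing $y$ by $y^2$ if necessary. Equality of solvabilizers and uniqueness of the Borel $K$ through $x$ and $y$ give $K^g=K$, hence $g\in N_G(K)=K$; and $g$ cannot lie in the (abelian, normal) Sylow $3$-subgroup of $K$, since then it would commute with $x$ and give $y=x$, so $|g|$ divides $\frac{q-1}{2}$. The $A_4$'s through $x$ contain exactly $q$ copies $J_1,\dots,J_q$ of $C_3$ distinct from $\langle x\rangle$, and $\Sol_G(x)=\Sol_G(y)=\Sol_G(x)^g$ forces conjugation by $g$ to permute $\{J_1,\dots,J_q\}$. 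The key computation is that $N_G(J_i)$ is a $3$-group of order $q$ (there are $\frac{q^2-1}{2}$ conjugate copies of $C_3$), so if $J_i^{g^n}=J_i$ then $g^n$ lies in a $3$-group while having order prime to $3$, whence $g^n=1$ and $|g|$ divides $n$; thus every cycle of the induced permutation has length exactly $|g|$, so $|g|$ divides $q$. But $|g|$ divides $\frac{q-1}{2}$, which is coprime to $q$, forcing $g=1$ and $y=x$, a contradiction. This coprimality trick --- playing the $3$-group $N_G(C_3)$ against a conjugator of order dividing $\frac{q-1}{2}$ --- has no analogue in your proposed counting, and without it (or a substitute) your proof of the lemma is incomplete.
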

    \begin{proof}
    According to the size of the solvabilizers of elements in $G$ collected in Table \ref{tbl2,3p} in the Appendix, the nonidentity elements $x$ and $y$ are both involutions, both order $3$, both of order dividing $q-1$ or both of order dividing $q+1$ (see also \cites[Theorem 3.2]{tables}).
    \\ \\
    Let us first suppose that $x$ and $y$ are involutions. It is seen from Table \ref{tbl2,3p} that there are exactly $\frac{q+3}{2}$ copies of $D_{q+1}$ containing $x$. Given a copy of $D_{q+1}$ containing $x$, we take some non-involution $z \in D_{q+1}$ with order dividing $q+1$. Note that $z\in \Sol_G(x)$ and assuming $\Sol_G(x) = \Sol_G(y)$, we find that $z$ is contained in a copy of $D_{q+1}$ containing $y$. However, there is only one copy of $D_{q+1}$ containing $z$. Therefore, $x$ and $y$ must be in this copy of $D_{q+1}$. This holds for every copy of $D_{q+1}$ containing $x$, so $x$ and $y$ are contained in the intersection of these $D_{q+1}$'s, which is a copy of $C_2$. Therefore $x=y$.
    \\ \\
    Now let $x$ and $y$ be elements of order dividing $q+1$. Considering Table $2$, there is a unique copy of $D_{q+1}$ where $\Sol_G(x) = \Sol_G(y)=D_{q+1}$. Consequently, we also have that $x$ and $y$ are contained in the same maximal subgroup.
    \\  \\
    We now suppose that $x$ and $y$ are elements of order dividing $q-1$. By a similar argument to Lemma \ref{maximalpsl(2,2^p)}, we can show that $x$ and $y$ are in the same two copies of $C_3^p \rtimes C_{(q-1)/2}$ and one copy of $D_{q-1}$.
    \\ \\
    Finally, we consider when $x$ and $y$ have order $3$.  We wish to show that $y=x$ or $y=x^2$. For sake of contradiction, suppose that this is not true, so $\langle x\rangle \neq \langle y \rangle$.  We find from the table that $x$ (respectively $y$) is contained in one copy of $C_3^p \rtimes C_{(q-1)/2}$ and $q/3$ copies of $A_4$. Note that the number of elements of order dividing $q-1$ in a copy of $C_3^p \rtimes C_{(q-1)/2}$ is
    $\frac{q(q-1)}{2} - q = \frac{q(q-3)}{2}$. On the other hand, the intersection of any two subgroups of type $C_3^p \rtimes C_{(q-1)/2}$ is at most a copy of  $C_{(q-1)/2}$, because any element of order $3$ is contained in exactly one 
    $C_3^p \rtimes C_{(q-1)/2}$. Therefore, the two copies of $C_3^p \rtimes C_{(q-1)/2}$ must be the same, in order for the solvabilizers to contain the same elements of order dividing $q-1$.
    \\ \\
    We now show that the copies of $A_4$ containing $x$ and $y$ are the same.
    Note that all copies of $C_3$ are conjugate in $G$,
    so there exists some $g \in G$ such that $g\langle x\rangle g^{-1} = \langle y \rangle$.  We can assume that $gxg^{-1} = y$ since otherwise, $gxg^{-1} = y^2$ and thus we can replace $y$ with $y^2$ to give the same argument. 
    \\ \\
    As we demonstrated above, there is only one copy of $C_3^p \rtimes C_{(q-1)/2}$ containing $x$. We take it as $K$. Then we have $$K^g\subseteq \Sol_G(x)^g=\Sol_G(x^g)=\Sol_G(y).$$ Since $K$ is the only copy of  $C_3^p \rtimes C_{(q-1)/2}$ containing $x$ and $y$, $K = gKg^{-1}$. This implies that $g \in N_G(K) = K$. On the one hand, $K$ contains a unique abelian Sylow $3$-subgroup. we see that $g$ cannot be contained in this Sylow $3$-subgroup since otherwise,  $y=gxg^{-1}= xgg^{-1}$ which is impossible. Therefore, $g$ must be an element of $K$ with order dividing $\frac{q-1}{2}$.
    \\ \\
    According to \cites[Theorem 2.1]{BlythRobinson}, there are $\frac{q^2-1}{2}$ copies of $C_3$ where they are all conjugate. Thus given a copy of $C_3$, we get  $$|G:N_G(C_3)|=\frac{q(q^2-1)/2}{|N_G(C_3)|}=\frac{q^2-1}{2},$$ which yields $|N_G(C_3)|=q$. This means that $N_G(C_3)$ is a unique Sylow $3$-subgroup of order $q$ contained in the unique copy of $C_3^p \rtimes C_{(q-1)/2}$ containing $C_3$.
    \\ \\    
    Let $H_1, H_2, \ldots, H_{q/3}$ be the copies of $A_4$ containing $x$.  Each $H_i$ contains four subgroups isomorphic to $C_3$. One of these subgroups is $\langle x\rangle$,  which is contained in $K$, while the other three are not, as the intersection of $K$ and $H_i$ is exactly $\langle x \rangle$. On the other hand, given two distinct copies $H_i$ and $H_j$ of $A_4$, since $H_i \cap H_j = \langle x \rangle$, all subgroups $H_1, H_2, \ldots, H_{q/3}$ have 
    $3 \cdot (q/3) = q$ distinct copies of $C_3$ not equal to $\langle x \rangle$ not contained in $K$. Denote these copies of $C_3$ by $J_1, J_2, \ldots , J_q$.
    \\ \\    
    After conjugating by $g$, $J_1, J_2, \ldots, J_q$ are mapped to $gJ_1g^{-1}, gJ_2g^{-1}, \ldots, gJ_qg^{-1}$ where they are distinct from $\langle y\rangle$ not being contained in $K$. In addition, $\langle x\rangle$ is mapped to $\langle y \rangle$. Note that in order for the solvabilizers of $x$ and $y$ to be the same, they must contain the same copies of $C_3$. Therefore the two lists $J_1, J_2, \ldots, J_q$ and $gJ_1g^{-1}, gJ_2g^{-1}, \ldots, gJ_qg^{-1}$ are the same. We find the conjugation by $g$ \textit{permutes} $J_1, J_2, \ldots, J_q$, and denote this permutation by $\sigma_g$ where for any $1\leq i\leq q$, 
    $\sigma_g(J_i) = J_j$ for some $1\leq j\leq q$.
    \\ \\
    In what follows, we will show that $|\sigma_g|=|g|$. Let $n$ be a natural number such that ${\sigma_g}^n(J_i)=J_i$ for all $J_i$, $1\leq i\leq q$. Then by the definition of ${\sigma_g}$, we have $g^n J_i g^{-n} = J_i$, and so $g^n \in N_G(J_i)$. On the other hand, as shown before, $N_G(J_i)$ is a $3$-group since $J_i \cong C_3$. Therefore, $g^n$ has order divisible by $3$. We get $g^n=1$ since 
    $|g|\mid \frac{q-1}{2}$. This implies that $|g|\mid n$. Moreover, ${\sigma_g}^{|g|}$ is the identity permutation. Thus,  $|\sigma_g|=|g|$.
    However, this means that the cycle decomposition of $\sigma_g$ acting on $J_1, J_2, \ldots, J_q$ consists entirely of $|g|$-cycles. Considering $|g| \mid \frac{q-1}{2}$, it is coprime with $q$, a contradiction.  So we must have $\langle x\rangle = \langle y \rangle$, and we are done.
    \end{proof}
    \begin{lemma}\label{maximalSZ(2,2^p)}
    Let $x$ and $y$ be two elements in $G=\Sz(2, q)$ with $q=2^p$ and $p$ as an odd prime, satisfying $\Sol_G(x) = \Sol_G(y)$. Then $x$ and $y$ are contained in exactly the same maximal subgroups.
    \end{lemma}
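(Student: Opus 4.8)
The plan is to mirror the strategy of Lemmas \ref{maximalpsl(2,2^p)} and \ref{maximalpsl(2,3^p)}, reading off the maximal-subgroup data for $G=\Sz(q)$ from the corresponding table in the Appendix (see also \cite{tables}). Writing $r=\sqrt{2q}$, which is an integer since $q=2^p$ with $p$ odd, the maximal subgroups of $G$ fall into four types: the Frobenius normalizer $F=Q\rtimes C_{q-1}$ of a Sylow $2$-subgroup $Q$ of order $q^2$, the dihedral group $D_{2(q-1)}$, and the two Frobenius groups $C_{q-r+1}\rtimes C_4$ and $C_{q+r+1}\rtimes C_4$. Comparing the sizes of solvabilizers recorded in the table, the first step is to argue that $\Sol_G(x)=\Sol_G(y)$ forces $x$ and $y$ to be of the same type: both involutions, both of order $4$, both of order dividing $q-1$, both of order dividing $q-r+1$, or both of order dividing $q+r+1$. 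Here involutions and order-$4$ elements are separated because only the former lie in any copy of $D_{2(q-1)}$.

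The two torus cases of odd order $q\pm r+1$ are immediate: the table shows that such an element lies in a single maximal subgroup $C_{q\pm r+1}\rtimes C_4$, which is then its entire solvabilizer, so $\Sol_G(x)=\Sol_G(y)$ places $x$ and $y$ in the same maximal subgroup. For elements of order dividing $q-1$ I would repeat, in spirit, the counting argument of Lemma \ref{maximalpsl(2,2^p)}: such an $x$ lies in exactly two conjugates of the Borel $F=Q\rtimes C_{q-1}$ and one copy of $D_{2(q-1)}$; since distinct conjugates of $F$ meet in at most one maximal torus $C_{q-1}$ and $D_{2(q-1)}$ contains only $q-2$ nonidentity elements of order dividing $q-1$, the remaining subgroups in $\Sol_G(y)$ can cover at most $3(q-2)$ of the $q^2(q-2)$ such elements inside a fixed Borel. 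As $3(q-2)<q^2(q-2)$ for all admissible $q$, the two Borels and the dihedral subgroup must agree, so $x$ and $y$ lie in the same maximal subgroups.

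The genuinely delicate cases are the involutions and the order-$4$ elements, and here I would run a covering argument driven by the two families of cyclic Frobenius subgroups. Given a copy $M\cong C_{q+r+1}\rtimes C_4$ with $x\in M$, pick a generator $z$ of its cyclic kernel $C_{q+r+1}$; since $z$ has order $q+r+1$ it lies in a unique maximal subgroup of $G$, namely the normalizer $M$ of $\langle z\rangle$. As $z\in\Sol_G(x)=\Sol_G(y)$, the subgroup $\langle y,z\rangle$ is solvable and hence proper, forcing $y\in M$. Running this over every copy of $C_{q+r+1}\rtimes C_4$ and every copy of $C_{q-r+1}\rtimes C_4$ containing $x$ shows that $y$ lies in the intersection of all of them.

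The main obstacle is to deduce from this that $y=x$ for involutions, and $\langle y\rangle=\langle x\rangle$ for order-$4$ elements, which amounts to showing that the intersection of all cyclic Frobenius subgroups containing $x$ collapses to $\langle x\rangle$. This is subtler than in $\PSL(2,q)$ because the $2$-local structure of $\Sz(q)$ is richer: the Sylow $2$-subgroup is a nonabelian Suzuki $2$-group of exponent $4$, and a single order-$4$ element can lie in several cyclic Frobenius subgroups. I would control the intersection using three facts: $\gcd(q-r+1,q+r+1)=1$, so a copy of $C_{q-r+1}\rtimes C_4$ and a copy of $C_{q+r+1}\rtimes C_4$ meet only in a $2$-group; $C_G(t)=Q$ is a Sylow $2$-subgroup for each involution $t$, pinning every $2$-element to a unique $Q$; and a counting estimate showing that, since $x$ lies in $q^2/4$ copies of each cyclic family while any fixed order-$4$ element lies in only $O(q)$ of them, no order-$4$ element can survive in the full intersection once $q\geq 8$. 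Together these force the intersection down to $\langle x\rangle$, settling the involution case; the order-$4$ case then follows from the same covering applied to a torus generator $z$ of order $q\pm r+1$, together with the observation that $x$ and $x^2$ are pinned to the same Sylow $2$-subgroup.
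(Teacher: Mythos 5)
Your overall architecture matches the paper's proof: the same table-driven case split by element order, the torus cases $|x|\mid q_\pm$ dispatched because the solvabilizer is a single maximal subgroup, the $|x|\mid q-1$ case handled by transplanting the covering count from Lemma \ref{maximalpsl(2,2^p)}, and — for involutions and order-$4$ elements — the same key move of taking a generator $z$ of the cyclic kernel of a copy of $C_{q_\pm}\rtimes C_4$ containing $x$, noting $z$ lies in a unique maximal subgroup, and concluding that $x$ and $y$ lie in exactly the same copies of $C_{q_\pm}\rtimes C_4$. Up to that point your proposal is essentially the paper's argument.

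The gap is in how you finish those two cases. The paper closes them with one structural fact, visible from Table \ref{tblsz,2p}: the intersection of two distinct copies of $C_{q_+}\rtimes C_4$ is $C_2$ or $C_4$. Indeed, distinct copies share no nontrivial kernel element (each nonidentity element of order dividing $q_+$ lies in a unique copy), every element outside the kernel has order $2$ or $4$, and a subgroup consisting of $2$-elements in a Frobenius group with kernel of odd order is conjugate into a complement $C_4$, hence is cyclic of order at most $4$. Since a cyclic $2$-group has a unique involution and a unique subgroup of order $4$, this immediately gives $x=y$ in the involution case and $\langle x\rangle=\langle y\rangle$ (a common $C_4$, hence the same maximal subgroups) in the order-$4$ case. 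Your substitute machinery is both heavier and incomplete: the counting comparison $q/2<q^2/4$ does correctly exclude order-$4$ elements from the intersection of all copies containing an involution $x$, but none of your three facts rules out two \emph{distinct commuting involutions} surviving in that intersection — $C_G(t)=Q$ only places $x$ and $y$ in a common Sylow $2$-subgroup, and the coprimality of $q_-$ and $q_+$ controls only cross-family intersections, never intersections within one family. To exclude $\langle x,y\rangle\cong C_2\times C_2$ you still need exactly the cyclicity of $2$-subgroups of $C_{q_\pm}\rtimes C_4$ stated above, at which point the rest of your apparatus is unnecessary. Likewise, your closing sentence for the order-$4$ case (pinning $x$ and $x^2$ to a common Sylow $2$-subgroup) does not by itself show $x$ and $y$ lie in the same maximal subgroups; the pairwise-intersection fact does.
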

    \begin{proof}
    To do this, we use Table \ref{tblsz,2p} in the Appendix where it lists all maximal subgroups containing an element of a prime order dividing $|G|$. In the table we set $q_{\pm}:= q \pm \sqrt{2q} + 1$ (see also \cite[Theorem 3.5]{tables}). 
    \\ \\
    First, assume that $x$ and $y$ are involutions. There are $q^2/4$ copies of $C_{q_+}\rtimes C_4$ containing $x$ (respectively $y$). Then an element $z$ of order $q_+$ in a copy of $C_{q_+}\rtimes C_4$, $\langle x, z\rangle$ (respectively $\langle y, z\rangle$) is solvable which implies that $x\in \Sol_G(z)$ (respectively $y\in \Sol_G(z)$). On the other hand, $z$ is contained in a unique copy of $C_{q_+} \rtimes C_4$. It is deduced from $\Sol_G(x)=\Sol_G(y)$ that $x$ and $y$ are contained in exactly the same copies of $C_{q_+} \rtimes C_4$. From the table, we see that the intersection of any two copies of $C_{q_+} \rtimes C_4$ can be a $C_2$ or $C_4$, so we conclude that $x = y$.
    \\ \\
    Now consider when $x$ and $y$ are both of order $4$. By the same logic used for the involutions, it is observed that they are contained in the same copies of $C_{q_+} \rtimes C_4$. On the other hand, the intersection of any two copies of  $C_{q_+} \rtimes C_4$ is a $C_4$. This implies that $x$ and $y$ are both contained in the same copy of $C_4$ and therefore in the same maximal subgroups.
    \\ \\
    Next, assume the case where $x$ and $y$ are of order dividing $q-1$. We  use a similar argument to Lemma \ref{maximalpsl(2,2^p)} where the two elements with order dividing $q-1$ and show that $x$ and $y$ are contained in the same copies of $(C_2^p .C_2^p) \rtimes C_{q-1}$. It follows that they are contained in the same copy of $C_{q-1}$ and thus in the same maximal subgroups.
    \\ \\ 
    Finally, suppose that $x$ and $y$ are both of order dividing $q_+$ or both of order dividing $q_-$. As the solvabilizers of $x$ and $y$ both are exactly one maximal subgroup, they must be contained in the same maximal subgroup.
    \end{proof}

    \begin{lemma}\label{maximalPSL(2,p)}
     Let $x$ and $y$ be two elements $G=\PSL(2, p)$ with $p > 5$ and $p \equiv 2,3 \pmod 5$, satisfying $\Sol_G(x) = \Sol_G(y)$. Then $x$ and $y$ are contained in exactly the same maximal subgroups.
    \end{lemma}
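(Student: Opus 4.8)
The plan is to carry out the same family-by-family analysis as in Lemmas~\ref{maximalpsl(2,2^p)} and~\ref{maximalpsl(2,3^p)}, reading the containment data off Table~\ref{tbl2,p,23}. The hypothesis $p \equiv 2,3 \pmod 5$ ensures that $G = \PSL(2,p)$ contains no copy of $A_5$, so the maximal subgroups recorded in the table are the Borel $C_p \rtimes C_{(p-1)/2}$, the dihedral groups $D_{p-1}$ and $D_{p+1}$, and a single exceptional family, which is $S_4$ when $p \equiv \pm 1 \pmod 8$ and $A_4$ otherwise. Comparing the solvabilizer sizes listed there, the equality $\Sol_G(x) = \Sol_G(y)$ forces $x$ and $y$ into the same family: both of order $p$; both of order dividing $(p-1)/2$ and greater than $4$; both of order dividing $(p+1)/2$ and greater than $4$; both involutions; both of order $3$; or both of order $4$ (this last only when $S_4 \le G$, which is also the only situation producing order-$4$ elements). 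I would dispose of these families in turn; the only real content is to push equality of solvabilizers into a common maximal subgroup, after which the classification of cyclic subgroups inside it finishes the argument.

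The ``generic'' families are immediate. An element of order $p$ lies in a unique maximal subgroup, its Borel, so $\Sol_G(x) = \Sol_G(y)$ is that Borel and both elements lie in it. An element whose order divides $(p+1)/2$ but exceeds $4$ lies in a unique $D_{p+1}$ and in no other maximal subgroup, so $\Sol_G(x)$ is a single $D_{p+1}$ and the conclusion is trivial. For an element $x$ whose order divides $(p-1)/2$ but exceeds $4$, I would exploit the odd characteristic: the order-$p$ elements occurring in $\Sol_G(x)$ are exactly the unipotent elements of the two Borels through $x$, whose unipotent radicals are disjoint and each determine their Borel. Hence matching the order-$p$ elements of $\Sol_G(x)$ and $\Sol_G(y)$ pins $x$ and $y$ into the same two Borels, whose intersection is the split torus $T$, and $T$ also determines the unique $D_{p-1} = N_G(T)$ through $x$; so $x$ and $y$ lie in exactly the same maximal subgroups. (This replaces the covering count of Lemma~\ref{maximalpsl(2,2^p)}, which is unavailable there because in characteristic $2$ the unipotent elements are themselves involutions.)

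The small orders met inside the exceptional subgroups are the delicate part. For involutions I would argue as in the dihedral step of Lemma~\ref{maximalpsl(2,3^p)}: in each copy of $D_{p+1}$ (or $D_{p-1}$) through $x$ pick a generator $z$ of its cyclic part; since $z$ has order exceeding $4$ it avoids the exceptional subgroups and lies in a unique dihedral group of its type, so $z \in \Sol_G(x) = \Sol_G(y)$ drags $y$ into that copy, and running over all such copies, whose common intersection is the $C_2$ generated by $x$, forces $y = x$. For order $3$, and for order $4$ when $S_4 \le G$, the decisive simplification over Lemma~\ref{maximalpsl(2,3^p)} is that here these elements are semisimple rather than unipotent: each lies in a single cyclic maximal torus, whose normalizer is a single $D_{p-1}$ or $D_{p+1}$, and a cyclic group has a unique subgroup of each order. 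Concretely, I would pin $x$ and $y$ into that common torus---via the order-$p$ elements of their solvabilizer in the split case, or via a torus generator of order exceeding $4$ in the non-split case, exactly as for involutions---and then read off $\langle x \rangle = \langle y \rangle$ from the uniqueness of the order-$3$ (respectively order-$4$) subgroup of the cyclic torus. Thus the permutation argument needed for the unipotent order-$3$ elements of $\PSL(2,3^p)$ is not required here.

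The main obstacle is the bookkeeping that makes each pin-down legitimate: one must know that a torus generator of order exceeding $4$ exists and lies outside every $A_4$ or $S_4$, which requires the torus orders $(p\pm 1)/2$ to be large enough, and one must track whether the exceptional subgroup is $A_4$ or $S_4$ (via $p \bmod 8$) and whether the elements of a given order are split or non-split (via $p \bmod 3$). These constraints hold for all admissible primes except the smallest, namely $p = 7$, where the dihedral groups are no longer maximal; that case I would dispatch directly from Table~\ref{tbl2,p,23}. Once $\langle x \rangle = \langle y \rangle$, or equality of the containing maximal subgroups, is established in every family, $x$ and $y$ lie in exactly the same maximal subgroups, which is the assertion of the lemma.
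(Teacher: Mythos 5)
Your proposal is correct and, in five of the six families, is essentially the paper's own argument: the same reduction via the solvabilizer sizes in Tables \ref{tbl2,p,1}--\ref{tbl2,p,23}, the same pinning of elements of order dividing $p-1$ (and of order $3$ or $4$ in the split case) into the two common Borels via the order-$p$ elements of the solvabilizer, with the split torus as their intersection, and the same use of the unique $D_{p+1}$ plus uniqueness of the order-$3$ (resp.\ order-$4$) subgroup of the cyclic torus in the non-split case. The one genuine deviation is the involution case: you intersect \emph{all} copies of $D_{p+1}$ through $x$ (the device the paper uses in Lemma \ref{maximalpsl(2,3^p)}), whereas the paper splits on $p \bmod 4$ and intersects a single $D_{p+1}$ with either a Borel or the common $D_{p-1}$, each pinned down exactly as in your other cases. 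Your variant needs one fact the paper's route avoids: that no Klein four-group lies in every $D_{p+1}$ through $x$. This is true but not free --- for $p\equiv 3\pmod 4$ a $D_{p+1}$ containing a Klein four $V$ has its cyclic part equal to the centralizer torus of one of the three involutions of $V$, so at most three such dihedrals contain $V$, while $x$ lies in at least $(p-1)/2\geq 5$ of them --- so you should supply it. Finally, your flagging of $p=7$ is apt (the appendix tables presuppose maximality of the dihedral subgroups, which fails there), but ``dispatch from Table \ref{tbl2,p,23}'' is not the remedy, since for $p=7$ the tables simply do not apply; the paper instead treats $p=7$ separately by direct computation in Theorem \ref{psl(2,p)}.
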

    \begin{proof}
    We demonstrate that any two involutions have distinct solvabilizers. To do this, we suppose that $x$ and $y$ are involutions with the same solvabilizers and try to show that $x=y$. According to the Tables \ref{tbl2,p,1}--\ref{tbl2,p,23} in the Appendix, $x$ and $y$ are contained in some copies of $D_{p+1}$. It is also observed that every element of order dividing $p+1$ is contained in exactly one copy of $D_{p+1}$. This yields that $x$ and $y$ are exactly in the same copies of $D_{p+1}$.
\\ \\
We will consider the cases where $p \equiv 1 \pmod 4$
and $p \equiv 3 \pmod 4$ separately. Assume first that $p \equiv 1 \pmod 4$. In this case, $C_{p} \rtimes C_{(p-1)/2}$ contains some involutions. So it is seen from Tables 4--11 that $x$ and $y$ as two involutions are each contained in two copies of $C_{p} \rtimes C_{(p-1)/2}$.  We also see that every element of order $p$ is contained in exactly one copy of $C_p \rtimes C_{(p-1)/2}$. This implies that $x$ and $y$ are contained in exactly the same copies of $C_p \rtimes C_{(p-1)/2}$. So they are contained in the intersection of a copy of $C_p \rtimes C_{(p-1)/2}$ and a copy of $D_{p+1}$ which has order at most $2$.  This forces $x$ to be equal to $y$. Consider next the case where $p \equiv 3 \pmod 4$.  Then there is no copy of $C_p \rtimes C_{(p-1)/2}$ containing $x$ and $y$. We also see from the Tables \ref{tbl2,p,1}-\ref{tbl2,p,23} that there are some copies of $D_{p-1}$ containing $x$ and $y$. On the other hand, it is observed by the tables that 
there is only one copy of $D_{p-1}$ containing any element of order dividing $p-1$. It follows that $x$ and $y$ are contained in exactly the same copies of $D_{p-1}$. So they are contained in the intersection of a copy of $D_{p-1}$ and a copy of $D_{p+1}$ which implies that $x = y$. 
\\ \\
We assume now that $x$ and $y$ are both of order $p$. According to the tables, $x$ and $y$ are each contained in exactly one copy of $C_p \rtimes C_{(p-1)/2}$. So considering $\Sol_G(x)=\Sol_G(y)$, we can find that $x$ and $y$ are contained in the same copy of $C_p \rtimes C_{(p-1)/2}$.  Noting that $C_p \rtimes C_{(p-1)/2}$ has a unique Sylow $p$-subgroup, it is seen that they are contained in the same copy of $C_p$.
\\ \\
If  $x$ and $y$ are both of order dividing $p + 1$, it is seen again from Tables \ref{tbl2,p,1}--\ref{tbl2,p,23} that $x$ and $y$ are each contained in exactly one copy of $D_{p+1}$ which follows that 
they are contained in the same copy of $D_{p+1}$ having a unique subgroup isomorphic to $C_{(p-1)/2}$. Therefore,  $x$ and $y$ are both in this copy of $C_{(p-1)/2}$.
\\ \\
Suppose that $x$ and $y$ are both of order dividing $p - 1$. We see from Tables \ref{tbl2,p,1}--\ref{tbl2,p,23} that every element of order dividing $p-1$ is contained in two copies of  $C_p \rtimes C_{(p-1)/2}$ 
and one copy of $D_{p - 1}$. Since the solvabilizers of $x$ and $y$ are equal each containing two copies of  $C_p \rtimes C_{(p-1)/2}$, they must cover the same elements of order $p$. This implies that $x$ and $y$ are contained in the same copies of $C_p \rtimes C_{(p-1)/2}$ which we call $H_1$ and $H_2$. As $x$ is contained in a unique copy of $D_{p-1}$ and so its unique subgroup $C_{(p-1)/2}$, we can express $x = z^k$ for some $k$ where $\langle z\rangle\cong C_{(p-1)/2}$ . Note that $z$ has order $\frac{p-1}{2}$ and is contained in $H_1$ and $H_2$ since otherwise there is one more copy of  $C_p \rtimes C_{(p-1)/2}$ containing $x$ which is impossible. By a similar argument, we can see that $z$ is contained in the same copy of $D_{p-1}$ as $x$. Therefore, $\Sol_G(z) = \Sol_G(x)$. On the other hand, $C_{(p-1)/2}\cong\langle z \rangle \subseteq H_1 \cap H_2$ and $C_{(p-1)/2}$ is a  maximal subgroup of   $C_p \rtimes C_{(p-1)/2}$ which forces that  $H_1 \cap H_2 = \langle z \rangle$. However, $y$ is also contained in $H_1 \cap H_2$. So $x$ and $y$ are both powers of $z$ which means that they belongs to the same cyclic subgroup. Thus, they are contained in the same maximal subgroups. 
\\ \\
Next, let $x$ and $y$ be two elements of order $3$. We consider the two possible cases $p\equiv 1\pmod 6$ and $p\equiv 5\pmod 6$ separately. If $p\equiv 1\pmod 6$ , then it is seen from the tables that $x$ and $y$ are contained in two copies of $C_p \rtimes C_{(p-1)/2}$, one copy of $D_{p-1}$ and some copies of $A_4$ or $S_4$. By the same reasoning as the case where $x$ and $y$ have order dividing $p-1$, we can conclude that $x$ and $y$ are contained in the same copies of $C_p \rtimes C_{(p - 1)/2}$. We can also find an element $z$ of order $\frac{p-1}{2}$ such that $x \in \langle y \rangle$ and so $z$ is in $H_1$ and $H_2$.  By a similar method to the previous case, we can show that $H_1 \cap H_2 = \langle z \rangle$ and so $y \in \langle z \rangle$.  On the other hand, $\langle z \rangle$ as a cyclic of order $\frac{p-1}{2}$ has exactly two elements of order $3$ which yields that either $x = y$ or $y=x^2$.  Therefore, $x$ and $y$ are in the same maximal subgroups. If $p\equiv 5\pmod 6$, then $x$ and $y$ are not in any copies of $C_p \rtimes C_{(p - 1)/2}$. In this case, $3\mid p+1$. We can now observe in Tables \ref{tbl2,p,1}-\ref{tbl2,p,23} that any element of order $3$ is contained in a unique copy of $D_{p+1}$. Since $x$ and $y$ have the same solvabilizers covering the same elements of order $\frac{p+1}{2}$, $x$ and $y$ are contained in the same copy of $D_{p+1}$. On the other hand, $D_{p+1}$ has a unique cyclic subgroup isomorphic to $C_{(p+1)/2}$ and the other elements are all involutions. Thus $D_{p+1}$ has exactly two elements of order $3$ and so we have either $y=x$ or $y=x^2$, which implies that they are in the same maximal subgroups.
\\ \\
The last case we need to check is when there are some elements of order $4$. It is seen from Tables \ref{tbl2,p,1}-\ref{tbl2,p,23} that that all cases lie in the two cases where $p\equiv 1\pmod 8$ and
$p\equiv 7\pmod 8$. First let $p\equiv 1\pmod 8$. Then any element of order $4$ is contained in two copies of $C_p \rtimes C_{(p - 1)/2}$ , one copy of $D_{p - 1}$ and some copies of $S_4$. We can apply the same reasoning as mentioned above to show that the two elements $x$ and $y$ of order $4$ with the same solvabilizers are contained in the same copies of $C_p \rtimes C_{(p - 1)/2}$. By taking $z$ as an element of order $\frac{p-1}{2}$ such that  $x \in \langle z \rangle$, we can find that $z$ is in $H_1$ and $H_2$ and so $H_1 \cap H_2 = \langle z \rangle$. Therefore, we have $y \in H_1 \cap H_2 = \langle z \rangle$.  Since $\langle z \rangle$ has exactly two elements of order $4$, we have either $y=x$ or $y=x^3$. Hence, they are in the same maximal subgroups. If $p\equiv 7\pmod 8$, then $x$ and $y$ are not in any copies of $C_p \rtimes C_{(p - 1)/2}$.  In this case, it is observed that any element of order $4$ is contained in a unique copy of $D_{p+1}$.
So they are contained in the same unique copy of $D_{p+1}$. Since $x$ and $y$ have the same solvabilizers covering the same elements of order $\frac{p+1}{2}$, $x$ and $y$ are contained in the same copy of $D_{p+1}$.  On the other hand, $D_{p+1}$ has a unique cyclic subgroup isomorphic to $C_{(p+1)/2}$ and the other elements are all involutions. Thus $D_{p+1}$ has exactly two elements of order $4$ and so we have either $y=x$ or $y=x^3$ which implies that they are in the same maximal subgroups.
\end{proof}

We can now determine  $|\Solv(G)|$ for all minimal simple groups $G$.
\begin{theorem}\label{solvpsl(2,2^p)}
    $|\Solv(\PSL(2, q))| = 2q^2$, where $q=2^p$ and $p$ is a prime.
\end{theorem}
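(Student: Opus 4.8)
The plan is to count equivalence classes of elements under the relation $x \sim y$ if and only if $\Sol_G(x) = \Sol_G(y)$, since $|\Solv(G)|$ is exactly the number of such classes. Recall that for a minimal simple group one has $\Sol_G(x) = \bigcup_{x \in M} M$, the union ranging over the maximal subgroups $M$ containing $x$; consequently, if $x$ and $y$ lie in exactly the same maximal subgroups then $\Sol_G(x) = \Sol_G(y)$, while Lemma \ref{maximalpsl(2,2^p)} supplies the converse. Thus the classes correspond precisely to the patterns of maximal-subgroup membership, and it suffices to count these.

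First I would stratify the nonidentity elements of $G$ according to the three possibilities allowed by Lemma \ref{maximalpsl(2,2^p)}: involutions, elements of order dividing $q-1$, and elements of order dividing $q+1$. These strata are disjoint, since $q-1$ and $q+1$ are odd (so that no nonsplit or split torus element is an involution) and a torus element cannot simultaneously divide $q-1$ and $q+1$. The identity contributes the single class with $\Sol_G(1) = G$. For the involutions, the proof of Lemma \ref{maximalpsl(2,2^p)} already establishes that each of the $q^2 - 1$ involutions has its own solvabilizer, giving $q^2 - 1$ classes.

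Next I would count the torus classes. For an element $x$ of order dividing $q+1$, Table \ref{tbl2,2p} shows $\Sol_G(x)$ is a single copy of $D_{2(q+1)}$, and all $q$ nonidentity elements of a given nonsplit torus $C_{q+1}$ share this solvabilizer; hence these classes biject with the nonsplit tori, of which there are $|G|/|N_G(C_{q+1})| = q(q^2-1)/(2(q+1)) = q(q-1)/2$. For an element of order dividing $q-1$, Lemma \ref{maximalpsl(2,2^p)} shows $x \sim y$ precisely when they lie in a common split torus $C_{q-1} \cong K_1 \cap K_2$, so these classes biject with the split tori, numbering $|G|/|N_G(C_{q-1})| = q(q^2-1)/(2(q-1)) = q(q+1)/2$.

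Summing the four contributions then yields $1 + (q^2-1) + q(q-1)/2 + q(q+1)/2 = 2q^2$, as claimed. Beyond invoking Lemma \ref{maximalpsl(2,2^p)}, the only genuinely delicate points are verifying that distinct maximal tori of the same type meet only in the identity—so that each torus really constitutes a single class and the strata genuinely partition $G$—and pinning down the normalizer orders $|N_G(C_{q\pm 1})| = 2(q \pm 1)$ used in the index computations. A useful consistency check is that the associated element counts $(q-2)\cdot q(q+1)/2$ and $q\cdot q(q-1)/2$, together with the $q^2-1$ involutions and the identity, recover $|G| = q(q^2-1)$ exactly; I expect this bookkeeping, rather than any conceptual difficulty, to be the main obstacle.
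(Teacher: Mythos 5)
Your proposal is correct and takes essentially the same approach as the paper: both reduce everything to Lemma \ref{maximalpsl(2,2^p)}, assign each of the $q^2-1$ involutions its own class, put the remaining classes in bijection with the split and nonsplit tori, and add $1$ for the identity. The only difference is cosmetic — the paper cites \cite[Theorem 2.1]{King} for the torus counts $\frac{q(q+1)}{2}$ and $\frac{q(q-1)}{2}$, whereas you derive the same numbers as indices of the dihedral normalizers $N_G(C_{q\pm 1}) \cong D_{2(q\pm 1)}$.
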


\begin{proof}
   Let $G=\PSL(2, q)$.  We can proceed by counting the solvabilizers using Lemma \ref{maximalpsl(2,2^p)}.
     \\ \\
    In the case where $x$ is an involution, every involution has a distinct solvabilizer.  So the number of solvabilizers in this case is just the number of involutions, which is $q^2 - 1$.
    \\ \\
    In the case where $x$ and $y$ have order dividing $q - 1$, we see from the lemma that $x, y$ have the same solvabilizer if and only if they are contained in the same copy of $C_{q-1}$.  So the number of solvabilizers of such elements is the number of copies of $C_{q-1}$, which is $\frac{q(q+1)}{2}$ by \cites[Theorem 2.1]{King}.
    \\ \\
    In the case where $x$ has order dividing $q + 1$, an element $y$ has the same solvabilizer if and only if $x, y$ are contained in the same copy of $C_{q+1}$.  So the number of solvabilizers is the number of copies of $C_{q+1}$, which is $\frac{q(q-1)}{2}$ by \cites[Theorem 2.1]{King}.
    \\ \\
    In total, we obtain $|\Solv(G)| = (q^2 - 1) + \frac{q(q+1)}{2} + \frac{q(q-1)}{2} + 1 = 2q^2$, where we add $1$ to include the solvabilizer of the identity.
\end{proof}

\begin{theorem}
    $|\Solv(\PSL(2, q))| = \frac{4q^2 - q + 1}{2}$, where $q=3^p$ and $p$ is an odd prime.
\end{theorem}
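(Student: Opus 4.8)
The plan is to mirror the counting argument used for Theorem \ref{solvpsl(2,2^p)}, now relying on Lemma \ref{maximalpsl(2,3^p)}. Write $G = \PSL(2,q)$ with $q = 3^p$. Lemma \ref{maximalpsl(2,3^p)} and its proof show that two nonidentity elements produce the same solvabilizer only when they fall into one of four types --- both involutions, both of order $3$, both of order dividing $q-1$, or both of order dividing $q+1$ --- and in each type they pin down exactly when the solvabilizers coincide. So I would count the distinct solvabilizers type by type, add $1$ for the identity (whose solvabilizer is $G$), and check that the total collapses to $\frac{4q^2-q+1}{2}$.

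For the involutions, the lemma shows each one has its own solvabilizer, so their contribution is the number of involutions. Since $p$ is odd we have $q \equiv 3 \pmod 4$, so the centralizer of an involution is dihedral of order $q+1$, giving $\frac{|G|}{q+1} = \frac{q(q-1)}{2}$ involutions. For elements of order $3$, the lemma shows $\Sol_G(x) = \Sol_G(y)$ forces $\langle x\rangle = \langle y\rangle$, so the contribution is the number of subgroups $C_3$, which is $\frac{q^2-1}{2}$ by \cite[Theorem 2.1]{BlythRobinson}. For elements of order dividing $q-1$, equal solvabilizers force them into a common copy of $C_{(q-1)/2}$; since each such cyclic group lies in a unique $D_{q-1}$ and conversely, the contribution is the number of split tori $C_{(q-1)/2}$, namely $\frac{q(q+1)}{2}$ by \cite[Theorem 2.1]{King}. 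Finally, for non-involution elements of order dividing $q+1$, the solvabilizer is the single copy of $D_{q+1}$ containing them, so the contribution is the number of nonsplit tori $C_{(q+1)/2}$, namely $\frac{q(q-1)}{2}$, again by \cite[Theorem 2.1]{King}.

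Summing these and then adding $1$ for the identity, I expect
\[
|\Solv(G)| = 1 + \frac{q(q-1)}{2} + \frac{q^2-1}{2} + \frac{q(q+1)}{2} + \frac{q(q-1)}{2} = \frac{4q^2-q+1}{2},
\]
which is the claimed value.

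The step I expect to be the real obstacle is keeping the partition of nonidentity elements genuinely disjoint and matching each class to the correct count. In particular one must use that $q=3^p$ forces $3 \nmid q\pm 1$, so that order-$3$ elements are exactly the unipotent elements and do not overlap the toral classes; and one must remember that although involutions have order dividing $q+1$, they are counted separately from the remaining elements of order dividing $q+1$ precisely because their solvabilizer is a union of $\frac{q+3}{2}$ copies of $D_{q+1}$ rather than a single copy. The dependence of the involution count on the residue $q \equiv 3 \pmod 4$ (as opposed to $q \equiv 1 \pmod 4$, which would replace the centralizer by $D_{q-1}$ and change the count) is the one arithmetic input that is easy to get wrong.
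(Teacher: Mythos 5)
Your proposal is correct and follows essentially the same route as the paper's own proof: partition the nonidentity elements by order type via Lemma \ref{maximalpsl(2,3^p)}, count distinct solvabilizers as the number of copies of $C_3$, $C_{(q-1)/2}$, and $C_{(q+1)/2}$ (via \cite{BlythRobinson} and \cite{King}) plus one solvabilizer per involution, and add $1$ for the identity. Your only deviation is deriving the involution count $\frac{q(q-1)}{2}$ from the dihedral centralizer structure under $q \equiv 3 \pmod 4$ rather than citing \cite[Theorem 2.1]{King} with $3^p \equiv 3 \pmod 8$, which yields the same number.
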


\begin{proof}
    Let $G=\PSL(2, q)$. We apply Lemma \ref{maximalpsl(2,3^p)} to count the solvabilizers.
    \\ \\
    We see from the lemma that every two involutions have different solvabilizers. Considering \cites[Theorem 2.1]{King} and the fact that $3^p \equiv 3 \pmod 8$, we count $\frac{q(q-1)}{2}$ solvabilizers of involutions.
    \\ \\
    For the two elements $x$ and $y$ having order $3$, $x$ and $ y$ have the same solvabilizer if and only if they are in the same copy of $C_3$.  So the number of solvabilizers of elements of order $3$ is the number of copies of $C_3$, which is $\frac{q^2 - 1}{2}$.
    \\ \\
    In the case where $x$ and $y$ have order dividing $q - 1$, $x, y$ have the same solvabilizer if and only if they are contained in the same copy of $C_{(q-1)/2}$.  So the number of solvabilizers of such elements is the number of copies of $C_{(q-1)/2}$, which is $\frac{q(q+1)}{2}$ by \cites[Theorem 2.1]{King}.
    \\ \\
    In the case where $x$ and $y$ have order dividing $q + 1$, we see that $x$ and $y$ have the same solvabilizer if and only if they are contained in the same copy of $C_{(q+1)/2}$.  So the number of solvabilizers is the number of copies of $C_{(q+1)/2}$, which is $\frac{q(q-1)}{2}$ by \cites[Theorem 2.1]{King}.
    \\ \\
    In total, we obtain $|\Solv(G)| = \frac{q(q-1)}{2} + \frac{q^2 - 1}{2} + \frac{q(q+1)}{2} + \frac{q(q-1)}{2} + 1 = \frac{4q^2 - q + 1}{2}$, where we add $1$ to account for the solvabilizer of the identity.
\end{proof}

\begin{theorem}
    $|\Solv(\Sz(q))| = \frac{3q^4+q^3-q^2+q}{2}$, where $q=2^p$ and $p$ is an odd prime.
\end{theorem}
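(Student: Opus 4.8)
The plan is to mirror the counting arguments from the proofs of the two preceding theorems, now invoking Lemma~\ref{maximalSZ(2,2^p)}. The sizes of solvabilizers recorded in Table~\ref{tblsz,2p} show that $\Sol_G(x) = \Sol_G(y)$ forces $x$ and $y$ to share an order-type, so the distinct solvabilizers split into five families indexed by whether the nonidentity element is an involution, has order $4$, has order dividing $q-1$, has order dividing $q_+$, or has order dividing $q_-$; adding the solvabilizer of the identity (which is all of $G$, hence distinct from every proper one) contributes the final $+1$. Within each family, Lemma~\ref{maximalSZ(2,2^p)} identifies the relation ``equal solvabilizer'' with ``generates the same cyclic subgroup'' (the converse being automatic, since $\langle x\rangle = \langle y\rangle$ makes $x$ and $y$ lie in exactly the same subgroups). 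Thus I would reduce the whole computation to counting involutions, copies of $C_4$, and copies of $C_{q-1}$, $C_{q_+}$, and $C_{q_-}$.

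The three torus counts are immediate index computations from the maximal subgroup list in the table. A cyclic $C_{q-1}$ has normalizer $D_{2(q-1)}$, giving $|G|/(2(q-1)) = q^2(q^2+1)/2$ of them; a $C_{q_+}$ (resp.\ $C_{q_-}$) is normalized by a maximal subgroup $C_{q_+}\rtimes C_4$ (resp.\ $C_{q_-}\rtimes C_4$) of order $4q_+$ (resp.\ $4q_-$), so using $q_+q_- = q^2+1$ there are $q^2(q-1)q_-/4$ and $q^2(q-1)q_+/4$ copies respectively. Since an element of order dividing $q_\pm$ lies in a unique maximal subgroup, its solvabilizer is exactly that subgroup, and distinct tori therefore yield distinct solvabilizers; so each torus supports precisely one solvabilizer and these families contribute those three numbers.

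The main obstacle, and the only step requiring genuine structural input beyond the tables, is the count of involutions and of copies of $C_4$. Here I would use that the Sylow $2$-subgroups $Q$ of $\Sz(q)$ form a TI-set, have order $q^2$, and that $Z(Q)$ is elementary abelian of order $q$, consisting exactly of the involutions of $Q$ together with the identity, while the remaining $q^2-q$ elements of $Q$ have order $4$. Since the number of Sylow $2$-subgroups equals the index $[G : Q\rtimes C_{q-1}] = q^2+1$ and distinct ones meet trivially, there are $(q^2+1)(q-1)$ involutions (each giving its own solvabilizer by the lemma) and $(q^2+1)(q^2-q)$ elements of order $4$, hence $\tfrac{1}{2}(q^2+1)(q^2-q) = \tfrac{1}{2}q(q-1)(q^2+1)$ copies of $C_4$.

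Finally I would assemble the total. Using $q_+ + q_- = 2(q+1)$ to combine the two $q_\pm$-torus terms into $q^2(q^2-1)/2$, the sum
\[
1 + (q^2+1)(q-1) + \frac{q(q-1)(q^2+1)}{2} + \frac{q^2(q^2+1)}{2} + \frac{q^2(q^2-1)}{2}
\]
collapses to $\dfrac{3q^4+q^3-q^2+q}{2}$ after clearing denominators, which is the claimed value. I expect the algebra to be routine once the five counts are fixed; the delicate points are justifying the exact involution and order-$4$ counts from the Sylow $2$-structure, and confirming from Table~\ref{tblsz,2p} that no two elements of different order-types can ever share a solvabilizer.
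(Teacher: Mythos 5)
Your proposal is correct and follows essentially the same route as the paper: both split the solvabilizers into the five order-type families justified by Lemma \ref{maximalSZ(2,2^p)}, count involutions, copies of $C_4$, copies of $C_{q-1}$, and the maximal subgroups $C_{q_+}\rtimes C_4$ and $C_{q_-}\rtimes C_4$ via $q_+q_-=q^2+1$, and add $1$ for the identity. The only cosmetic differences are in sourcing the subsidiary counts --- you derive the involution and order-$4$ counts from the TI Sylow $2$-structure (with $Z(Q)$ of order $q$ containing all involutions) and count the $C_{q-1}$ tori by the normalizer index $|G|/|D_{2(q-1)}|$, whereas the paper cites these element and subgroup counts from the literature --- and your use of $q_++q_-=2(q+1)$ to merge the two torus terms is a tidier assembly of the same arithmetic, which indeed yields $\frac{3q^4+q^3-q^2+q}{2}$.
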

\begin{proof}
We start with counting distinct solvabilizers produced by involutions. It is seen from Lemma \ref{maximalSZ(2,2^p)} that the solvabilizers of any two involutions are distinct. So we need to count all involutions. According to \cite{BrayHolt}, $G$ has $q^2 + 1$ Sylow $2$-subgroups, resulting in $q^2 + 1$ maximal subgroups of the form $(C_2^p .C_2^p) \rtimes C_{q-1}$. Therefore, the number of involutions in total is $(q^2 + 1)(q - 1)$.
\\ \\
Next count distinct solvabilizers of elements of order $4$ by applying Lemma \ref{maximalSZ(2,2^p)}. Note that any two elements $x$ and $y$ of order $4$ having the same solvabilizer, are contained in the same maximal subgroups and so the same copy of $C_4$ and so $y = x$ or $y= x^3$. Thus any element of order $4$ has the same solvabilizer as exactly one other element of order $4$. As a result of \cite{SuzukiType}, there are $q(q^2+1)(q-1)$ elements of order $4$ which implies that there are $\frac{q(q^2+1)(q-1)}{2}$ different solvabilizers for elements of order $4$.
\\ \\
We next count distinct solvabilizers of elements of order dividing $q-1$. We proved in Lemma \ref{maximalSZ(2,2^p)} that given two elements of order dividing $q-1$,  they have the same solvabilizer if and only if they are contained in the same maximal subgroups, which holds if and only if they are in the same copy of $C_{q-1}$. So we just need to count the number of copies of $C_{q-1}$ in $G$. It is found in \cite{SuzukiType} that the number of elements of order $i$ as a divisor of $q- 1$ is $\varphi(i)q^2(q^2 + 1)/2$ where $\varphi$ is Euler's Totient Function. This means that there are exactly $\frac{q^2(q^2+1)}{2}$ copies of $C_{q-1}$. Therefore, the number of distinct solvabilizers of such elements in $G$ is $\frac{q^2(q^2+1)}{2}$.
\\ \\
Now compute the number of distinct solvabilizers produced by the elements of order dividing $q_+$. As the solvabilizer of such element is a copy of $C_{q_+} \rtimes C_4$, we need to count the number of copies of $C_{q_+} \rtimes C_4$ to find all distinct solvabilizers. 
It has been stated in \cite{BrayHolt} that up to conjugacy there is a maximal subgroup of type $C_{q_+} \rtimes C_4$ which is the normalizer of cyclic groups of orders $q_-$. So it is evident that the number of maximal subgroups $C_{q_+} \rtimes C_4$ is the following:
$$|G:N_G(C_{q_-})|=|G:C_{q_+} \rtimes C_4|=\frac{ q^2(q-1)(q^2+1)}{4q_+}=\frac{q^2(q_-)(q-1)}{4}.$$
\\
By a similar argument, we obtain that the number of distinct solvabilizers of elements of order dividing $q_-$ is $\frac{q^2(q_+)(q-1)}{4}$.
\\
Finally, we can find the total number of distinct solvabilizers in $G$ by adding up $$(q^2 + 1)(q - 1) + \frac{q(q^2+1)(q-1)}{2} + \frac{q^2(q^2+1)}{2} + \frac{q^2(q_-)(q-1)}{4} + \frac{q^2(q_+)(q-1)}{4}$$
$$ + 1 = \frac{3q^4 + q^3 - q^2 + q}{2},$$ where we add $1$ to account for the solvabilizer of the identity.
\end{proof}

\begin{theorem}\label{psl(2,p)}
$|\Solv(\PSL(2, p))|$, where $p>7$ is a prime number such that $p \equiv 2$ or $3 \pmod 5$ has the following values: 
    \begin{itemize}
        \item[{\rm (1)}] $\frac{5}{2}p^2 + \frac{5}{2}p + 2$ when $p\equiv 1\pmod{24}$,
        \item[{\rm (2)}] $2p^2 + p + 2$ when $p\equiv 5\pmod{24}$,
        \item[{\rm (3)}] $\frac{5}{2}p^2 + \frac{1}{2}p + 2$ when $p\equiv 7\pmod{24}$,
        \item[{\rm (4)}] $2p^2 + 2$ when $p\equiv 11\pmod{24}$,
        \item[{\rm (5)}] $2p^2 + 2p + 2$ when $p\equiv 13\pmod{24}$, 
        \item[{\rm (6)}] $\frac{5}{2}p^2 + \frac{3}{2}p + 2$ when $p\equiv 17\pmod{24}$,
        \item[{\rm (7)}] $2p^2 + p + 2$ when $p\equiv 19\pmod{24}$,
        \item[{\rm (8)}] $\frac{5}{2}p^2 - \frac{1}{2}p + 2$ when $p\equiv 23\pmod{24}$. 
    \end{itemize}
\end{theorem}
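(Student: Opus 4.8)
The plan is to count the equivalence classes of elements of $G=\PSL(2,p)$ under the relation $x\sim y \iff \Sol_G(x)=\Sol_G(y)$, since $|\Solv(G)|$ is exactly the number of such classes. By Lemma \ref{maximalPSL(2,p)}, $x\sim y$ forces $x$ and $y$ to lie in exactly the same maximal subgroups, and conversely two elements generating the same cyclic group trivially share a solvabilizer. The case analysis in that lemma already sorts the non-identity elements into six types according to which maximal subgroups contain them: involutions (each its own class); elements of order $p$ (equivalent iff they share a Sylow $p$-subgroup $C_p$); elements of order $3$ (equivalent iff they generate the same $C_3$); elements of order $4$ (equivalent iff they generate the same $C_4$); and the \emph{generic} elements of order dividing $p-1$ or $p+1$ whose order is not $2,3,4$ (equivalent iff they lie in the same maximal torus $C_{(p-1)/2}$, resp.\ $C_{(p+1)/2}$). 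Orders $3$ and $4$ must be separated from the generic torus elements precisely because such elements also lie in copies of $A_4$ or $S_4$, which enlarges their solvabilizer; here the hypothesis $p\equiv 2,3\pmod 5$ is essential, since it guarantees $A_5\not\leq G$ so that elements of order $5$ (and all orders at least $5$) stay generic.

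First I would record the structural counts, all following from the subgroup structure of $\PSL(2,p)$ and \cite{King}: there are $p+1$ Sylow $p$-subgroups, $\frac{p(p+1)}{2}$ split maximal tori $C_{(p-1)/2}$, and $\frac{p(p-1)}{2}$ nonsplit maximal tori $C_{(p+1)/2}$. Since $p\geq 11$, every maximal torus has a generator of order $(p-1)/2$ or $(p+1)/2$, which is at least $5$ and hence generic; so each torus contributes exactly one generic solvabilizer, and distinct tori give distinct solvabilizers by the lemma. Adding the solvabilizer of the identity, these four families already contribute
\begin{equation*}
1 + (p+1) + \frac{p(p+1)}{2} + \frac{p(p-1)}{2} = p^2 + p + 2 .
\end{equation*}

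It remains to count the three families whose sizes depend on congruences: the involutions, the $C_3$'s, and the $C_4$'s. Writing $a=\frac{p(p+1)}{2}$ and $b=\frac{p(p-1)}{2}$, an element of order $2$, $3$, or $4$ lies in a split torus exactly when $2,3,4$ respectively divides $(p-1)/2$, and in a nonsplit torus exactly when it divides $(p+1)/2$. Using that each such element lies in a unique maximal torus and that a cyclic group has a unique subgroup of each order, I would show the number of involutions is $a$ if $p\equiv 1\pmod 4$ and $b$ if $p\equiv 3\pmod 4$; the number of $C_3$'s is $a$ if $p\equiv 1\pmod 3$ and $b$ if $p\equiv 2\pmod 3$; and the number of $C_4$'s is $a$ if $p\equiv 1\pmod 8$, $b$ if $p\equiv 7\pmod 8$, and $0$ if $p\equiv 3,5\pmod 8$ (no element of order $4$ exists, equivalently $S_4\not\leq G$). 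Thus $|\Solv(G)|=p^2+p+2+N_{\mathrm{inv}}+N_{C_3}+N_{C_4}$, with each summand in $\{a,b,0\}$.

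Finally I would combine these three conditions—modulo $4$, modulo $3$, and modulo $8$—by the Chinese Remainder Theorem, noting that $p\bmod 24$ determines both $p\bmod 8$ and $p\bmod 3$. Running through the eight residues coprime to $24$ and simplifying with $a+b=p^2$ and $a-b=p$ yields the eight stated formulas; for instance $p\equiv 1\pmod{24}$ gives $N_{\mathrm{inv}}=N_{C_3}=N_{C_4}=a$, hence $p^2+p+2+3a=\frac{5}{2}p^2+\frac{5}{2}p+2$, while $p\equiv 11\pmod{24}$ gives $N_{\mathrm{inv}}=N_{C_3}=b$ and $N_{C_4}=0$, hence $p^2+p+2+2b=2p^2+2$. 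The main obstacle is not any single computation but the bookkeeping of the middle two paragraphs: correctly isolating the order-$3$ and order-$4$ classes from the generic torus classes—so that the $A_4/S_4$ contributions are neither omitted nor double counted—and verifying the three congruence-dependent counts. Once these are pinned down, the eight cases are a routine enumeration.
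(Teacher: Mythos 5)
Your proposal is correct and follows essentially the same route as the paper: both rest on Lemma \ref{maximalPSL(2,p)} to identify solvabilizer classes with involutions, copies of $C_3$ and $C_4$, Sylow $p$-subgroups, and split/nonsplit maximal tori, and then sum the congruence-dependent counts over the residues of $p$ modulo $24$. Your explicit $a,b$ bookkeeping with $a+b=p^2$, $a-b=p$ is just a tidier organization of the paper's case-by-case addition, and your observation that $p\geq 11$ forces $(p\pm 1)/2\geq 5$ is exactly the reason the paper excludes $p=7$.
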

\begin{proof}
We exclude $p = 7$ as a special case because then $\frac{p-1}{2}=3$ and $\frac{p+1}{2}4$, so we overcount the solvabilizers of elements of order $3$ and $4$.  Using GAP, We found that $|\text{Solv}(\PSL(2, 7))| = 79$ in this case.  However, when $p \geq 11$, $\frac{p \pm 1}{2} \geq 5$ and so we do not have this issue, and our calculation is correct.
\\ \\
As a result of Lemma \ref{maximalPSL(2,p)}, the solvabilizers of involutions are distinct from each other. So the number of solvabilizers of involutions is the same as the number of involutions which is $\frac{p(p \pm 1)}{2}$ depending on what $p$ is modulo $4$.
\\ \\
To count the number of solvabilizers of elements of order $3$, we need to count the number of copies of $C_3$ because we showed in Lemma \ref{maximalPSL(2,p)} that if two elements $x$ and $y$ of order $3$ have the same solvabilizer, then $y=x$ or $y=x^2$.  This number is $\frac{p(p \pm 1)}{2}$ depending on what $p$ is modulo $3$.
\\ \\
In the case where $p \equiv 1$ or $7$ $\pmod 8$, the number of solvabilizers of elements of order $4$ is exactly the number of copies of $C_4$, since we proved that if $x$ and $y$ of order $4$ have the same solvabilizer, then $y=x$ or $y=x^3$.  This number is $\frac{p(p \pm 1)}{2}$, depending on what $p$ is modulo $8$.
\\ \\
The number of solvabilizers of elements of order $p$ is the number of copies of $C_p \rtimes C_{(p - 1)/2}$, which is $p + 1$.
\\ \\
To count the number of solvabilizers of elements of order dividing $p - 1$, we compute the number of copies of $C_{(p-1)/2}$ because if $x$ and $y$ have the same solvabilizers, then they are contained in the same maximal subgroups, and the intersection of these maximal subgroups is a $C_{(p-1)/2}$.  The number of all copies of $C_{(p-1)/2}$ is $\frac{p(p+1)}{2}$.
\\ \\
At the end, the number of solvabilizers of elements of order dividing $p+1$ is the number of copies of $D_{p+1}$ which is $\frac{p(p-1)}{2}$.
\\ \\
Adding all of these together gives the desired numbers stated in the theorem.
\end{proof}

We can use GAP to compute the number of solvabilizers in $\PSL(3, 3)$ as the last group in the list of minimal simple groups. The code in GAP can be found at the link in section \ref{GAPCalculation}.
\begin{remark}\label{psl(3,3)}
    $|\Solv(\PSL(3, 3))| = 1562$.
\end{remark}

\begin{corollary}
  Let $G$ and $H$ be two minimal simple groups with  $|\Solv(G)|=|\Solv(H)|$. Then $G\cong H$.
\end{corollary}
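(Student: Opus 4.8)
The plan is to prove injectivity of the map sending a minimal simple group to its number of solvabilizers. By Thompson's classification every minimal simple group is one of $\PSL(2,2^p)$, $\PSL(2,3^p)$, $\Sz(2^p)$, $\PSL(2,p)$ (with $p\equiv 2,3\pmod 5$), or $\PSL(3,3)$, and the preceding theorems supply a closed formula for $|\Solv(G)|$ in each case. So if $|\Solv(G)|=|\Solv(H)|$ it suffices to show that $G$ and $H$ lie in the same family with the same parameter. The argument therefore splits into \emph{within-family} injectivity and \emph{across-family} separation.

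First I would establish within-family injectivity. For the three exponential families the formulas $2q^2$, $\frac{4q^2-q+1}{2}$ and $\frac{3q^4+q^3-q^2+q}{2}$ are strictly increasing in $q$, so distinct parameters give distinct values and non-isomorphic groups. For $\PSL(2,p)$ the value depends on $p\bmod 24$ through the eight quadratic expressions of Theorem~\ref{psl(2,p)}, so I would check that no two of these can agree at two distinct admissible primes; each such equality, after cancelling the leading $2p^2$ or $\tfrac52p^2$, reduces to an elementary impossibility, typically that a quantity such as $p_1(p_1+1)$ cannot be a perfect square, or to an immediate parity contradiction.

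The heart of the proof is separating the families, for which I would use arithmetic invariants rather than size, since the quadratic family $\PSL(2,p)$ interleaves in value with the exponential ones. The key invariant is $2$-adic: $|\Solv(\PSL(2,2^p))|=2^{2p+1}$ is a pure power of $2$ (Theorem~\ref{solvpsl(2,2^p)}), whereas $\PSL(2,3^p)$ and four of the eight $\PSL(2,p)$ cases are odd and exceed $1$, $\Sz(2^p)$ has $2$-adic valuation exactly $p-1$ with odd part $3q^3+q^2-q+1>1$, and the even $\PSL(2,p)$ cases reduce to ``$p^2+1$'' (or a similar quadratic) being a power of $2$, which fails for $p\ge 7$; this isolates $\PSL(2,2^p)$ entirely. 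Parity then separates $\PSL(2,3^p)$ (odd) from $\Sz(2^p)$ (even), leaving only the cross comparisons of $\PSL(2,3^p)$ against the odd $\PSL(2,p)$ cases and $\Sz(2^p)$ against the even ones. The sporadic value $|\Solv(\PSL(3,3))|=1562$ (Remark~\ref{psl(3,3)}) I would dispatch by noting $v_2(1562)=1$ and solving the relevant quadratics $|\Solv(\PSL(2,p))|=1562$ over the integers.

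The step I expect to be the main obstacle is precisely this last separation between the quadratic family and the exponential families of matching parity: growth rates alone do not forbid an accidental equality, and $2$-adic valuation does not always separate (for example $\Sz(8)$ and $\PSL(2,59)$ both have valuation $2$). My plan there is to read each such equality as a quadratic equation in the $\PSL(2,p')$ parameter, which admits an integer solution only when an explicit discriminant of the form $8N+c$ — with $N$ the exponential value and $c$ a small constant — is a perfect square. For $N=\frac{4\cdot 9^{p}-3^{p}+1}{2}$ this discriminant equals $M^2-M+(4+c)$ with $M=4\cdot 3^{p}$, which lies strictly between $(M-1)^2$ and $M^2$ for all large $M$ and hence is never a square; for $\Sz(2^p)$ the leading term $12q^4$ has irrational ``square root'' $2\sqrt{3}\,q^2$, and I would bound $8N+c$ strictly between consecutive squares by the same method. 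This leaves only finitely many small parameters, which I verify directly, together with the sporadic values $|\Solv(\PSL(2,7))|=79$ and $|\Solv(\PSL(3,3))|=1562$ already recorded in the preceding results.
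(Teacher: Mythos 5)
Your overall route is the same as the paper's: the paper's entire proof is a one-line appeal to Theorems \ref{solvpsl(2,2^p)}--\ref{psl(2,p)} and Remark \ref{psl(3,3)} (``an easy computation''), and you are attempting to carry out that computation via within-family monotonicity plus cross-family separation by parity, $2$-adic valuation, and discriminants. The within-family part and several of your separations (the pure power $2^{2p+1}$ for $\PSL(2,2^p)$, $v_2 = p-1$ for $\Sz(2^p)$, parity of $\PSL(2,3^p)$, the discriminant $8N+c = M^2 - M + (4+c)$ trapped between $(M-1)^2$ and $M^2$ for the $2p^2$-type formulas) are correct. But two of your specific dispatches have genuine gaps. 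First, the ``strictly between consecutive squares'' bound only works when the discriminant has square leading coefficient; against the $\tfrac{5}{2}p^2$-type formulas the discriminant is $40N + c = 5M^2 + O(M)$, and against $\Sz(2^p)$ it is $60q^4 + O(q^3)$, and such quantities are \emph{not} trapped between consecutive squares --- they satisfy Pell equations with infinitely many integer solutions. Concretely, equating $|\Solv(\PSL(2,3^p))|$ with the $p'\equiv 23 \pmod{24}$ formula reduces to $y^2 - 5z^2 = -4$ with $z = 8\cdot 3^p - 1$, i.e., to whether $8\cdot 3^p - 1$ is a Fibonacci number; ruling this out needs a real argument (e.g., the factorization $F_n \pm 1 = F_m L_{m'}$ plus primitive divisors, or congruences mod powers of $3$), not your announced method. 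These cases can instead be killed by valuation and congruence tricks (e.g., $v_5$ of the discriminant is exactly $1$ in the $p'\equiv 1$ case; the $\Sz$ versus $p'\equiv 7$ discriminant is $\equiv 2 \pmod 3$), but your plan as written would not produce them.

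Second, your claim that the even $\PSL(2,p')$ cases against $2^{2p+1}$ ``reduce to $p^2+1$ being a power of $2$, which fails for $p\ge 7$'' is false for the $p'\equiv 7 \pmod{24}$ case, where the relevant equation is $5p'^2 + p' + 4 = 2^{2p+2}$ --- and this \emph{has} the solution $p' = 7$, $p = 3$: indeed $5\cdot 49 + 7 + 4 = 256$, so the case-(3) formula evaluated at the excluded prime $7$ equals $128 = |\Solv(\PSL(2,8))|$. The corollary survives only because Theorem \ref{psl(2,p)} excludes $p = 7$ and the true value $|\Solv(\PSL(2,7))| = 79$ is used instead (you do list $79$ among your sporadic checks, which is necessary here). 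But for primes $p' > 7$ with $p' \equiv 7 \pmod{24}$ the equation becomes a generalized Ramanujan--Nagell problem, $u^2 + 79 = 5\cdot 2^k$ with $u = 10p'+1$ (solutions include $u = 1, 9, 71$), whose resolution requires either an argument in $\QQ(\sqrt{-79})$ or a careful $2$-adic lifting analysis --- in any case far from ``immediate.'' So the skeleton of your proof matches the paper's intent, but the hard cross-family comparisons are not actually dispatched by the tools you name, and the near-collision at $p' = 7$ shows these Diophantine coincidences are real rather than hypothetical.
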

\begin{proof}
 Considering Theorems \ref{solvpsl(2,2^p)} - \ref{psl(2,p)} and Remark \ref{psl(3,3)}, we can get the result by an easy computation.
\end{proof}

\section{$|\Solv(G)|$ for some other simple groups}\label{othersimplegroups}
We extend some results in section \ref{minimalsimplegroups} to the corresponding cases where $G$ is not a minimal simple group. In fact, we find $|\Solv(G)|$ for the projective special linear groups $\PSL(2, 2^n)$ with $n \geq 2$, $\PSL(2, 3^n)$ with $n$ as an odd integer, and
$\PSL(2, p)$ for any prime number $p>7$. We also demonstrate in this section that the tables in the Appendix can be generalized to provide the whole structure of all solvabilizers of elements in the corresponding non-minimal simple groups.
\begin{theorem}\label{solvpsl(2,2^n)}
    $|\Solv(\PSL(2, q))| = 2q^2$, where $q = 2^n$ and $n \geq 2$ is an integer.
\end{theorem}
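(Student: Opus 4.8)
The plan is to reduce Theorem~\ref{solvpsl(2,2^n)} to the minimal simple case already settled in Theorem~\ref{solvpsl(2,2^p)}, by showing that replacing a prime exponent by a composite one does not alter the maximal solvable subgroups of $G=\PSL(2,q)$ with $q=2^n$. Recall from the discussion preceding Section~\ref{minimalsimplegroups} that $\Sol_G(x)$ is the union of all maximal solvable subgroups of $G$ containing $x$; hence $\Solv(G)$ is determined entirely by the maximal solvable subgroups together with their incidence with the elements of $G$. When $n$ is prime these coincided with the maximal subgroups: a single class each of Borel subgroups $C_2^n\rtimes C_{q-1}$, of $D_{2(q-1)}$, and of $D_{2(q+1)}$. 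For composite $n$, however, $G$ acquires additional maximal subgroups, namely the subfield subgroups $\PSL(2,2^d)$ with $d\mid n$ and $n/d$ prime, and these are nonsolvable. So the first thing I would check is that they introduce no new maximal solvable subgroup.

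First I would pin down the maximal solvable subgroups of $G$ directly from Dickson's classification of subgroups of $\PSL(2,q)$. In characteristic $2$ every nonidentity element of $G$ has order $2$ or an odd order dividing $q\mp 1$, so there are no elements of order $4$; consequently $C_4$ and $S_4$ never occur, and the only solvable subgroups are the cyclic $C_d$ and dihedral $D_{2d}$ with $d\mid q\pm 1$, the copies of $A_4$, and the subgroups of Borel subgroups. I would then show each lies in one of the three standard families. A cyclic or dihedral subgroup attached to the split torus is absorbed into some $D_{2(q-1)}$: for $d\geq 3$ an element of order $d$ in $C_{q-1}$ is regular, so $C_G(C_d)=C_{q-1}$ and hence $N_G(C_d)=N_G(C_{q-1})=D_{2(q-1)}$, which contains the inverting involution; the nonsplit torus and $D_{2(q+1)}$ are handled identically. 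Finally, any $A_4=C_2^2\rtimes C_3$ lies in a Borel, since its Klein four-group $V$ sits in a unique Sylow $2$-subgroup $U=C_G(V)$ and the order-$3$ complement normalizes $V$, hence normalizes $U=C_G(V)$ and lies in $N_G(U)=B$. This would establish that the maximal solvable subgroups of $G$ are exactly the Borels, the $D_{2(q-1)}$'s, and the $D_{2(q+1)}$'s, the same three families as in the minimal simple case, and with the same incidence data recorded in Table~\ref{tbl2,2p}, which depends only on $q$.

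With the maximal solvable subgroups identified, the argument of Lemma~\ref{maximalpsl(2,2^p)} applies verbatim: if $\Sol_G(x)=\Sol_G(y)$, then $x$ and $y$ are either both involutions with $x=y$, or both of order dividing $q-1$ lying in a common copy of $C_{q-1}$, or both of order dividing $q+1$ lying in a common copy of $C_{q+1}$. The count then follows exactly as in Theorem~\ref{solvpsl(2,2^p)}: the $q^2-1$ involutions give pairwise distinct solvabilizers; elements of order dividing $q-1$ contribute one solvabilizer per copy of $C_{q-1}$, namely $\frac{q(q+1)}{2}$; elements of order dividing $q+1$ contribute $\frac{q(q-1)}{2}$ through the copies of $C_{q+1}$; and the identity contributes $1$, for a total of $(q^2-1)+\frac{q(q+1)}{2}+\frac{q(q-1)}{2}+1=2q^2$.

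The main obstacle is the second step: ruling out that a subfield subgroup $\PSL(2,2^d)$ furnishes a genuinely new maximal solvable subgroup. The delicate point is that a torus can change type upon restriction — a nonsplit torus $C_{2^d+1}$ of $\PSL(2,2^d)$ may embed into the \emph{split} torus $C_{q-1}$ of $G$ (for instance $C_5\leq\PSL(2,4)$ becomes split inside $\PSL(2,16)$ since $5\mid 15$) — so I would argue through the normalizers $N_G(C_d)$ computed inside $G$ rather than through the torus type seen within the subfield subgroup, thereby confirming that every solvable subgroup of a subfield subgroup is absorbed into one of the three standard families. Once this absorption is verified, the remainder is a direct transcription of the argument for prime exponent.
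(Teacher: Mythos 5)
Your proof is correct, but it reaches the key structural fact by a genuinely different route than the paper. The paper's proof isolates this fact as Lemma~\ref{maxpsl(2,2^n)} and proves it by induction on $n$: a solvable subgroup lies in a maximal subgroup from King's list, the only new case being a subfield subgroup $\PSL(2,q_0)$, and then copies of $D_{2(q_0\pm 1)}$ are absorbed into copies of $D_{2(q\pm 1)}$ by a counting argument (there are $\tfrac{q(q^2-1)}{2d}$ copies of $D_{2d}$, each copy of $D_{2(q-1)}$ contains $\tfrac{q-1}{d}$ of them, and each $D_{2d}$ lies in at most one $D_{2(q-1)}$ by a normalizer/maximality argument), with Borels absorbed into Borels via King's Theorem~2.1(m). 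You instead classify the solvable subgroups of $G$ directly from Dickson's theorem, with no induction: in characteristic $2$ there are no elements of order $4$, so the solvable subgroups are cyclic, dihedral, $A_4$, and subgroups of Borels, and you absorb each type by explicit normalizer computations --- $N_G(C_d)=N_G(C_G(C_d))=N_G(C_{q\mp 1})=D_{2(q\mp 1)}$ for $d\geq 3$, and $A_4\leq N_G(U)=B$ via the TI property of the elementary abelian Sylow $2$-subgroups. Your route buys two things: it sidesteps the split/nonsplit torus bookkeeping that the paper's induction must handle (its ``$q_0\pm 1$ divides $q+1$ or $q-1$, respectively'' followed by a WLOG is exactly the subtlety you flag with the $C_5\leq \PSL(2,4)\leq \PSL(2,16)$ example), and your normalizer argument for absorbing $D_{2d}$ is cleaner than the paper's count. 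What the paper's induction buys is that it transfers nearly unchanged to $\PSL(2,3^n)$ (Lemma~\ref{maxspl(2,3^n)}), where your characteristic-$2$ shortcuts (no order-$4$ elements, abelian TI Sylows as full involution centralizers) are unavailable. Both proofs then finish identically: the incidence data of Table~\ref{tbl2,2p} persists for composite $n$, Lemma~\ref{maximalpsl(2,2^p)} applies verbatim, and the count $(q^2-1)+\tfrac{q(q+1)}{2}+\tfrac{q(q-1)}{2}+1=2q^2$ follows as in Theorem~\ref{solvpsl(2,2^p)}; your assertion that the table ``depends only on $q$'' is at the same level of detail as the paper's own ``we can form the same table,'' though a one-line justification (an element of odd order $d\mid q-1$ lies in the unique torus $C_G(x)\cong C_{q-1}$, which lies in exactly two Borels and one $D_{2(q-1)}$) would make both arguments airtight.
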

To prove Theorem \ref{solvpsl(2,2^n)}, we first get the following result.
    \begin{lemma}\label{maxpsl(2,2^n)}
     Let $G=\PSL(2, q)$ where $q = 2^n$ and $n \geq 2$ is an integer. Then given element $x$ in $G$,  $\Sol_G(x)$ is the union of the copies of $C_2^n \rtimes C_{q-1}$, $D_{2(q-1)}$, or $D_{2(q+1)}$ containing $x$.  Specifically, every solvable subgroup is contained in some copies of $C_2^n \rtimes C_{q-1}$, $D_{2(q-1)}$, and $D_{2(q+1)}$.
    \end{lemma}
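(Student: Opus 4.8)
The plan is to reduce the claim to a statement purely about the \emph{solvable} subgroups of $G$. Recall from the discussion preceding the lemma that $\Sol_G(x)=\bigcup_H H$, where the union ranges over the maximal solvable subgroups $H$ of $G$ containing $x$. Since each of $C_2^n\rtimes C_{q-1}$, $D_{2(q-1)}$, and $D_{2(q+1)}$ is visibly solvable, the entire lemma follows once I show that \emph{every} solvable subgroup of $G$ is contained in a copy of one of these three families: the union formula is then immediate, and these three families are exactly the maximal solvable subgroups. This is precisely where the non-minimal range $q=2^n$ (with $n$ composite) departs from the minimal case treated earlier: here $G$ has nonsolvable proper subgroups, namely the subfield subgroups $\PSL(2,2^m)$ with $m\mid n$, so one can no longer take $\Sol_G(x)$ to be a union of maximal subgroups. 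The real content is that these subfield subgroups contribute no new maximal solvable subgroups, because each of their solvable subgroups already sits inside a Borel or a torus normalizer of the ambient $G$.

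First I would record the structural facts about $G=\PSL(2,2^n)$ that drive the argument, citing the classification of subgroups of $\PSL(2,q)$ in \cite{King}. The Sylow $2$-subgroups are elementary abelian of order $q$, they form a trivial-intersection set, and each involution $v$ lies in a unique Sylow $2$-subgroup, namely its own centralizer $C_G(v)=P$; consequently $C_G(V)=P$ for any nontrivial $2$-subgroup $V\le P$. The two classes of maximal tori are the cyclic groups $C_{q-1}$ and $C_{q+1}$, both of odd order since $q$ is even, each self-centralizing, with normalizers $N_G(C_{q-1})=D_{2(q-1)}$ and $N_G(C_{q+1})=D_{2(q+1)}$. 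I would also note that an odd-order subgroup of $G$ is cyclic and lies in a maximal torus, and that the three listed families are pairwise incomparable by order, so that once every solvable subgroup is placed inside one of them they are automatically the maximal solvable subgroups.

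The main step is a dichotomy on the $2$-core $O_2(H)$ of an arbitrary solvable subgroup $H\le G$. If $O_2(H)\neq 1$, I would set $Q=O_2(H)$, observe that the nontrivial $2$-group $Q$ lies in a unique Sylow $2$-subgroup $P$ by the trivial-intersection property, and conclude that since $H$ normalizes $Q$ while $P$ is the only Sylow $2$-subgroup containing $Q$, the group $H$ normalizes $P$ as well; hence $H\le N_G(P)=C_2^n\rtimes C_{q-1}$, a Borel subgroup. If $O_2(H)=1$ and $H\neq 1$, then the Fitting subgroup $F(H)$ has odd order and is a nontrivial cyclic group $C_d$ with $d\mid q\pm1$ and $d\ge 3$; its generator then has centralizer equal to the unique maximal torus $T$ containing $F(H)$, so that $H\le N_G(F(H))\le N_G(C_G(F(H)))=N_G(T)$, which is $D_{2(q-1)}$ or $D_{2(q+1)}$. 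This covers every solvable subgroup.

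The step I expect to be the main obstacle is handling the ``exotic'' solvable subgroups that appear only in the non-minimal range, chiefly the copies of $A_4$, which exist precisely when $n$ is even (equivalently $3\mid q-1$) and which do not occur inside the dihedral or torus families. These are exactly the subgroups with nontrivial $2$-core that are not obviously contained in a Borel, and the decisive tool is the self-centralizing/trivial-intersection property of the Sylow $2$-subgroups: since $O_2(A_4)$ is the Klein four-group $V$ with $C_G(V)=P$, any element normalizing $V$ must normalize $P$, forcing $A_4\le N_G(P)$. The identical mechanism subsumes all Borel-type subgroups $C_2^m\rtimes C_t$. I would finish by remarking that $S_4$ does not occur for $q=2^n$ since $q\not\equiv\pm1\pmod 8$, so that the dichotomy above genuinely exhausts the solvable subgroups of $G$, completing the proof.
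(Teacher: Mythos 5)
Your proof is correct, but it takes a genuinely different route from the paper. The paper proceeds by induction on $n$: the base case $n$ prime is the minimal simple case already settled via the tables, and for composite $n$ it invokes King's classification \cite[Corollary~2.2]{King} to place any solvable $H$ inside a maximal subgroup, with the only new case being the subfield subgroups $\PSL(2,q_0)$; the inductive hypothesis puts $H$ inside a copy of $C_2^{n_0}\rtimes C_{q_0-1}$, $D_{2(q_0\pm1)}$, and a counting/normalizer argument then shows each $D_{2(q_0\pm1)}$ lies in exactly one $D_{2(q\pm1)}$ and each small Borel in a big one. You instead bypass the subfield subgroups and induction entirely with a local-analysis dichotomy on $O_2(H)$: if $O_2(H)\neq 1$, the trivial-intersection property of the elementary abelian, self-centralizing Sylow $2$-subgroups forces $H\le N_G(P)=C_2^n\rtimes C_{q-1}$; if $O_2(H)=1$, then $F(H)$ is a nontrivial odd cyclic group inside a self-centralizing torus $T$, and $H\le N_G(F(H))\le N_G(C_G(F(H)))=N_G(T)=D_{2(q\pm1)}$. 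This is complete (the facts you use are standard for $q$ even and available in \cite{King} or Dickson's theorem), it correctly identifies the $A_4$'s for $n$ even as the content of the first case, and it is arguably cleaner in that it needs no counting of dihedral overgroups. What the paper's approach buys in exchange is uniformity: the same inductive template transfers verbatim to $\PSL(2,3^n)$ (Lemma~\ref{maxspl(2,3^n)}) and to $\PSL(2,p)$, whereas your argument leans on characteristic-$2$ features (odd tori, abelian TI Sylow $2$-subgroups) and would need real modification in odd characteristic, where $A_4$ genuinely survives in the list and centralizers of involutions are dihedral rather than unipotent. Two small nitpicks: your remark that the three families are ``pairwise incomparable by order'' is not quite right as an argument (e.g.\ $2(q-1)$ divides $q(q-1)$), though maximality is not actually needed for the lemma --- containment of every solvable subgroup in one of the three families already yields $\Sol_G(x)=\bigcup H$; and you should cite explicitly the facts that odd-order subgroups are cyclic with self-centralizing torus closure, since they carry the weight of your second case.
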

    \begin{proof}
        We show this by induction on $n$.  This is true when $n$ is a prime.  When $n$ is not a prime, then $\Sol_G(x) = \bigcup_{x\in H} H$ where $H$ is a solvable subgroup of $G$ containing $x$.  We will show that every $H$ is contained in some copies of $C_2^n \rtimes C_{q-1}$, $D_{2(q-1)}$, or $D_{2(q+1)}$. To see this, by \cite[Corollary 2.2]{King} any such $H$ must be contained in a maximal subgroup isomorphic to $C_2^n \rtimes C_{q-1}, D_{2(q-1)}, D_{2(q+1)}$, or $\PSL(2, q_0)$ where $q$ is a prime power of $q_0$.  Suppose $H \leq \PSL(2, q_0)$, where $q_0 = 2^{n_0}$ for some integer $n_0$.  Then by the inductive hypothesis, $H$ is contained in some copy of $C_2^{n_0} \rtimes C_{q_0-1}$, $D_{2(q_0-1)}$, or $D_{2(q_0+1)}$.
        \\ \\
        We will show that every copy of $D_{2(q_0 \pm 1)}$ is contained in a copy of $D_{2(q \pm 1)}$. Since $q$ is a prime power of $q_0$, $q_0 \pm 1$ divides $q + 1$ or $q - 1$, respectively.  So let $q_0 \pm 1 = d$, where we assume without loss of generality that $d$ divides $q - 1$ because we can employ a similar argument to the $q + 1$ case.  Then it is seen from \cite[Theorem 2.1]{King} that there are $\frac{q(q^2 - 1)}{2d}$ copies of $D_{2d}$, and $\frac{q(q^2 - 1)}{2(q - 1)}$ copies of $D_{2(q - 1)}$.  Each copy of $D_{2(q - 1)}$ contains $\frac{q - 1}{d}$ copies of $D_{2d}$, and each copy of $D_{2d}$ is contained in at most one copy of $D_{2(q - 1)}$ since otherwise the normalizer of the $C_d$ will be a subgroup which strictly contains the $D_{2(q-1)}$.  As $D_{2(q-1)}$ is a maximal subgroup of $G$, we obtain that $C_d$ is normal which is a contradiction.  So every copy of $D_{2d}$ is contained in exactly one copy of $D_{2(q \pm 1)}$, as desired.
        \\ \\ 
        Finally, every copy of $C_2^{n_0} \rtimes C_{q_0-1}$ is contained in a copy of $C_2^n \rtimes C_{q - 1}$ by \cite[Theorem 2.1 (m)]{King} which completes the proof.
    \end{proof}
    
    \begin{proof}[Proof of Theorem \ref{solvpsl(2,2^n)}]
           In view of Lemma \ref{maxpsl(2,2^n)}, we can form the same table as Table \ref{tbl2,2p}. in the Appendix for the solvabilizers of elements in $\PSL(2, 2^n)$. So our calculation is similar and we once again obtain $2q^2$.        
    \end{proof}

\begin{theorem}\label{solvpsl(2,3^n)}
$|\Solv(\PSL(2, q))| = \frac{4q^2-q+1}{2}$, where $q=3^n$ for an odd integer $n$.
\end{theorem}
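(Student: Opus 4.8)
The plan is to follow the pattern of Theorem~\ref{solvpsl(2,2^n)}: establish that for $q=3^n$ with $n$ odd the solvabilizer of every element of $G=\PSL(2,q)$ is a union of copies of the same four subgroup types that appear in the minimal simple case $\PSL(2,3^p)$, and then transfer the counting verbatim.

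The first step is to prove the analogue of Lemma~\ref{maxpsl(2,2^n)}: for $G=\PSL(2,q)$ with $q=3^n$, $n$ odd, every solvable subgroup of $G$ lies in a copy of $C_3^n\rtimes C_{(q-1)/2}$, $D_{q-1}$, $D_{q+1}$, or $A_4$, so that $\Sol_G(x)$ is the union of the copies of these four types containing $x$. I would argue by strong induction on $n$. The base case $n$ prime is the minimal simple group, where these four types are exactly the maximal subgroups recorded in Table~\ref{tbl2,3p}. For composite (odd) $n$, King's classification \cite[Corollary 2.2]{King} puts any solvable $H\ni x$ inside a maximal subgroup which is either one of the four listed types or a subfield subgroup $\PSL(2,q_0)$ with $q=q_0^r$ and $r$ a prime dividing $n$. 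The inductive hypothesis (note $n_0=n/r$ is again odd and at least $3$) then places $H$ inside a copy of $C_3^{n_0}\rtimes C_{(q_0-1)/2}$, $D_{q_0-1}$, $D_{q_0+1}$, or $A_4$ of $\PSL(2,q_0)$, and it remains to embed each of these into the corresponding type of $G$.

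The heart of the argument, and where oddness of $n$ is indispensable, is this embedding step. First, oddness guarantees that only the four listed types (and subfields) occur at all: $q=3^n\equiv 3\pmod 8$ excludes $S_4$, while $3^n\not\equiv\pm1\pmod 5$ for odd $n$ excludes $A_5$, and the absence of an even-degree subfield extension ($q\neq q_0^2$) excludes $\operatorname{PGL}(2,q_0)$. Second, since $r$ is odd we have $q_0-1\mid q-1$ and $q_0+1\mid q+1$ with \emph{odd} cofactors, whence $C_{(q_0\mp1)/2}\le C_{(q\mp1)/2}$; the counting-and-normalizer argument of Lemma~\ref{maxpsl(2,2^n)} then shows each copy of $D_{q_0\pm1}$ sits inside a unique copy of $D_{q\pm1}$. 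The Borel embedding is \cite[Theorem 2.1 (m)]{King}, and an $A_4$ is already of the required type. I expect this dihedral nesting to be the main obstacle: for even $n$ the cofactor $(q+1)/(q_0+1)$ can be even and the nesting fails, which is exactly why the hypothesis restricts to odd $n$.

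Finally, with the structure lemma in hand, the incidence data of Table~\ref{tbl2,3p}---the number of copies of each of the four types meeting an element of each order---is reproduced for all odd $n$, since, just as in the $\PSL(2,2^n)$ case, these counts are forced by the totals of cyclic subgroups and involutions in \cite[Theorem 2.1]{King}, which are valid for every $q$ rather than only for prime exponent. Hence the argument of Lemma~\ref{maximalpsl(2,3^p)} carries over unchanged, so $\Sol_G(x)=\Sol_G(y)$ still forces $x$ and $y$ into the same maximal subgroups, and the tally of the minimal simple computation for $\PSL(2,3^p)$ applies term by term: $\frac{q(q-1)}{2}$ solvabilizers from involutions (using $q\equiv 3\pmod 4$), $\frac{q^2-1}{2}$ from elements of order $3$, $\frac{q(q+1)}{2}$ from those of order dividing $q-1$, $\frac{q(q-1)}{2}$ from those of order dividing $q+1$, and $1$ for the identity, summing to $|\Solv(G)|=\frac{4q^2-q+1}{2}$.
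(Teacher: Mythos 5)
Your proposal is correct and takes essentially the same route as the paper: an inductive structure lemma (the paper's Lemma~\ref{maxspl(2,3^n)}) via King's classification \cite[Corollary 2.2]{King}, with the dihedral nesting $D_{q_0\pm 1}\le D_{q\pm 1}$ and the Borel embedding \cite[Theorem 2.1 (m)]{King}, followed by transferring the counts of Table~\ref{tbl2,3p} term by term as in the $\PSL(2,3^p)$ case. Your explicit check that oddness of $n$ rules out $S_4$, $A_5$, and $\operatorname{PGL}(2,q_0)$ subgroups is a useful detail the paper leaves implicit, but the argument is the same.
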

First, the following result should be proven.
\begin{lemma}\label{maxspl(2,3^n)}
        Let $G=\PSL(2, q)$, where $q=3^n$ for an odd integer $n$. Then given element $x$ in $G$, $\Sol_G(x)$ is the union of the copies of $C_3^n \rtimes C_{(q-1)/2}$, $D_{(q-1)}$, $D_{(q+1)}$,  and $A_4$ containing $x$.  Specifically, every solvable subgroup is contained in some copies of $C_3^n \rtimes C_{(q-1)/2}$, $D_{(q-1)}$, $D_{(q+1)}$, or $A_4$.
    \end{lemma}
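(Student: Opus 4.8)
The plan is to follow the template of Lemma~\ref{maxpsl(2,2^n)} almost verbatim, carrying out an induction on the odd integer $n$. The base case $n$ prime is exactly the minimal simple group $\PSL(2,3^p)$, whose solvabilizers are described in Table~\ref{tbl2,3p} (see also \cite[Theorem 3.2]{tables}) as unions of copies of precisely $C_3^n \rtimes C_{(q-1)/2}$, $D_{q-1}$, $D_{q+1}$, and $A_4$. For composite odd $n$, I would again write $\Sol_G(x) = \bigcup_{x \in H} H$ over solvable subgroups $H$ containing $x$, and reduce to showing that every such $H$ sits inside a copy of one of the four listed types. By \cite[Corollary 2.2]{King}, $H$ lies in a maximal subgroup $M$; the maximal subgroups of $\PSL(2,3^n)$ are the Borel $C_3^n \rtimes C_{(q-1)/2}$, the torus normalizers $D_{q-1}$ and $D_{q+1}$, the exceptional groups among $A_4,S_4,A_5$, and the subfield subgroups $\PSL(2,3^{n_0})$ with $n/n_0$ prime. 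The first observation, where the hypothesis that $n$ is odd enters, is that $S_4$ and $A_5$ do not occur: since $3^n \equiv 3 \pmod 8$ for odd $n$, we have $q \not\equiv \pm 1 \pmod 8$, ruling out $S_4$; and $3^n \not\equiv \pm 1 \pmod 5$ for odd $n$, ruling out $A_5$. Hence the only solvable maximal subgroups are the four listed types, and the only nonsolvable ones are the subfields.

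If $M$ is one of the four solvable types, we are immediately done. If instead $M \cong \PSL(2,3^{n_0})$, then because $n/n_0$ is an odd prime and $n$ is odd, $n_0$ is again odd, so the inductive hypothesis applies to $M$ and places the solvable subgroup $H \leq M$ inside a copy of $C_3^{n_0} \rtimes C_{(q_0-1)/2}$, $D_{q_0-1}$, $D_{q_0+1}$, or $A_4$, where $q_0 = 3^{n_0}$. It then remains to show that each of these ``small'' subgroups is contained in a ``large'' subgroup of $G$ of the corresponding type. A copy of $A_4$ inside $M$ is already a copy of $A_4$ in $G$, so that case is trivial. For the torus normalizers I would copy the counting argument of Lemma~\ref{maxpsl(2,2^n)}: writing $q = q_0^r$ with $r = n/n_0$ odd gives $q_0 - 1 \mid q - 1$ and, crucially using the oddness of $r$, also $q_0 + 1 \mid q + 1$; one then uses \cite[Theorem 2.1]{King} to count copies of $D_{q_0 \pm 1}$ and $D_{q \pm 1}$ and the maximality of $D_{q \pm 1}$ (so that the underlying cyclic torus is not normal) to conclude that each $D_{q_0 \pm 1}$ lies in a unique $D_{q \pm 1}$. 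Finally, the containment $C_3^{n_0} \rtimes C_{(q_0-1)/2} \leq C_3^n \rtimes C_{(q-1)/2}$ follows from \cite[Theorem 2.1]{King}, once one checks $(q_0-1)/2 \mid (q-1)/2$; this is again where oddness is used, since $(q-1)/(q_0-1) = 1 + q_0 + \cdots + q_0^{r-1}$ is a sum of $r$ odd terms and hence odd.

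With the structural claim in hand, the set equality follows formally as in the earlier lemmas: each listed type is solvable, so any copy containing $x$ lies in $\Sol_G(x)$; conversely, for $z \in \Sol_G(x)$ the group $\langle x, z\rangle$ is a solvable subgroup containing $x$, hence lies in a copy of one of the four types, which therefore contains both $x$ and $z$. I expect the main obstacle to be the dihedral step: one must verify the counting of copies of $D_{q_0 \pm 1}$ against $D_{q \pm 1}$ and the normalizer argument carefully, keeping track of the orders $q \pm 1$ (rather than $2(q \pm 1)$ as in the characteristic-two case), and making sure every divisibility and parity fact used genuinely follows from $n$ being odd. The exclusion of $S_4$ and $A_5$ and the embedding of the tori all hinge on this parity, so the bookkeeping, rather than any single hard idea, is the delicate point.
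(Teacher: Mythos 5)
Your proposal is correct and takes essentially the same route as the paper's proof: induction on odd $n$, reduction via King's Corollary 2.2 to maximal subgroups, the inductive hypothesis applied to subfield subgroups $\PSL(2,3^{n_0})$, the counting-plus-normalizer argument placing each $D_{q_0\pm 1}$ in a unique $D_{q\pm 1}$, and King's Theorem 2.1(m) for the Borel embedding. Your explicit exclusion of $S_4$ and $A_5$ (via $3^n\equiv 3 \pmod 8$ and $3^n\equiv 2,3\pmod 5$ for odd $n$) and the parity checks such as $q_0+1\mid q+1$ are details the paper leaves implicit, but they only make the same argument more careful, not different.
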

    \begin{proof}
        We show this by induction on $n$.  This is true when $n$ is a prime.  When $n$ is not a prime, then $\Sol_G(x) = \bigcup_{x\in H} H$ where $H$ is a solvable subgroup of $G$ containing $x$.  We will show that every $H$ is contained in some copies of $C_3^n \rtimes C_{(q-1)/2}$, $D_{(q-1)}$, $D_{(q+1)}$, or $A_4$.  To see this, by \cite[Corollary 2.2]{King} any such $H$ must be contained in a maximal subgroup isomorphic to $C_3^n \rtimes C_{(q-1)/2}, D_{(q-1)}, D_{(q+1)}, A_4$, or $\PSL(2, q_0)$ where $q$ is a prime power of $q_0$.  Suppose $H \leq \PSL(2, q_0)$, where $q_0 = 3^{n_0}$ for some integer $n_0$.  Then by the inductive hypothesis, $H$ is contained in some copy of $C_3^{n_0} \rtimes C_{(q_{0}-1)/2}$, $D_{(q_0-1)}$, $D_{(q_0+1)}$, or $A_4$.
        \\ \\
        By a similar argument to Lemma \ref{maxpsl(2,2^n)}, we can see that every copy of $D_{(q_0 \pm 1)}$ is contained in a copy of $D_{(q \pm 1)}$.  Then by \cite[Theorem 2.1]{King} there are $\frac{q(q^2 - 1)}{2d}$ copies of $D_{d}$, and $\frac{q(q^2 - 1)}{2(q - 1)}$ copies of $D_{(q - 1)}$.  Each copy of $D_{(q - 1)}$ contains $\frac{q - 1}{d}$ copies of $D_{d}$, and each copy of $D_{2d}$ is contained in at most one copy of $D_{(q - 1)}$ since otherwise the normalizer of the $C_{d/2}$ will be a subgroup which strictly contains the $D_{q-1}$.  Since $D_{q-1}$ is maximal in $G$, we obtain that $C_{d/2}$ is normal, a contradiction.  So every copy of $D_{d}$ is contained in exactly one copy of $D_{(q \pm 1)}$, as desired.
        \\ \\ 
        Finally, every copy of $C_3^{n_0} \rtimes C_{(q_{0}-1)/2}$ is contained in a copy of $C_3^n \rtimes C_{(q-1)/2}$ by \cite[Theorem 2.1 (m)]{King}. So the lemma is proven.
    \end{proof}

   \begin{proof}[Proof of Theorem \ref{solvpsl(2,3^n)}]
       By applying Lemma \ref{maxspl(2,3^n)}, we can tabulate the same solvabilizer structures for $\PSL(2, 3^n)$ as Table \ref{tbl2,3p} in the Appendix for $\PSL(2, 3^p)$. Therefore, a similar computation implies a result as desired.
   \end{proof}

\begin{theorem}\label{solvpsl(2,p)}
Let $G=\PSL(2,p)$ where $p>7$ is a prime. Then $|\Solv(\PSL(2, p))|$ has the following values:
    \begin{itemize}
        \item[{\rm (1)}] $\frac{5}{2}p^2 + \frac{5}{2}p + 2$ when $p\equiv 1\pmod{24}$,
        \item[{\rm (2)}] $2p^2 + p + 2$ when $p\equiv 5\pmod{24}$,
        \item[{\rm (3)}] $\frac{5}{2}p^2 + \frac{1}{2}p + 2$ when $p\equiv 7\pmod{24}$,
        \item[{\rm (4)}] $2p^2 + 2$ when $p\equiv 11\pmod{24}$,
        \item[{\rm (5)}] $2p^2 + 2p + 2$ when $p\equiv 13\pmod{24}$, 
        \item[{\rm (6)}] $\frac{5}{2}p^2 + \frac{3}{2}p + 2$ when $p\equiv 17\pmod{24}$,
        \item[{\rm (7)}] $2p^2 + p + 2$ when $p\equiv 19\pmod{24}$,
        \item[{\rm (8)}] $\frac{5}{2}p^2 - \frac{1}{2}p + 2$ when $p\equiv 23\pmod{24}$. 
    \end{itemize}
(Note that we now have these formulas for all primes, not just when $p \equiv 2, 3 \pmod 5$.)
\end{theorem}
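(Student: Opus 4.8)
The plan is to reduce Theorem \ref{solvpsl(2,p)} to the minimal simple case already settled in Theorem \ref{psl(2,p)}, in the same spirit that Theorems \ref{solvpsl(2,2^n)} and \ref{solvpsl(2,3^n)} were reduced to their prime-degree counterparts via Lemmas \ref{maxpsl(2,2^n)} and \ref{maxspl(2,3^n)}. The eight formulas of Theorem \ref{psl(2,p)} were proved under the extra hypothesis $p\equiv 2,3\pmod 5$, which is exactly the condition making $\PSL(2,p)$ a minimal simple group. When instead $p\equiv \pm 1\pmod 5$, equivalently $5\mid p^2-1$, the group $G=\PSL(2,p)$ is no longer minimal simple: by the subgroup classification of \cite{King}, its maximal subgroups are the Borel subgroup $C_p\rtimes C_{(p-1)/2}$, the dihedral groups $D_{p-1}$ and $D_{p+1}$, possibly $A_4$ and $S_4$, and now in addition copies of $A_5$. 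Since $A_5$ is the only non-solvable maximal subgroup, the appearance of these $A_5$'s is the sole difference from the minimal simple case. The target is therefore an analogue of Lemmas \ref{maxpsl(2,2^n)} and \ref{maxspl(2,3^n)}: every solvable subgroup of $G$ is contained in a copy of one of the standard types $C_p\rtimes C_{(p-1)/2}$, $D_{p-1}$, $D_{p+1}$, $A_4$, or $S_4$.

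First I would take a solvable subgroup $H\leq G$ and, via the classification of \cite{King}, place it inside a maximal subgroup $M$. If $M$ is one of the solvable types there is nothing to prove, so the only new case is $M\cong A_5$. Then $H$ is a proper solvable subgroup of $A_5$, hence contained in one of the maximal (and solvable) subgroups $A_4$, $D_{10}=N_{A_5}(C_5)$, or $S_3\cong D_6=N_{A_5}(C_3)$ of $A_5$. I would embed each of these into a standard type of $G$. A copy of $A_4$ is already a standard type (sitting inside $S_4$ when $p\equiv\pm1\pmod 8$). For $D_{10}$, since $5\mid p^2-1$ the group $C_5$ lies in a cyclic subgroup of order $\frac{p-1}{2}$ or $\frac{p+1}{2}$; as the centralizer of $C_5$ is this cyclic torus and its normalizer in $G$ is dihedral, one gets $D_{10}\leq N_G(C_5)=D_{p-1}$ or $D_{p+1}$ according as $5\mid p-1$ or $5\mid p+1$. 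The same normalizer argument applied to $S_3=N_{A_5}(C_3)\leq N_G(C_3)=D_{p\pm1}$ settles the last case. Thus every solvable subgroup, and in particular every maximal solvable subgroup, is of a standard type, so the maximal solvable subgroups of $G$ are exactly those occurring in the minimal simple case.

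With this lemma, $\Sol_G(x)$ is the union of the standard maximal subgroups containing $x$, with precisely the structure recorded in Tables \ref{tbl2,p,1}--\ref{tbl2,p,23}; hence the arguments of Lemma \ref{maximalPSL(2,p)} (two elements share a solvabilizer if and only if they lie in a common cyclic subgroup of the appropriate type) carry over verbatim. The key point for the count is that the new elements of order $5$ appearing when $5\mid p^2-1$ have order dividing $p-1$ or $p+1$, so they are absorbed into the ``order dividing $p\mp1$'' cases and introduce no new solvabilizer classes. Consequently the number of distinct solvabilizers in each order class is still governed by the number of copies of $C_{(p-1)/2}$, $C_{(p+1)/2}$, $C_3$, $C_4$, $C_p\rtimes C_{(p-1)/2}$, $D_{p-1}$, and $D_{p+1}$; these are given by King's formulas as functions of $p$ alone, independent of $p\bmod 5$, so summing them reproduces exactly the eight expressions of Theorem \ref{psl(2,p)}, organized by $p\bmod 24$.

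The hard part will be the $A_5$ step: one must verify cleanly that no solvable subgroup inside an $A_5$ escapes the standard types, and in particular that $N_G(C_3)$ and $N_G(C_5)$ are genuinely dihedral of the stated orders (using that the centralizer of a nontrivial semisimple element is its containing torus for $p>7$), so that $S_3$ and $D_{10}$ are captured by $D_{p\pm1}$ rather than spawning a new maximal solvable subgroup. A secondary point, inherited from Theorem \ref{psl(2,p)}, is to keep $p>7$ so that $\frac{p\pm1}{2}\geq 5$ and the small coincidences among elements of orders $2,3,4,5$ do not cause overcounting.
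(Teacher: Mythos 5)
Your proposal follows essentially the same route as the paper: it proves the analogue of Lemmas \ref{maxpsl(2,2^n)} and \ref{maxspl(2,3^n)} (here Lemma \ref{maxpsl(2,p)}) by noting that the only new maximal subgroups for $p\equiv\pm 1\pmod 5$ are copies of $A_5$, reducing any solvable subgroup of such an $A_5$ to one of $A_4$, $D_{10}$, or $S_3$, and absorbing these into the standard types $S_4$ (or maximal $A_4$) and $D_{p\pm 1}$, after which the tables and the count of Theorem \ref{psl(2,p)} carry over unchanged. Your normalizer argument $D_{10}\leq N_G(C_5)=D_{p\pm 1}$ and $S_3\leq N_G(C_3)=D_{p\pm 1}$ is just a direct form of the dihedral-containment argument the paper imports from Lemma \ref{maxspl(2,3^n)}, so the two proofs coincide in substance.
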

    \begin{lemma}\label{maxpsl(2,p)}
      Let $G=\PSL(2,p)$ where $p>7$ is a prime. For an element $x$ in $G$,  $\Sol_G(x)$ is the union of the copies of $C_p \rtimes C_{\frac{p-1}{2}}$, $A_4$ or $S_4$ (depending on $p$ mod $8$), and $D_{p\pm 1}$ containing $x$.  Specifically, every solvable subgroup is contained in a copy of $C_p \rtimes C_{\frac{p-1}{2}}$, $A_4$ or $S_4$ (depending on $p$ mod $8$), or $D_{p\pm 1}$.
    \end{lemma}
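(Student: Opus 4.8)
The plan is to follow the same template as Lemmas~\ref{maxpsl(2,2^n)} and \ref{maxspl(2,3^n)}: reduce an arbitrary solvable subgroup to a maximal subgroup, and show that the only maximal subgroup that is \emph{not} already of allowed type contributes no new solvable subgroups. The key simplification over those two lemmas is that $p$ is prime, so $\FF_p$ has no proper subfield and $G=\PSL(2,p)$ has \emph{no} subfield subgroups $\PSL(2,q_0)$; hence there is no induction to run. By King's classification \cite[Corollary 2.2]{King} (see also \cite[Theorem 2.1]{King}), every maximal subgroup of $G$ is one of: the Borel subgroup $C_p\rtimes C_{(p-1)/2}$, the dihedral groups $D_{p-1}$ and $D_{p+1}$, or one of the exceptional subgroups $A_4$, $S_4$ (present exactly when $p\equiv\pm1\pmod 8$), and $A_5$ (present exactly when $p\equiv\pm1\pmod5$). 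All of these except $A_5$ are solvable and of the type asserted in the lemma.

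First I would take any solvable $H\subgrp G$ and a maximal subgroup $M\supseteq H$. If $M\not\cong A_5$, then $M$ is itself solvable and of allowed type, so $H\subgrp M$ and we are done. The whole content of the lemma is therefore the case $M\cong A_5$, which arises precisely when $p\equiv\pm1\pmod5$ --- exactly the non-minimal-simple situation that Lemma~\ref{maximalPSL(2,p)} and the Appendix tables did not address.

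Next I would analyze $A_5$ internally. Since $A_5$ is nonsolvable, $H$ is proper and hence lies in a maximal subgroup of $A_5$, i.e.\ in a copy of $A_4$, $D_{10}$, or $D_6\cong S_3$; it suffices to place each of these inside an allowed-type subgroup of $G$. For $D_{10}$ (resp.\ $D_6$), its cyclic subgroup $\grpgen{g}\cong C_5$ (resp.\ $C_3$) lies in a unique maximal torus $T$ of $G$ of order $(p-1)/2$ or $(p+1)/2$, the choice being dictated by $p\bmod 5$ (resp.\ $p\bmod 3$). Because the centralizer in $G$ of the semisimple element $g$ is exactly $T$, any element normalizing $\grpgen{g}$ sends $g$ to $g$ or $g^{-1}$ and therefore normalizes $C_G(g)=C_G(g^{-1})=T$; so the whole dihedral group normalizes $T$ and lies in $N_G(T)=D_{p\pm1}$. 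For $A_4$, it normalizes its own Klein four-group $V_4$, and $N_G(V_4)$ equals $A_4$ when $p\equiv\pm3\pmod8$ and $S_4$ when $p\equiv\pm1\pmod8$; in either case $A_4\subgrp N_G(V_4)$ is of allowed type. This establishes that every solvable subgroup of $G$ --- including those visible only inside $A_5$ --- sits in a copy of $C_p\rtimes C_{(p-1)/2}$, $A_4$ or $S_4$, or $D_{p\pm1}$. The displayed description of $\Sol_G(x)$ then follows, since each allowed type is solvable, so the two unions agree.

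The step I expect to be the main obstacle is the $A_5$ analysis, and within it the verification that the dihedral subgroups $D_{10},D_6\subgrp A_5$ genuinely land inside the dihedral normalizers $D_{p\pm1}$ of $G$ rather than generating a new maximal solvable subgroup. The crux is the normalizer-of-torus argument (normalizing $\grpgen{g}$ forces normalizing $C_G(g)=T$), together with the bookkeeping of whether a given order-$3$ or order-$5$ element lies in the split torus $C_{(p-1)/2}$ or the nonsplit torus $C_{(p+1)/2}$, governed by $p\bmod3$ and $p\bmod5$. These congruence details are routine given the tables, but they must be recorded so that the counting in Theorem~\ref{solvpsl(2,p)} can reuse exactly the solvabilizer table structure of the minimal simple case.
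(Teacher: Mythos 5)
Your proposal is correct, and at the macro level it follows the paper's own skeleton: invoke King's classification of maximal subgroups, observe that all listed types except $A_5$ are already allowed (your remark that primality of $p$ kills the subfield subgroups, so no induction is needed, is exactly why this lemma is simpler than Lemmas~\ref{maxpsl(2,2^n)} and~\ref{maxspl(2,3^n)}), and reduce everything to showing that solvable subgroups of a maximal $A_5$ (present when $p\equiv\pm1\pmod 5$, equivalently $p\equiv\pm1\pmod{10}$ for odd $p$, which is the congruence the paper uses) land in copies of $S_3$, $D_{10}$, or $A_4$ and thence in allowed-type subgroups of $G$. Where you genuinely diverge is the embedding mechanism. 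The paper treats $D_6$ and $D_{10}$ by recycling the counting argument of Lemma~\ref{maxspl(2,3^n)}: count the copies of $D_d$ and of $D_{p\pm1}$, show each $D_d$ lies in at most one $D_{p\pm1}$ via a normalizer-versus-maximality contradiction, and match the totals; it treats $A_4$ the same way, counting copies of $A_4$ against copies of $S_4$. You instead argue structurally: for $g$ of order $3$ or $5$ one has $C_G(g)=T$, the maximal torus of order $(p\pm1)/2$ containing $g$ (this does require $|g|>2$, which holds here --- involutions in $\PSL(2,p)$ have larger, dihedral centralizers --- and it is worth stating explicitly), whence $N_G(\grpgen{g})\leq N_G(C_G(g))=N_G(T)=D_{p\pm1}$ swallows the whole dihedral group; and $A_4\leq N_G(V_4)$, which by Dickson is $A_4$ or $S_4$ according to $p\bmod 8$. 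Your route is more direct and local: it produces the containment, and even its uniqueness (since $T=C_G(g)$ is canonical), without any global counting, whereas the paper's counting argument buys uniformity --- it reuses verbatim the machinery already set up for the $\PSL(2,2^n)$ and $\PSL(2,3^n)$ cases and delivers the ``contained in exactly one copy'' statements in the same form the tables consume. Both constitute complete proofs of the lemma.
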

    \begin{proof}
        By \cite[Corollary 2.2]{King}, every solvable subgroup must be contained in some $C_p \rtimes C_{\frac{p-1}{2}}$, $A_4$ or $S_4$ (depending on $p$ mod $8$), $D_{p\pm 1}$, or $A_5$ (in the case where $p \equiv \pm 1 \pmod {10}$).  If $p \not\equiv \pm 1 \pmod {10}$, there are no copies of $A_5$, so we are done.  
        \\ \\
        Therefore, we focus on the case where $p \equiv \pm 1 \pmod {10}$.
        \\ \\
        Specifically, we need to show that every solvable subgroup of a copy of $A_5$ is contained in some $C_p \rtimes C_{\frac{p-1}{2}}$, $A_4$ or $S_4$ (depending on $p$ mod $8$), $D_{p\pm 1}$, or $A_5$ (in the case that $p \equiv \pm 1 \pmod {10}$).  Note that any subgroup of a copy of $A_5$ whose order is not divisible by $5$ is contained in a copy of $S_3$, $A_4$, or $D_{10}$ since these are the maximal subgroups of $A_5$.  
        \\ \\
        For the $S_3$ case, since $S_3 \cong D_6$ and $3$ divides one of $p \pm 1$, by the same argument as  Lemma \ref{maxspl(2,3^n)}, every copy of $D_6$ is contained in a copy of $D_{p \pm 1}$ .
        \\ \\
        The $D_{10}$ case also follows by the same argument as Lemma \ref{maxspl(2,3^n)}, since $5$ divides either $p-1$ or $p+1$ when $p \equiv \pm 1 \pmod {10}$.
        \\ \\
        For the $A_4$ case, if $p \equiv \pm 3 \pmod 8$, then $A_4$ is maximal.  Otherwise, there are $\frac{q(q^2 - 1)}{24}$ copies of $S_4$ and the same number of copies of $A_4$.  Each copy of $S_4$ contains exactly one $A_4$ and each $A_4$ is in at most one copy of $S_4$ (since otherwise its normalizer will strictly contain $S_4$, but by maximality of $S_4$ this $A_4$ will be a normal subgroup, a contradiction). So each $A_4$ is contained in exactly one $S_4$. 
    \end{proof}

   \begin{proof}[Proof of Theorem \ref{solvpsl(2,p)}]
       We can see from Lemma \ref{maxpsl(2,p)} that the table for $\PSL(2, p)$ in the Appendix still holds even when $p \equiv \pm 1 \pmod  4$.
   \end{proof}

\section{$|\Solv(G)|$ for nonsolvable groups}\label{nonsolvablesection}
Using our results for the minimal simple groups, we can obtain a lower bound on $|\Solv(G)|$ for all nonsolvable groups.  We begin with a result for the non-abelian simple groups.
\begin{lemma}\label{simple32}
        For any non-abelian simple group $S$, $|\Solv(S)| \geq 32$.
    \end{lemma}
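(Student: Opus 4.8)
The plan is to reduce the claim for an arbitrary non-abelian simple group $S$ to the case of minimal simple groups, which have already been handled in Section \ref{minimalsimplegroups}. The key structural input is Theorem \ref{groupcontainingminimalsimplegroups} of Barry and Ward, which guarantees that every non-abelian simple group $S$ contains a subgroup $H$ that is itself a minimal simple group. Combined with the monotonicity established in Lemma \ref{sovH<SolvG}, this immediately gives $|\Solv(S)| \geq |\Solv(H)|$, so it suffices to bound $|\Solv(H)|$ from below over all minimal simple groups $H$.

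First I would invoke Theorem \ref{groupcontainingminimalsimplegroups} to fix such a minimal simple subgroup $H \leq S$. Then, applying Lemma \ref{sovH<SolvG} with this inclusion, I obtain the chain $|\Solv(S)| \geq |\Solv(H)|$. The remaining task is purely numerical: one must check that every value of $|\Solv(H)|$ appearing in our classification of minimal simple groups is at least $32$. Using Thompson's classification, the minimal simple groups are exactly $\PSL(2,2^p)$, $\PSL(2,3^p)$, $\PSL(2,p)$ with $p \equiv 2,3 \pmod 5$, $\Sz(2^p)$, and $\PSL(3,3)$, and we computed $|\Solv(H)|$ for each family in Theorems \ref{solvpsl(2,2^p)}--\ref{psl(2,p)} and Remark \ref{psl(3,3)}.

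The verification then splits by family. For $\PSL(2,2^p)$ the count is $2q^2$ with $q = 2^p \geq 4$, giving at least $32$, with equality precisely for $A_5 \cong \PSL(2,4)$ (so $q=4$ yields exactly $32$, matching $|\Solv(A_5)|=32$). Every other family produces strictly larger values: $\PSL(2,3^p)$ gives $\tfrac{4q^2-q+1}{2}$ with $q \geq 27$, $\Sz(2^p)$ gives a quartic in $q$ with $q \geq 8$, $\PSL(2,p)$ gives roughly $2p^2$ with $p \geq 13$ (the smallest admissible prime satisfying $p \equiv 2,3 \pmod 5$), and $\PSL(3,3)$ gives $1562$. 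In each case a direct substitution of the smallest admissible parameter confirms the bound comfortably.

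I expect the main (and in fact only) subtlety to be bookkeeping: one must be careful that the smallest group in the entire list is $A_5 = \PSL(2,4)$ with $|\Solv| = 32$ exactly, so the bound is sharp and cannot be improved. No genuine obstacle arises, since all the hard work — the explicit computation of $|\Solv(H)|$ for each minimal simple group — was already carried out in Section \ref{minimalsimplegroups}; this lemma is essentially the synthesis of those formulas with the two reduction tools, Theorem \ref{groupcontainingminimalsimplegroups} and Lemma \ref{sovH<SolvG}.
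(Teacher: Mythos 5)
Your proposal is correct and takes essentially the same route as the paper's proof: Theorem \ref{groupcontainingminimalsimplegroups} supplies a minimal simple subgroup $K \leq S$, Lemma \ref{sovH<SolvG} gives $|\Solv(S)| \geq |\Solv(K)|$, and the classification results of Section \ref{minimalsimplegroups} verify $|\Solv(K)| \geq 32$ family by family, with equality exactly at $A_5 \cong \PSL(2,4)$. One bookkeeping slip in your numerical check: the smallest admissible prime in the $\PSL(2,p)$ family is $p = 7$ (since $7 \equiv 2 \pmod 5$), not $p = 13$; the formulas of Theorem \ref{psl(2,p)} exclude $p = 7$, but the paper records $|\Solv(\PSL(2,7))| = 79$ by GAP, so the bound is unaffected.
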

    \begin{proof}
        Considering Theorem \ref{groupcontainingminimalsimplegroups}, $S$ contains a subgroup $K$ which is a minimal simple group.  It is also seen from Lemma \ref{sovH<SolvG} that  $|\Solv(S)| \geq |\Solv(K)|$.  Using the results on $|\Solv(K)|$ for the minimal simple groups $K$ found in Section \ref{minimalsimplegroups}, we can observe that $|\Solv(K)| \geq 32$ giving the desired bound.
    \end{proof}

\begin{lemma}\label{compositionseriesG}
    For any nonsolvable group $G$, there is a non-abelian simple group $H$ with $|\Solv(G)| \geq |\Solv(H)|$. 
\end{lemma}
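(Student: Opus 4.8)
The plan is to reduce the problem to a quotient whose socle exposes a genuine non-abelian simple subgroup, exploiting the fact that factoring out the solvable radical leaves $|\Solv|$ unchanged. Concretely, set $\bar{G} = G/R(G)$. If $R(G) = 1$ then $\bar{G} = G$; otherwise Lemma \ref{SolvG/R(G)} gives $|\Solv(G)| = |\Solv(\bar{G})|$. Either way $|\Solv(G)| = |\Solv(\bar{G})|$, so it suffices to produce a non-abelian simple $H$ with $|\Solv(\bar{G})| \geq |\Solv(H)|$.

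First I would record the two structural facts about $\bar{G}$ that make it tractable. It is again nonsolvable, since a solvable quotient $\bar G$ together with the solvable normal subgroup $R(G)$ would force $G$ itself to be solvable. More importantly, its solvable radical is trivial: $R(G/R(G)) = 1$ is the standard fact that $R(G)$ already absorbs every solvable normal subgroup (any solvable $K/R(G) \trianglelefteq \bar G$ pulls back to a solvable normal $K \trianglelefteq G$, hence $K \subseteq R(G)$). In particular $\bar{G}$ is nontrivial and has no nontrivial abelian normal subgroup.

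Next I would extract the simple subgroup. Choose a minimal normal subgroup $N$ of $\bar{G}$. Being characteristically simple, $N$ is a direct product of isomorphic simple groups; and because $R(\bar{G}) = 1$ rules out an abelian minimal normal subgroup, those factors must be non-abelian simple, say $N \cong S \times \cdots \times S$ with $S$ non-abelian simple. Then $S$ embeds as a subgroup of $N \leq \bar{G}$, and applying Lemma \ref{sovH<SolvG} to this subgroup yields $|\Solv(S)| \leq |\Solv(\bar{G})| = |\Solv(G)|$. Taking $H = S$ completes the argument (and, via Lemma \ref{simple32}, sets up the downstream bound $|\Solv(G)| \geq 32$).

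The hard part will not be the inequality chain but the conceptual point behind it: $G$ need not visibly contain any non-abelian simple subgroup, yet a mere composition factor cannot be fed into Lemma \ref{sovH<SolvG}, which requires an honest subgroup. The reduction modulo $R(G)$ is exactly what resolves this tension, since it both preserves $|\Solv|$ and produces a group whose minimal normal subgroups are products of non-abelian simple groups; one such factor is the subgroup $H$ we need. The remaining input is purely the standard structure theory of minimal normal subgroups in a group with trivial solvable radical.
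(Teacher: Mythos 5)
Your proof is correct, and it reaches the lemma by a genuinely different route than the paper. The paper works inside $G$: it takes a composition series $1 \unlhd N_1 \unlhd \cdots \unlhd N_n = G$, lets $H_i = N_i/N_{i-1}$ be the lowest nonsolvable composition factor, observes that $N_{i-1}$ is solvable (so that $N_{i-1} = R(N_i)$, since $N_i/N_{i-1}$ is simple and nonsolvable --- a step the paper leaves implicit), and then chains the same two lemmas you use, but in the opposite order: $|\Solv(H_i)| = |\Solv(N_i)| \leq |\Solv(G)|$ by Lemma \ref{SolvG/R(G)} applied to $N_i$ and Lemma \ref{sovH<SolvG} applied to $N_i \leq G$. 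So the paper goes subgroup-first, quotient-second (realizing $H$ as a quotient of a subgroup of $G$), while you go quotient-first, subgroup-second (realizing $H$ as a subgroup of the quotient $G/R(G)$), trading the composition series for the structure theory of minimal normal subgroups in a group with trivial solvable radical. Your supporting steps --- nonsolvability of $G/R(G)$, the identity $R(G/R(G)) = 1$, and the fact that a minimal normal subgroup of such a group is a direct product of non-abelian simple groups --- are all correctly justified, and you rightly handle $R(G) = 1$ separately, since Lemma \ref{SolvG/R(G)} is stated only for $R(G) \neq 1$. One remark worth adding to your write-up: your $S$ is in fact a composition factor of $G$ (being a composition factor of a minimal normal subgroup of $G/R(G)$), which is relevant because the theorem and conjecture following this lemma in the paper are phrased in terms of composition factors; the paper's construction delivers that property automatically, whereas yours yields it only after this extra observation.
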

\begin{proof}
Let $1 \unlhd N_1  \unlhd\cdots \unlhd N_n = G$ be the composition series of $G$ with the composition factors $H_1 , H_2 , \cdots, H_n$. Suppose that $H_i = N_i/N_{i-1}$ is the smallest nonsolvable composition factor of $G$. Since for any $j<i$, $H_j$ is solvable, for all $j<i$, $N_j$ is solvable. Then by Lemma \ref{sovH<SolvG} and Lemma \ref{SolvG/R(G)}, we have $$|\Solv(H_i)| = |\Solv(N_i)| \leq |\Solv(G)|,$$ which completes the proof.
\end{proof}

We now use Lemma \ref{compositionseriesG} to establish the following lower bound for all nonsolvable groups $G$.

\begin{theorem}
    For all nonsolvable groups $G$, we have $|\Solv(G)| \geq 32$.
\end{theorem}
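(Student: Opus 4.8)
The plan is to combine the two preparatory lemmas, since together they already reduce the statement to the simple-group case that has been settled. First I would apply Lemma \ref{compositionseriesG}: for the given nonsolvable group $G$ it produces a non-abelian simple group $H$ — concretely the smallest nonsolvable composition factor of $G$ — satisfying $|\Solv(G)| \geq |\Solv(H)|$. This is the step that transfers the problem from an arbitrary nonsolvable group to a simple group, and it rests on the monotonicity of $|\Solv(\cdot)|$ under passing to subgroups (Lemma \ref{sovH<SolvG}) together with the invariance of $|\Solv(\cdot)|$ under quotienting by the solvable radical (Lemma \ref{SolvG/R(G)}).

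Second, I would invoke Lemma \ref{simple32}, which asserts $|\Solv(H)| \geq 32$ for every non-abelian simple group $H$. Chaining the two inequalities then gives $|\Solv(G)| \geq |\Solv(H)| \geq 32$, which is exactly the claim. No further case analysis or computation is needed at this point, since both inequalities are provided verbatim by the cited lemmas.

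The main obstacle, in truth, has already been overcome before this statement is reached: the real content lives in Lemma \ref{simple32} — which in turn rests on Theorem \ref{groupcontainingminimalsimplegroups} (the embedding of a minimal simple group into any non-abelian simple group) and on the explicit computations of $|\Solv(K)|$ for the minimal simple groups $K$ carried out in Section \ref{minimalsimplegroups} — and in Lemma \ref{compositionseriesG}. Once those are in hand, the theorem is a one-line composition of inequalities, so I would expect no new difficulty here beyond correctly citing the prior results and confirming that the group $H$ furnished by Lemma \ref{compositionseriesG} is genuinely non-abelian, which holds because it is by construction a nonsolvable (hence non-abelian) composition factor.
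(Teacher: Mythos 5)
Your proposal is correct and follows exactly the paper's own proof: cite Lemma \ref{compositionseriesG} to obtain a non-abelian simple composition factor $H$ with $|\Solv(G)| \geq |\Solv(H)|$, then apply Lemma \ref{simple32} to get $|\Solv(H)| \geq 32$. Your added remark that $H$ is genuinely non-abelian simple (being a nonsolvable composition factor) is a correct, if routine, point of care that the paper leaves implicit.
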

\begin{proof}
    By Lemma \ref{simple32} and Lemma \ref{compositionseriesG}, we have $|\Solv(G)| \geq |\Solv(H)| \geq 32$ for some nonsolvable composition factor $H$ of $G$. 
\end{proof}
We also suggest the following conjecture.
\begin{conjecture}
    If $|\Solv(G)| = 32$, then there exists a composition factor of $G$ isomorphic to $A_5$.
\end{conjecture}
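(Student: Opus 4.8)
The plan is to reduce the conjecture to a statement about non-abelian simple groups and then attack that using the classification of finite simple groups (CFSG). First I would carry out the composition-factor reduction exactly as in Lemma \ref{compositionseriesG}: assume $|\Solv(G)| = 32$ and let $H$ be the smallest nonsolvable composition factor of $G$, which (by the proof of Lemma \ref{compositionseriesG}) is non-abelian simple and satisfies $|\Solv(H)| \le |\Solv(G)| = 32$. Lemma \ref{simple32} gives $|\Solv(H)| \ge 32$, so $|\Solv(H)| = 32$. Since $H$ is a composition factor of $G$, it then suffices to prove the sharper statement: the only non-abelian simple group $S$ with $|\Solv(S)| = 32$ is $A_5$.

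Next I would exploit the values computed in Section \ref{minimalsimplegroups}. Inspecting Theorems \ref{solvpsl(2,2^p)} -- \ref{psl(2,p)} and Remark \ref{psl(3,3)}, the smallest value of $|\Solv(K)|$ over all minimal simple groups $K$ is $32$, attained uniquely by $A_5 \cong \PSL(2,4)$, and the second smallest value is $79$. Now let $S$ be non-abelian simple with $|\Solv(S)| = 32$. By Theorem \ref{groupcontainingminimalsimplegroups}, $S$ contains a minimal simple subgroup $K$, and Lemma \ref{sovH<SolvG} yields $32 = |\Solv(S)| \ge |\Solv(K)| \ge 32$, forcing $|\Solv(K)| = 32$ and hence $K \cong A_5$. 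Moreover, if $S$ contained any minimal simple subgroup $K' \not\cong A_5$, the same monotonicity would give $|\Solv(S)| \ge |\Solv(K')| \ge 79 > 32$, a contradiction. Thus $A_5 \le S$ and \emph{every} minimal simple subgroup of $S$ is isomorphic to $A_5$; it remains to show $S = A_5$.

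The final step is to run through the families of the classification and show that every non-abelian simple group $S \ne A_5$ has $|\Solv(S)| > 32$. For the groups $\PSL(2,q)$ this is immediate from the explicit formulas of Theorems \ref{solvpsl(2,2^n)}, \ref{solvpsl(2,3^n)} and \ref{solvpsl(2,p)}, whose values all strictly exceed $32$ once $q \notin \{4,5\}$; in particular this disposes of all the $A_5$-containing groups $\PSL(2,p)$ with $p \equiv \pm 1 \pmod 5$, since Theorem \ref{solvpsl(2,p)} applies to every prime $p > 7$. For the remaining families one would seek a lower bound on $|\Solv(S)|$ that grows with $|S|$, thereby reducing the problem to finitely many small simple groups (for instance $A_6 \cong \PSL(2,9)$ and the $\PSL(2,q)$ with $q$ an even power of an odd prime) which can then be checked directly in GAP using the algorithm of Section \ref{GAPCalculation}.

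I expect the main obstacle to be precisely this last step: obtaining a uniform lower bound valid across all families. The constraint ``every minimal simple subgroup is $A_5$'' is strong but does not by itself bound $|S|$, so without CFSG there is no finite check, and even with it one must show, family by family, that $S$ either contains a minimal simple subgroup $K' \not\cong A_5$ (which gives $|\Solv(S)| \ge 79$ for free) or admits a direct count of distinct solvabilizers exceeding $32$. A positive answer to Question \ref{Sol(x)=Sol(y) implies same subgroups} is the natural tool here, since it would let us bound $|\Solv(S)|$ below by the number of cyclic subgroups of prime order in $S$, a quantity that grows with $|S|$ and would leave only a short finite list of candidate groups to rule out computationally.
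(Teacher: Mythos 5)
This statement is left as an open conjecture in the paper (Conjecture 1) --- there is no proof of it anywhere in the text --- so your attempt must stand on its own, and it does not: it is a correct reduction followed by an acknowledged, unclosed gap. The reduction is fine as far as it goes. Using Lemma \ref{compositionseriesG} (which rests on Lemmas \ref{sovH<SolvG} and \ref{SolvG/R(G)}) you correctly extract a non-abelian simple composition factor $H$ with $|\Solv(H)| \leq 32$, Lemma \ref{simple32} forces $|\Solv(H)| = 32$, and Theorem \ref{groupcontainingminimalsimplegroups} together with the values computed in Section \ref{minimalsimplegroups} ($32$ for $A_5$, $79$ for $\PSL(2,7)$, strictly larger for every other minimal simple group) correctly shows that every minimal simple subgroup of $H$ is isomorphic to $A_5$. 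You have thus correctly reduced the conjecture to the sharper claim that $A_5$ is the unique non-abelian simple group with exactly $32$ solvabilizers --- almost certainly the intended route, and a useful reformulation.

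The genuine gap is the final step, which you name but do not close, and which the paper's results cannot close for you. Theorems \ref{solvpsl(2,2^n)}, \ref{solvpsl(2,3^n)} and \ref{solvpsl(2,p)} cover only three slices of the family $\PSL(2,q)$: they say nothing about $\PSL(2,p^n)$ with $p$ odd and $n$ even or $p\geq 5$, $n>1$ (so even $A_6\cong\PSL(2,9)$ and $\PSL(2,25)$ are not settled by the stated theorems), nor about $A_n$ for $n\geq 7$, the other Lie-type families, or the sporadic groups. Worse, the constraint you derive does not reduce to a finite check: the class of simple groups all of whose minimal simple subgroups are copies of $A_5$ is infinite, since by Dickson's classification of subgroups of $\PSL(2,p)$ (cf.\ \cite{King}) every $\PSL(2,p)$ with $p\equiv\pm 1\pmod{10}$ has $A_5$ as its only non-abelian simple proper subgroup. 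So a lower bound on $|\Solv(S)|$ growing with $|S|$ is indispensable, and neither the paper nor your proposal supplies one. Your fallback tool --- a positive answer to Question \ref{Sol(x)=Sol(y) implies same subgroups} --- is itself open, and even granted it would not immediately yield one solvabilizer per cyclic subgroup of prime order: two distinct cyclic subgroups could a priori lie in exactly the same maximal solvable subgroups, which is precisely why Lemmas \ref{maximalpsl(2,2^p)}--\ref{maximalPSL(2,p)} must control intersections of maximal subgroups case by case. What you have, then, is a correct reduction plus a plausible research program, not a proof.
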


We finally present a relation between $|\Solv(G)|$ and the solvabilizer number of a group $G$ which was introduced in \cite{SolvNum} as the following: A solvabilizer covering of a group $G$ is a subset $X \subseteq G$ such that $G = \bigcup_{x\in X} \Sol_G(x)$. The solvabilizer number of $G$ denoted by $\alpha(G)$ is the cardinality of a minimal solvabilizer covering of $G$. 
\\ \\
The solvabilizer number of groups has been extensively studied in \cite{SolvNum}, where $\alpha(G)$ is computed for some finite groups. 
\begin{remark}\label{alphaandsol}
    Given nonsolvable group $G$, $\alpha(G)\leq |\Solv(G)|-1$.
\end{remark}

Remark \ref{alphaandsol} provides an upper bound for $\alpha(G)$, when $|\Solv(G)|$ is known.  In particular, we can obtain concrete upper bounds for $\alpha(G)$ for all minimal simple groups.  On the other hand, when a lower bound on $\alpha(G)$ is known this also gives a lower bound on $|\Solv(G)|$.

\section{Routine for Calculating $|\Solv(G)|$ in GAP}\label{GAPCalculation}

We calculate $|\Solv(G)|$ for various groups $G$ using code in \url{https://github.com/benjaminvakil/SizeofSolv} implementing an algorithm described below.
\\ \\
The benefit of this algorithm lies not only in its speed, but in its adaptability: the same strategies we describe here would still work if the problem is adapted to other subgroup-closed classes of finite groups such as abelian groups or nilpotent groups. Such questions could be new avenues of study.
\\ \\
Our computational results agree with the theoretical results we have in Sections \ref{minimalsimplegroups} and \ref{othersimplegroups}.  

\subsection{General Strategy}

One can easily describe a ``naïve'' algorithm that calculates $|\Solv(G)|$: for all $x\in G$ simply calculate $\Sol_{G}(x)$ by checking if $\langle x,y\rangle$ is solvable for every $y\in G$; then compare each two solvabilizers for equality. This method is very obviously inefficient as an effective algorithm should, one, minimize the number of elements we need to calculate the solvabilizer of, and thus, two, minimize the number of comparisons between solvabilizers we need to make. This brings us to an important characterization of $\Solv(G)$:

\begin{theorem}\label{formula for |Solv(G)|}
    Let $x_{1},\ldots,x_{n}\in G$ be representatives of the different rational classes where a rational class $R\subseteq G$ with representative $x$ is defined to be the union of all conjugacy classes $(x^{i})^{G}$ where $\gcd(|x|,i)=1$. Then,
    $$|\Solv(G)|=\sum_{x\in \chi}[G:N_{G}(\Sol_{G}(x))]$$
    where $\chi\subseteq \{x_{1},\ldots,x_{n}\}$ is the largest subset such that for any $x_{i},x_{j}\in \chi$, $\Sol_{G}(x_{i})\neq \Sol_{G}(x_{j})^{g}$ for all $g\in G$.

    \begin{proof}
      First note that given elements $x, y\in G$, $\Sol_{G}(x^{g})=\Sol_{G}(x)^{g}$ and $\Sol_{G}(x^{i})=\Sol_G(x)$ where $\gcd(|x|,i)=1$ which follow immediately from the fact that $\langle x, y\rangle^g = \langle x^g, y^g \rangle$ and $\langle x^i, y \rangle = \langle x, y \rangle$ for any $y \in G$.
      \\ \\
      Then, for any rational class $R$ with representative $x$, using the characterization that any $y\in R$  is of the form $(x^{i})^{g}$ where $\gcd(|x|,i)=1$ we have $$\{\Sol_{G}(y):y\in R\}=\{\Sol_{G}(x)^{g}:g\in G\}.$$  If $X\subseteq G$ is a subset (not necessarily a subgroup), then $X^{g_{1}}=X^{g_{2}}$ if and only if $g_{1},g_{2}\in G$ are in the same right coset of $N_{G}(X)$ and thus $|\{X^{g}:g\in G\}|=[G:N_{G}(X)]$. Specializing $X=\Sol_{G}(x)$, we get $$|\{\Sol_{G}(y):y\in R\}|=[G:N_{G}(\Sol_{G}(x))].$$

        Now, suppose $x,y\in G$ are in different rational classes and $\Sol_{G}(x)=\Sol_{G}(y)$. Say, $x\in R_{i}$ a rational class with representative $x_{i}$ and $y\in R_{j}$ a rational class with representative $x_{j}$. Then every other element of $\{\Sol_{G}(y):y\in R\}$ can be written as $\Sol_G(y)^g = \Sol_G(x)^g$ for some $g \in G$.  But $\Sol_G(x)^g$ is already counted in $[G:N_{G}(\Sol_{G}(x))]$, so restricting to the subset $\chi$ gives the correct count for $|\Solv(G)|$.
    \end{proof}

\end{theorem}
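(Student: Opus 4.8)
The plan is to recognize $\Solv(G)$ as a $G$-set under conjugation and to count it orbit by orbit, using the rational classes as a bookkeeping device for enumerating the orbits. First I would record the two structural identities that drive everything: for every $y\in G$ one has $\langle x,y\rangle^g=\langle x^g,y^g\rangle$ and $\langle x^i,y\rangle=\langle x,y\rangle$ whenever $\gcd(|x|,i)=1$. Passing to solvabilizers, these upgrade to $\Sol_G(x^g)=\Sol_G(x)^g$ and $\Sol_G(x^i)=\Sol_G(x)$. The first of these says that conjugation $S\mapsto S^g$ carries solvabilizers to solvabilizers, so $G$ acts on the set $\Solv(G)$ and $\Solv(G)$ decomposes as a disjoint union of $G$-orbits.

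Next I would count a single orbit. For fixed $x$, the stabilizer of $S=\Sol_G(x)$ under this conjugation action is exactly $N_G(S)$, so by orbit--stabilizer the distinct conjugates form a set of size $[G:N_G(S)]$; concretely $S^{g_1}=S^{g_2}$ iff $g_1$ and $g_2$ lie in the same coset of $N_G(S)$. Combining this with the two identities, I would show that a \emph{single} rational class already realizes a full orbit: if $R$ is the rational class of $x$, then every $y\in R$ has the form $(x^i)^g$ with $\gcd(|x|,i)=1$, whence $\Sol_G(y)=\Sol_G(x)^g$, and conversely each conjugate $\Sol_G(x)^g=\Sol_G(x^g)$ is attained. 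Thus $\{\Sol_G(y):y\in R\}=\{\Sol_G(x)^g:g\in G\}$, a set of cardinality $[G:N_G(\Sol_G(x))]$.

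Finally I would assemble the global count. Every element of $\Solv(G)$ equals $\Sol_G(z)$ for some $z\in G$, and $z$ lies in the rational class of some representative $x_k$, so the $G$-orbit of $\Sol_G(z)$ coincides with that of $\Sol_G(x_k)$. Hence the orbits of $\Solv(G)$ are precisely those of $\{\Sol_G(x_1),\ldots,\Sol_G(x_n)\}$, possibly with repetitions. The subset $\chi$ is, by construction, a maximal collection of rational-class representatives whose solvabilizers are pairwise non-conjugate, i.e. exactly one representative per orbit; in particular the resulting sum is independent of which maximal $\chi$ one picks, since it equals the total number of orbits weighted by their sizes. As distinct orbits are disjoint and the orbit through $\Sol_G(x)$ has size $[G:N_G(\Sol_G(x))]$, summing over $\chi$ gives $|\Solv(G)|=\sum_{x\in\chi}[G:N_G(\Sol_G(x))]$.

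The main obstacle is controlling the coincidences \emph{across} rational classes: two different rational classes can produce solvabilizers that are conjugate, or even literally equal, so naively summing $[G:N_G(\Sol_G(x_k))]$ over all $n$ classes overcounts. The substantive content is therefore the verification that restricting to $\chi$ removes exactly this redundancy, which amounts to the claim that the $G$-conjugation orbits on $\Solv(G)$ are in bijection with $\chi$; this in turn rests on showing that every orbit meets $\{\Sol_G(x_k)\}_k$ and that $\chi$ selects one representative from each. A minor care point is to use $N_G(\Sol_G(x))$ rather than $C_G(x)$ or $N_G(\langle x\rangle)$ as the stabilizer, since it is the normalizer of the solvabilizer \emph{as a subset}, not any centralizer of $x$, that governs the orbit size.
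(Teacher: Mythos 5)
Your proposal is correct and takes essentially the same approach as the paper: the same two identities $\Sol_{G}(x^{g})=\Sol_{G}(x)^{g}$ and $\Sol_{G}(x^{i})=\Sol_{G}(x)$ for $\gcd(|x|,i)=1$, the same identification of $\{\Sol_{G}(y):y\in R\}$ with the full set of conjugates of a single solvabilizer counted by $[G:N_{G}(\Sol_{G}(x))]$ (the paper's coset computation is precisely your orbit--stabilizer step), and the same use of $\chi$ to select one representative per conjugacy class of solvabilizers. If anything, your explicit orbit decomposition of $\Solv(G)$ as a $G$-set treats the cross-class redundancies (solvabilizers that are conjugate but not equal) slightly more cleanly than the paper's final paragraph, which phrases the argument only for the case $\Sol_{G}(x)=\Sol_{G}(y)$, though the definition of $\chi$ covers conjugacy in both treatments.
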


\subsection{Calculating $\Sol_{G}(x)$}

As mentioned in Section \ref{minimalsimplegroups}, for an element $x\in G$, $\Sol_{G}(x)$ as a union of all maximal solvable subgroups containing $x$. In general, there will be a large number of maximal subgroups (and thus maximal solvable subgroups); in fact, \cites[Chapter 39.19-8]{GAP} warns against calling the GAP command \texttt{MaximalSubgroups}, due to the amount of space it takes. Instead, we want to compute conjugacy class representatives of maximal solvable subgroups, from which we can access all maximal solvable subgroups. GAP does not have a command that does this natively, but we can write a command, \texttt{MaxSolvSubgroupsReps}, recursively that does this by indexing through $\texttt{H}\in\texttt{MaximalSubgroupClassReps(G)}$ and saving \texttt{H} to a list if it is solvable and again calling $\texttt{MaxSolvSubgroupsReps(H)}$ if it is not. We also have to be careful to remove redundancies as that inefficiency can greatly slow down later steps. Specifically, we do not want to include both $H$ and $H^{g}$ or $H$ and $K$ such that $K^{g}\leqslant H$ in our list.
\\ \\
Once we have found the maximal solvable subgroup class representatives, we may find $\Sol_{G}(x)$ for certain $x$—in our case $x_{2},x_{3},\ldots,x_{n}$ the rational class representatives removed $x_{1}=e$ as $\Sol_{G}(e)=G$ is trivial. To do this, for each maximal solvable subgroup class representative $H$, we test for which $x\in H^{g}$ subgroups conjugate to $H$ which we deposit in the local variable \texttt{subgrpsinunion}; equivalently, we test for $x^{g^{-1}}\in H$ to avoid having to create the group $H^{g}$ if not necessary. Then, the algorithm goes (using GAP pseudo-code for illustration):

\vspace{.3cm}

\begin{tcolorbox}[colback=black!5!white,colframe=black,breakable,enhanced jigsaw]
\texttt{> if RemInt($|H|$, $|x|$)=0 then}\\
\texttt{> \hspace{.3in}for $g\in$ RightTransversal($G$, $N_{G}(H)$) do}\\
\texttt{> \hspace{.6in}if $x^{g^{-1}}\in H$ then}\\
\texttt{> \hspace{.9in}Add(subgrpsinunion, $H^{g}$);}\\
\texttt{> \hspace{.6in}fi;}\\
\texttt{> \hspace{.3in}od;}\\
\texttt{> fi;}
\end{tcolorbox}
\vspace{.3 cm}
\subsection{Calculating the Normalizer of a Solvabilizer}

\begin{theorem}\label{normalizer of a solvabilizer}
    $N_{G}(\langle x\rangle)\leqslant N_{G}(\Sol_{G}(x))$.
\end{theorem}

\begin{proof}
    Let $y\in \Sol_{G}(x)$ and $h\in N_{G}(\langle x\rangle)$. So, $\langle x,y\rangle\cong \langle x,y\rangle^{h}$ is solvable. Moreover, $\langle x,y\rangle^{h}=\langle x^{h},y^{h}\rangle=\langle x^{i},y^{h}\rangle$ for some $i$ coprime to $|x|$ since $h$ normalizes $\langle x\rangle$. As $\gcd(|x|,i)=1$, $y^{h}\in \Sol_{G}(x^{i})=\Sol_{G}(x)$ and thus $h$ normalizes $\Sol_{G}(x)$. Therefore, $N_{G}(\langle x\rangle)\leqslant N_{G}(\Sol_{G}(x))$, by taking this fact over all $h\in N_{G}(\langle x\rangle)$.
\end{proof}

\begin{corollary}
    Summing over all representatives $x_{1},\ldots,x_{n}$ of rational classes in $G$, we have an upper bound:
    $$|\Solv(G)|\leqslant \sum_{i=1}^{n}[G:N_{G}(\langle x_i\rangle)]$$
\end{corollary}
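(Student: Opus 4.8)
The plan is to combine the two results established immediately above, namely Theorem~\ref{formula for |Solv(G)|} and Theorem~\ref{normalizer of a solvabilizer}, and then sum over all rational class representatives. Recall that Theorem~\ref{formula for |Solv(G)|} gives the exact count
$$|\Solv(G)|=\sum_{x\in \chi}[G:N_{G}(\Sol_{G}(x))],$$
where $\chi\subseteq\{x_{1},\ldots,x_{n}\}$ is a subset of the rational class representatives. The strategy is to relate this sum to the full sum over all of $x_{1},\ldots,x_{n}$ and then to replace each index $[G:N_{G}(\Sol_{G}(x_i))]$ by the larger index $[G:N_{G}(\langle x_i\rangle)]$.

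First I would observe that since $\chi$ is a subset of $\{x_{1},\ldots,x_{n}\}$ and each summand $[G:N_{G}(\Sol_{G}(x))]$ is a positive integer, passing from the sum over $\chi$ to the sum over the whole set of representatives can only increase (or leave unchanged) the total; that is,
$$|\Solv(G)|=\sum_{x\in \chi}[G:N_{G}(\Sol_{G}(x))]\leqslant \sum_{i=1}^{n}[G:N_{G}(\Sol_{G}(x_i))].$$
Next I would apply Theorem~\ref{normalizer of a solvabilizer} term by term. That theorem states $N_{G}(\langle x_i\rangle)\leqslant N_{G}(\Sol_{G}(x_i))$; since a larger subgroup has smaller index in $G$, this containment yields $[G:N_{G}(\Sol_{G}(x_i))]\leqslant[G:N_{G}(\langle x_i\rangle)]$ for each $i$. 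Summing these inequalities over $i=1,\ldots,n$ and chaining with the previous display gives exactly the claimed bound
$$|\Solv(G)|\leqslant \sum_{i=1}^{n}[G:N_{G}(\langle x_i\rangle)].$$

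There is essentially no hard step here, since the corollary is a direct bookkeeping consequence of the two preceding theorems; the only points deserving a word of care are the two elementary facts being invoked, namely that dropping to a subset of nonnegative summands decreases the sum and that subgroup containment reverses the ordering of indices. Both are immediate from Lagrange's theorem. If anything, the one subtlety worth flagging is making sure that Theorem~\ref{formula for |Solv(G)|} is being read correctly: its summands are already the orbit sizes $[G:N_G(\Sol_G(x))]$ under conjugation, so the passage to the full index $[G:N_G(\langle x\rangle)]$ is genuinely an inequality (the normalizer of the cyclic group can be strictly smaller than the normalizer of the solvabilizer), which is why this produces a computationally useful \emph{upper} bound rather than an equality.
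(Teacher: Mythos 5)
Your proposal is correct and is exactly the paper's argument: the paper proves this corollary by citing the same two results, Theorem~\ref{formula for |Solv(G)|} and Theorem~\ref{normalizer of a solvabilizer}, and your write-up simply fills in the two elementary steps (enlarging the sum from $\chi$ to all representatives, and reversing the index inequality via the normalizer containment) that the paper leaves implicit. No gaps; nothing further needed.
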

\begin{proof}
    This follows immediately from Theorem \ref{formula for |Solv(G)|} and Theorem \ref{normalizer of a solvabilizer}.
\end{proof}

In the algorithm, we use Theorem \ref{normalizer of a solvabilizer} to speed up calculating the normalizer of $\Sol_G(x)$.  Instead of iterating through all elements $g \in G$ and checking if $\Sol_G(x) = \Sol_G(x)^g$, it suffices to iterate through one representative for each coset of $N_G(x)$ in $H$.  This is because if $g_1$ and $g_2$  are in the same coset of $N_G(\langle x \rangle)$, we see from Theorem \ref{normalizer of a solvabilizer} that $\Sol_G(x)^{g_1} = \Sol_G(x)^{g_2}$.

\subsection{Computing $|\Solv(G)|$}
After speeding up the calculation of $N_G(\Sol_G(x))$, it remains to determine the set $\chi$ in Theorem \ref{formula for |Solv(G)|}, but the number of elements we must check is greatly diminished. Consider two representatives $x_i$ and $x_j$ of different rational classes, we determine whether or not two elements of the respective rational classes have the same solvabilizers. This relates to Question \ref{Sol(x)=Sol(y) implies same subgroups}, and this step could be mostely forgone if we \textit{a priori} knew whether our group $G$ satisfies that Question. Notably, even though we do have to index over pairs of elements in $G$ for this, which is time-consuming, the number of elements we now need to do so is much smaller than the naive algorithm, as there are generally far fewer rational classes than elements.
\\ \\
Now, this is equivalent to determining if $\Sol_G(x_i) = \Sol_G(x_j)^{g_0}$ for some $g_0 \in G$. To find such a $g_0$, we first check that $|\Sol_G(x_i)| = |\Sol_G(x_j)|$.  Then, note that if $\Sol_G(x_i) = \Sol_G(x_j)^{g_0}$, then $N_G(\Sol_G(x_i)) = N_G(\Sol_G(x_j))^{g_0}$, so we search for a $g \in G$ satisfying $N_G(\Sol_G(x_i)) = N_G(\Sol_G(x_j))^{g}$ using the built-in function \texttt{RepresentativeAction} in GAP.
\\ \\
Then $g$ and $g_0$ must be in the same coset of the normalizer of $N_G(\Sol_G(x_j))$.  We iterate over every element $h \in H = N_G(N_G(\Sol_G(x_j))$, and check if $\Sol_G(x_i) = \Sol_G(x_j)^{gh}$ by simply conjugating every element of the solvabilizer by $gh$.  We know that if $g_0$ exists, it must be in the same coset of $H$ as $g$.  So through this process, we are guaranteed find $g_0$ if it exists.  
\\ \\
Finally, in the case that $g_0$ exists, we then delete one of the $x_i$ or $x_j$ from our list of representatives of rational classes.  Repeating this process for all possible pairs of $x_i, x_j$ gives the desired set $\chi$, completing the algorithm.
\newpage
\section{Appendix: Tables from \cite{tables}}
\vspace{.5cm}
These are tables of data about the solvabilizer, which were proven in section 3 of \cite{tables}.
\vspace{1cm}

\begin{center}
\small\addtolength{\tabcolsep}{-1pt}
\begin{table}[h!]\begin{center}
    \begin{tabular}{ |c |  c|  c|  c|} 
     \hline
     \quad Maximal Subgroup & $|x| = 2$ & $|x| \mid q-1$ &  $|x| \mid q+1$   \\ [0.8ex] 
     \hline\hline
     $C_2^p \rtimes C_{q-1}$  & 1  & 2  & 0   \\ 
     \hline
     $D_{2(q-1)}$  & $q/2$ & 1 & 0  \\
     \hline
     $D_{2(q+1)}$  & $q/2$ & 0 & 1   \\
     \hline\hline
     $|\Sol_G(x)|$ & $3q(q-1)$ & $2q(q-1)$ & $2(q+1)$ \\
     \hline
    \end{tabular}\vspace{.5cm}
    \caption{Classification of $\Sol_{G}(x)$ for $G={\rm PSL}(2, 2^p)$}
    \label{tbl2,2p}
    \end{center}
\vspace{1cm}
\begin{center}
\begin{tabular}{| c | c | c | c | c|} 
 \hline
 \quad Maximal Subgroup &  $|x| = 2$ & $|x| = 3$ & $|x| \mid q-1$ & $|x| \mid q+1$  \\ [0.5ex] 
 \hline\hline
 $C_3^p \rtimes C_{(q-1)/2}$ & 0 & 1 & 2 & 0  \\ 
 \hline
 $D_{q-1}$ & $(q+1)/2$& 0 & 1 & 0  \\
 \hline
 $D_{q+1}$ & $(q+3)/2$ & 0 & 0 & 1  \\
 \hline
 $\rm{A}_4$ & $(q+1)/4$ & $q/3$ & 0 & 0 \\
 \hline \hline
 $|\Sol_G(x)|$ & $q(q+1)$ & $q(q+5)/2$ & $q(q-1)$ & $q+1$ \\
 \hline
\end{tabular}\vspace{.5cm}
    \caption{Classification of $\Sol_{G}(x)$ for  $G={\rm PSL}(2,3^p)$}
    \label{tbl2,3p}
    \end{center}
\vspace{1cm}
\begin{center}
\begin{tabular}{| c || c | c | c | c | c |} 
 \hline
 \quad Maximal Subgroup & $|x| = 2$ & $|x| = 4$ & $|x| \mid q-1$ & $|x| \mid q_+$ & $|x|\mid q_-$  \\ [0.5ex] 
 \hline\hline
 $C_2^p .C_2^p) \rtimes C_{q-1}$ & 1 & 1 & 2 & 0 & 0 \\ 
 \hline
 $D_{2(q-1)}$ & $q^2/2$ & 0 & 1 & 0 & 0 \\
 \hline
 $C_{q_+}\rtimes C_4$ & $q^2/4$ & $q/2$ & 0 & 1 & 0 \\
 \hline
 $C_{q_-}\rtimes C_4$ & $q^2/4$ & $q/2$ & 0 & 0 & 1 \\
 \hline 
 $|\Sol_G(x)|$ & $q^2(4q-3)$ & $q^2(q+3)$ & $2q^2(q-1)$ & $4q_+$ & $4q_-$\\
 \hline
\end{tabular}\vspace{.5cm}
\caption{Classification of $\Sol_{G}(x)$ for $G=\text{Sz}(2^p)$}
    \label{tblsz,2p}
    \end{center}
\end{table}
\end{center}

\begin{center}
\small\addtolength{\tabcolsep}{-2pt}
\begin{table}[h!]\begin{center}
\begin{tabular}{| c || c | c | c | c | c | c |} 
 \hline
 \quad Maximal Subgroup & $|x| = 2$ & $|x| = 3$ & $|x| = 4 $ & $|x| = p$ & $|x| \mid p-1$ & $|x|\mid p+1$  \\ [0.5ex] 
 \hline\hline
 $\cp$ & 2 & 2 & 2 & 1 & 2 & 0 \\ 
 \hline
 $D_{p-1}$ & $(p+1)/2$ & 1 & 1 & 0 & 1 & 0 \\
 \hline
 $D_{p+1}$ & $(p-1)/2$ & 0 & 0 & 0 & 0 & 1 \\
 \hline
 $\rm{S}_4$ & $3(p-1)/4$ & $(p-1)/3$ & $(p-1)/4$ & 0 & 0 & 0 \\
 \hline \hline
 $|\Sol_{G}(x)|$ & $(p-1)(2p+3)$ & $(p-1)(p+6)$ & $(p-1)(p+4)$ & $p(p-1)/2$ & $p(p-1)$& $p+1$\\
 \hline
\end{tabular}
\vspace{.25cm}
\caption{Classification of $\Sol_{G}(x)$ for $G=\PSL(2, p)$ when $p\equiv 1\pmod{24}$}
    \label{tbl2,p,1}
     \end{center}
\vspace{.75cm}
\begin{center}
\begin{tabular}{| c || c | c | c | c | c |} 
 \hline
 \quad Maximal Subgroup & $|x| = 2$ & $|x| = 3$ & $|x| = p$ & $|x| \mid p-1$ & $|x|\mid p+1$  \\ [.5ex] 
 \hline\hline
 $\cp$ & 2 & 0 & 1 & 2 & 0 \\ 
 \hline
 $D_{p-1}$ & $(p+1)/2$ & 0 & 0 & 1 & 0 \\
 \hline
 $D_{p+1}$ & $(p-1)/2$ & 1 & 0 & 0 & 1 \\
 \hline
 $\rm{A}_4$ & $(p-1)/4$ & $(p+1)/3$ & 0 & 0 & 0 \\
 \hline \hline
 $|\Sol_{G}(x)|$ & $(p-1)(2p-1)$ & $4(p+1)$ & $p(p-1)/2$ & $p(p-1)$& $p+1$\\
 \hline
\end{tabular}
\vspace{.25cm}
\caption{Classification of $\Sol_{G}(x)$ for $G=\PSL(2, p)$ when $p\equiv 5\pmod{24}$}
    \label{tbl2,p,5}
    \end{center}
\vspace{.75cm}
\begin{center}
\begin{tabular}{| c || c | c | c | c | c | c |} 
 \hline
 \quad Maximal Subgroup & $|x| = 2$ & $|x| = 3$ & $|x| = 4 $ & $|x| = p$ & $|x| \mid p-1$ & $|x|\mid p+1$  \\ [0.5ex] 
 \hline\hline
 $\cp$ & 0 & 2 & 0 & 1 & 2 & 0 \\ 
 \hline
 $D_{p-1}$ & $(p+1)/2$ & 1 & 0 & 0 & 1 & 0 \\
 \hline
 $D_{p+1}$ & $(p+3)/2$ & 0 & 1 & 0 & 0 & 1 \\
 \hline
 $\rm{S}_4$ & $3(p+1)/4$ & $(p-1)/3$ & $(p+1)/4$ & 0 & 0 & 0 \\
 \hline \hline
 $|\Sol_{G}(x)|$ & $(p+1)(p+4)$ & $(p-1)(p+6)$ & $5(p+1)$ & $p(p-1)/2$ & $p(p-1)$& $p+1$\\
 \hline
\end{tabular}\vspace{.25cm}
\caption{Classification of $\Sol_{G}(x)$ for $G=\PSL(2, p)$ when $p\equiv 7\pmod{24}$}
    \label{tbl2,p,7}
\end{center}
\vspace{.75cm}
\begin{center}
\begin{tabular}{| c || c | c | c | c | c |} 
 \hline
 \quad Maximal Subgroup & $|x| = 2$ & $|x| = 3$ & $|x| = p$ & $|x| \mid p-1$ & $|x|\mid p+1$  \\ [0.5ex] 
 \hline\hline
 $\cp$ & 0 & 0 & 1 & 2 & 0 \\ 
 \hline
 $D_{p-1}$ & $(p+1)/2$ & 0 & 0 & 1 & 0 \\
 \hline
 $D_{p+1}$ & $(p+3)/2$ & 1 & 0 & 0 & 1 \\
 \hline
 $\rm{A}_4$ & $(p+1)/4$ & $(p+1)/3$ & 0 & 0 & 0 \\
 \hline \hline
 $|\Sol_{G}(x)|$ & $p(p+1)$ & $4(p+1)$ & $p(p-1)/2$ & $p(p-1)$& $p+1$\\
 \hline
\end{tabular}\vspace{.25cm}
\caption{Classification of $\Sol_{G}(x)$ for $G=\PSL(2, p)$ when $p\equiv 11\pmod{24}$}
    \label{tbl2,p,11}
\end{center}

\vspace{.75cm}
\end{table}
\begin{table}[h!]\begin{center}
\begin{tabular}{| c || c | c | c | c | c |} 
 \hline
 \quad Maximal Subgroup & $|x| = 2$ & $|x| = 3$ & $|x| = p$ & $|x| \mid p-1$ & $|x|\mid p+1$  \\ [0.5ex] 
 \hline\hline
 $\cp$ & 2 & 2 & 1 & 2 & 0 \\ 
 \hline
 $D_{p-1}$ & $(p+1)/2$ & 1 & 0 & 1 & 0 \\
 \hline
 $D_{p+1}$ & $(p-1)/2$ & 0 & 0 & 0 & 1 \\
 \hline
 $\rm{A}_4$ & $(p-1)/4$ & $(p-1)/3$ & 0 & 0 & 0 \\
 \hline \hline
 $|\Sol_{G}(x)|$ & $(p-1)(2p-1)$ & $(p-1)(p+3)$ & $p(p-1)/2$ & $p(p-1)$& $p+1$\\
 \hline
\end{tabular}
\vspace{.25cm}
\caption{Classification of $\Sol_{G}(x)$ for $G=\PSL(2, p)$ when $p\equiv 13\pmod{24}$}
    \label{tbl2,p,13}
    \end{center}
\vspace{.75cm}

\begin{tabular}{| c || c | c | c | c | c | c |} 
 \hline
 \quad Maximal Subgroup & $|x| = 2$ & $|x| = 3$ & $|x| = 4 $ & $|x| = p$ & $|x| \mid p-1$ & $|x|\mid p+1$  \\ [0.5ex] 
 \hline\hline
 $\cp$ & 2 & 0 & 2 & 1 & 2 & 0 \\ 
 \hline
 $D_{p-1}$ & $(p+1)/2$ & 0 & 1 & 0 & 1 & 0 \\
 \hline
 $D_{p+1}$ & $(p-1)/2$ & 1 & 0 & 0 & 0 & 1 \\
 \hline
 $\rm{S}_4$ & $3(p-1)/4$ & $(p+1)/3$ & $(p-1)/4$ & 0 & 0 & 0 \\
 \hline \hline
 $|\Sol_{G}(x)|$ & $(p-1)(2p+3)$ & $7(p+1)$ & $(p-1)(p+4)$ & $p(p-1)/2$ & $p(p-1)$& $p+1$\\
 \hline
\end{tabular}\vspace{.25cm}
\caption{Classification of $\Sol_{G}(x)$ for $G=\PSL(2, p)$ when $p\equiv 17\pmod{24}$}
    \label{tbl2,p,17}
\vspace{.75cm}
\begin{center}
\begin{tabular}{| c || c | c | c | c | c |} 
 \hline
 \quad Maximal Subgroup & $|x| = 2$ & $|x| = 3$ & $|x| = p$ & $|x| \mid p-1$ & $|x|\mid p+1$  \\ [0.5ex] 
 \hline\hline
 $\cp$ & 0 & 2 & 1 & 2 & 0 \\ 
 \hline
 $D_{p-1}$ & $(p+1)/2$ & 1 & 0 & 1 & 0 \\
 \hline
 $D_{p+1}$ & $(p+3)/2$ & 0 & 0 & 0 & 1 \\
 \hline
 $\rm{A}_4$ & $(p+1)/4$ & $(p-1)/3$ & 0 & 0 & 0 \\
 \hline \hline
 $|\Sol_{G}(x)|$ & $p(p+1)$ & $(p-1)(p+3)$ & $p(p-1)/2$ & $p(p-1)$& $p+1$\\
 \hline
\end{tabular}
\vspace{.25cm}
\caption{Classification of $\Sol_{G}(x)$ for $G=\PSL(2, p)$ when $p\equiv 19\pmod{24}$}
    \label{tbl2,p,19}
    \end{center}
\vspace{.75cm}
\begin{center}
\begin{tabular}{| c || c | c | c | c | c | c |} 
 \hline
 \quad Maximal Subgroup & $|x| = 2$ & $|x| = 3$ & $|x| = 4 $ & $|x| = p$ & $|x| \mid p-1$ & $|x|\mid p+1$  \\ [0.5ex] 
 \hline\hline
 $\cp$ & 0 & 0 & 0 & 1 & 2 & 0 \\ 
 \hline
 $D_{p-1}$ & $(p+1)/2$ & 0 & 0 & 0 & 1 & 0 \\
 \hline
 $D_{p+1}$ & $(p+3)/2$ & 1 & 1 & 0 & 0 & 1 \\
 \hline
 $\rm{S}_4$ & $3(p+1)/4$ & $(p+1)/3$ & $(p+1)/4$ & 0 & 0 & 0 \\
 \hline \hline
 $|\Sol_{G}(x)|$ & $(p+1)(p+4)$ & $7(p+1)$ & $5(p+1)$ & $p(p-1)/2$ & $p(p-1)$& $p+1$\\
 \hline
\end{tabular}
\vspace{.25cm}
\caption{Classification of $\Sol_{G}(x)$ for $G=\PSL(2, p)$ when $p\equiv 23\pmod{24}$}
    \label{tbl2,p,23}
    \end{center}
\end{table}
\end{center}

\nocite{*}
\printbibliography

\end{document}